\newtheorem{lem}{Lemma}[section]
\newtheorem{defi}[lem]{Definition}
\newtheorem{theo}[lem]{Theorem}
\newtheorem{cor}[lem]{Corollary}
\newtheorem{rem}[lem]{Remark}
\newtheorem{genass}[lem]{General Assumption}
\newtheorem{ex}[lem]{Example}
\newcommand{\ol}{\overline}
\def\To{\to}
\def\cpt{\hookrightarrow}
\def\equi{\Leftrightarrow}
\def\qequi{\quad\equi\quad}
\def\z{\mathbb{Z}}
\def\n{\mathbb{N}}
\def\nz{\n_{0}}
\def\reals{\mathbb{R}}
\def\rt{\reals^{3}}
\def\rttt{\reals^{3\times3}}
\def\om{\Omega}
\def\ga{\Gamma}
\def\RM{\boldsymbol{\mathsf{RM}}}
\def\bbS{\mathbb{S}}
\def\bbT{\mathbb{T}}
\def\sfL{\mathsf{L}}
\def\bsfL{\boldsymbol{\mathsf{L}}}
\def\bcalL{\boldsymbol{\mathcal{L}}}
\def\sfH{\mathsf{H}}
\def\sfV{\mathsf{V}}
\def\bsfH{\boldsymbol{\mathsf{H}}}
\def\bcalH{\boldsymbol{\mathcal{H}}}
\def\sfC{\mathsf{C}}
\def\bsfC{\boldsymbol{\mathsf{C}}}
\def\bcalC{\boldsymbol{\mathcal{C}}}
\newcommand{\vHarm}[2]{\boldsymbol{\mathsf{Harm}}{}^{#1}_{\mathsf{#2}}}
\newcommand{\tHarm}[2]{\boldsymbol{\mathcal{H}\mathsf{arm}}{}^{#1}_{\mathsf{#2}}}
\renewcommand{\L}[2]{\sfL{}^{#1}_{#2}}
\newcommand{\vL}[2]{\bsfL{}^{#1}_{#2}}
\newcommand{\tL}[2]{\bcalL{}^{#1}_{#2}}
\renewcommand{\H}[2]{\sfH{}^{#1}_{#2}}
\newcommand{\Hc}[2]{\mathring{\sfH}{}^{#1}_{#2}}
\newcommand{\vH}[2]{\bsfH{}^{#1}_{#2}}
\newcommand{\vHc}[2]{\mathring{\bsfH}{}^{#1}_{#2}}
\newcommand{\tH}[2]{\bcalH{}^{#1}_{#2}}
\newcommand{\tHc}[2]{\mathring{\bcalH}{}^{#1}_{#2}}
\newcommand{\Cc}[2]{\mathring{\sfC}{}^{#1}_{#2}}
\newcommand{\vCc}[2]{\mathring{\bsfC}{}^{#1}_{#2}}
\newcommand{\tCc}[2]{\mathring{\bcalC}{}^{#1}_{#2}}
\DeclareMathOperator{\supp}{supp}
\DeclareMathOperator{\sym}{sym}
\DeclareMathOperator{\skw}{skw}
\DeclareMathOperator{\dev}{dev}
\DeclareMathOperator{\spn}{spn}
\DeclareMathOperator{\tr}{tr}
\DeclareMathOperator{\p}{\partial}
\DeclareMathOperator{\id}{id}
\DeclareMathOperator{\na}{\nabla}
\DeclareMathOperator{\nac}{\mathring{\na}}
\DeclareMathOperator{\devna}{\dev\nabla}
\DeclareMathOperator{\symna}{\sym\nabla}
\DeclareMathOperator{\symnac}{\mathring{\symna}}
\DeclareMathOperator{\rot}{rot}
\DeclareMathOperator{\rotc}{\mathring{\rot}}
\DeclareMathOperator{\Rot}{Rot}
\DeclareMathOperator{\Rotc}{\mathring{\Rot}}
\DeclareMathOperator{\Rotcs}{\Rotc_{\bbS}}
\DeclareMathOperator{\Rott}{\Rot^{\top}}
\DeclareMathOperator{\RotRot}{\Rot\Rot}
\DeclareMathOperator{\RotRotc}{\mathring{\RotRot}}
\DeclareMathOperator{\RotRott}{\RotRot^{\top}}
\DeclareMathOperator{\RotRotst}{\RotRot_{\bbS}^{\top}}
\DeclareMathOperator{\RotRotcst}{\RotRotc_{\bbS}^{\top}}
\DeclareMathOperator{\symRot}{\sym\Rot}
\DeclareMathOperator{\symRott}{\symRot_{\bbT}}
\def\div{\operatorname{div}}
\DeclareMathOperator{\divc}{\mathring{\div}}
\DeclareMathOperator{\Div}{Div}
\DeclareMathOperator{\Divc}{\mathring{\Div}}
\DeclareMathOperator{\Divs}{\Div_{\bbS}}
\DeclareMathOperator{\Divcs}{\Divc_{\bbS}}
\DeclareMathOperator{\Divct}{\Divc_{\bbT}}
\def\vv{\boldsymbol{v}}
\def\vu{\boldsymbol{u}}
\def\vx{\boldsymbol{x}}
\def\vb{\boldsymbol{b}}
\def\vd{\boldsymbol{d}}
\def\vp{\boldsymbol{p}}
\def\vq{\boldsymbol{q}}
\def\vr{\boldsymbol{r}}
\def\vphi{\boldsymbol{\phi}}
\def\tI{\boldsymbol{I}}
\def\tA{\boldsymbol{A}}
\def\tE{\boldsymbol{E}}
\def\tM{\boldsymbol{M}}
\def\tN{\boldsymbol{N}}
\def\tP{\boldsymbol{P}}
\def\tQ{\boldsymbol{Q}}
\def\tPhi{\boldsymbol{\Phi}}
\def\A{\operatorname{A}}
\def\cA{\operatorname{\mathcal{A}}}
\def\Pot{\operatorname{P}}
\def\Qot{\operatorname{Q}}
\DeclareMathOperator{\Az}{\A_{0}}
\DeclareMathOperator{\Ao}{\A_{1}}
\DeclareMathOperator{\At}{\A_{2}}
\DeclareMathOperator{\Azs}{\A_{0}^{*}}
\DeclareMathOperator{\Aos}{\A_{1}^{*}}
\DeclareMathOperator{\Ats}{\A_{2}^{*}}
\DeclareMathOperator{\cAo}{\cA_{1}}
\DeclareMathOperator{\cAzs}{\cA_{0}^{*}}
\newcommand{\norm}[1]{|#1|}
\newcommand{\scp}[2]{\langle#1,#2\rangle}
\title[The Elasticity Complex]
{The Elasticity Complex:\\ Compact Embeddings and Regular Decompositions}
\author{Dirk Pauly}
\address{Fakult\"at f\"ur Mathematik,
Universit\"at Duisburg-Essen, Campus Essen, Germany}
\email[Dirk Pauly]{dirk.pauly@uni-due.de}
\author{Walter Zulehner}
\address{Johann Radon Institute for Computational and Applied Mathematics,
Austrian Academy of Sciences, Altenberger Stra{\ss}e 69, 4040 Linz, Austria}
\email[Walter Zulehner]{zulehner@numa.uni-linz.ac.at}
\keywords{elasticity complex, Hilbert complexes, Helmholtz decompositions, compact embeddings,
Friedrichs/Poincar\'e type estimates, regular decompositions, cohomology groups}
\subjclass{35G15, 58A14}
\date{\today; Corresponding Author: Dirk Pauly}
\thanks{}
\begin{document}

\def\titlerepude{\sf The Elasticity Complex:\\ Compact Embeddings and Regular Decompositions}
\def\authorrepude{Dirk Pauly \& Walter Zulehner}
\def\daterepdue{March 11, 2021}
\def\reportudemathyesno{no}
\def\reportudemathnumber{SM-UDE-824}
\def\reportudemathyear{2021}
\def\reportudematheingang{\daterepdue}
\newcommand{\preprintudemath}[5]{
\thispagestyle{empty}
\begin{center}\normalsize SCHRIFTENREIHE DER FAKULT\"AT F\"UR MATHEMATIK\end{center}
\vspace*{5mm}
\begin{center}#1\end{center}
\vspace*{5mm}
\begin{center}by\end{center}
\vspace*{0mm}
\begin{center}#2\end{center}
\vspace*{5mm}
\normalsize 
\begin{center}#3\hspace{69mm}#4\end{center}
\newpage
\thispagestyle{empty}
\vspace*{210mm}
Received: #5
\newpage
\addtocounter{page}{-2}
\normalsize
}
\ifthenelse{\equal{\reportudemathyesno}{yes}}
{\preprintudemath{\titlerepude}{\authorrepude}{\reportudemathnumber}{\reportudemathyear}{\reportudematheingang}}
{}


\begin{abstract}
We investigate the Hilbert complex of elasticity involving spaces of symmetric tensor fields. 
For the involved tensor fields and operators we show closed ranges, 
Friedrichs/Poincar\'e type estimates, Helmholtz type decompositions, 
regular decompositions, regular potentials, 
finite cohomology groups, and, most importantly, new compact embedding results. 
Our results hold for general bounded strong Lipschitz domains of arbitrary topology
and rely on a general functional analysis framework (FA-ToolBox).
Moreover, we present a simple technique to prove the compact embeddings
based on regular decompositions/potentials and Rellich's selection theorem, 
which can be easily adapted to any Hilbert complex.
\end{abstract}


\vspace*{-5mm}
\maketitle
\setcounter{tocdepth}{4}
\tableofcontents


\section{Introduction}

This paper is devoted to a discussion of the elasticity complex
\begin{equation} 
\label{ElastCompl}
\xymatrixcolsep{9ex}
\xymatrix{
\vL{2}{}(\om) \ar[r]^-{\symna} &
\tL{2}{\bbS}(\om) \ar[r]^-{\RotRotst} &
\tL{2}{\bbS}(\om) \ar[r]^-{\Divs} &
\vL{2}{}(\om),
}
\end{equation}
which plays an important role in the analysis of the elasticity equations as well as in the construction 
and analysis of appropriate numerical methods.

The elasticity complex is a Hilbert complex, i.e., it consists of a sequence of Hilbert spaces, here 
involving $\vL{2}{}(\om)$ and $\tL{2}{\bbS}(\om)$, the spaces of square-integrable vector fields 
and symmetric tensor fields on a domain $\om\subset\reals^3$,
and a sequence of densely defined and closed linear operators, here the (unbounded) differential 
operators $\symna$, $\RotRotst$, $\Divs$, which denote the symmetric part of the gradient 
of vector fields, the Saint-Venant operator, given by
$$\RotRotst\tM=\Rot\big((\Rot\tM)^\top\big)$$
for symmetric tensor fields $\tM$, where $\Rot$ is the (row-wise) rotation of a tensor field, 
and the (row-wise) divergence  of symmetric tensor fields, respectively.

The key property of a complex is that the superposition of two successive operators vanishes, here
$$\RotRotst\,\symna=0,\quad 
\Divs\,\RotRotst=0.$$

In particular, we will consider the operators in \eqref{ElastCompl} on their respective domains of definition 
of fields with homogeneous boundary conditions
$$\vHc{}{}(\symna,\om)=\vHc{1}{}(\om)\subset\vL{2}{}(\om),\quad
\tHc{}{\bbS}(\RotRott,\om)\subset\tL{2}{\bbS}(\om),\quad
\tHc{}{\bbS}(\Div,\om)\subset\tL{2}{\bbS}(\om),$$
which leads to the associated (primal) domain complex
$$\xymatrixcolsep{9ex}
\xymatrix{
\vHc{1}{}(\om)\ar[r]^-{\symnac}&
\tHc{}{\bbS}(\RotRott,\om)\ar[r]^-{\RotRotcst}&
\tHc{}{\bbS}(\Div,\om)\ar[r]^-{\Divcs}&
\vL{2}{}(\om),
}$$
where $\symnac$, $\RotRotcst$, $\Divcs$ indicate the restrictions of $\symna$, $\RotRotst$, $\Divs$ 
to these domains of definition. This domain complex consists of bounded linear operators on their respective domains of definition, 
which are Hilbert spaces if equipped with the respective graph norms.

The corresponding dual domain complex is given by
$$\xymatrixcolsep{9ex}
\xymatrix{
\vL{2}{}(\om) & \ar[l]_-{-\Divs} 
\tH{}{\bbS}(\Div,\om) & \ar[l]_{\RotRotst} 
\tH{}{\bbS}(\RotRott,\om) & \ar[l]_-{-\symna} 
\vH{1}{}(\om)
}$$
with the respective domains of definition of fields with no boundary conditions
$$\tH{}{\bbS}(\Div,\om) \subset \tL{2}{\bbS}(\om), \quad
  \tH{}{\bbS}(\RotRott,\om) \subset \tL{2}{\bbS}(\om), \quad
  \vH{}{}(\symna,\om) = \vH{1}{}(\om) \subset \vL{2}{}(\om) ,$$
see Section \ref{sec:Not}
for detailed definitions of the involved spaces and operators.

As novel and main result of this paper we will show that the embeddings
\begin{align}
\label{intro:cptemb}
\begin{split}
\tHc{}{\bbS}(\RotRott,\om)\cap\tH{}{\bbS}(\Div,\om) & \cpt\tL{2}{\bbS}(\om),\\ 
\tHc{}{\bbS}(\Div,\om)\cap\tH{}{\bbS}(\RotRott,\om) & \cpt\tL{2}{\bbS}(\om)
\end{split}
\end{align}
are compact for bounded strong Lipschitz domains $\Omega$, see Theorem \ref{cptembela1}. 
The proofs are based on the existence of regular potentials and regular decompositions, 
in particular, if $\Omega$ is additionally topologically trivial.
From these compact embeddings, closed ranges of the involved operators, 
finite-dimensional cohomology groups, 
geometric Helmholtz type decompositions, and
Friedrichs/Poincar\'e type estimates
follow then by abstract functional analytic arguments. For details see the FA-ToolBox, e.g., 
from \cite[Section 2.1]{paulyzulehner2019a}
or \cite{paulymaxconst2,P2019b,P2019a,paulyapostfirstordergen},
and, in particular, Theorem \ref{minitbtheo1} and the results 
presented in \cite{2020arXiv200111007P}. 

Similar results are well-known, e.g., for the classical de Rham domain complex in 3D
\begin{equation}
\label{derhamcompl}
\begin{gathered}
\xymatrixcolsep{5ex}\xymatrixrowsep{1ex}
\xymatrix{
\Hc{1}{}(\om) \ar[r]^-{\nac} & \vHc{}{}(\rot,\om) \ar[r]^-{\rotc} & \vHc{}{}(\div,\om) \ar[r]^-{\divc} & \L{2}{}(\om), 
\\
\mathsf{L}^{2}(\om) & 
\ar[l]_-{-\div} \vH{}{}(\div,\om) & 
\ar[l]_-{\rot} \vH{}{}(\rot,\om) & 
\ar[l]_-{-\na} \H{1}{}(\om) 
}
\end{gathered}
\end{equation}
with the gradient $\na$ of scalar functions, the rotation $\rot$ and the divergence $\div$ of vector fields, 
on their respective domains of definition of fields with and without boundary conditions,  
and also for the classical de Rham complex in ND
(or even on Riemannian manifolds) using differential forms.
Note that in the de Rham complex (of any dimension) no second order operators occur, contrary to the elasticity complex. 
Here, the famous Weck selection theorems \cite{weckmax}, i.e., 
the compactness of the embeddings
$$\vHc{}{}(\rot,\om)\cap\vH{}{}(\div,\om)\cpt\vL{2}{}(\om),\qquad
\vHc{}{}(\div,\om)\cap\vH{}{}(\rot,\om)\cpt\vL{2}{}(\om),$$
for bounded (possibly non-smooth) domains play the central role. 
In Weck's work \cite{weckmax} one finds the first result for piece-wise smooth domains,
and in Weber's paper \cite{webercompmax} the first proof being valid for strong Lipschitz domains is given.
See Picard's contribution \cite{picardcomimb} for the first result for weak Lipschitz domains.
For the most recent state of the art results holding for weak Lipschitz domains
and even for mixed boundary conditions see 
\cite{bauerpaulyschomburgmcpweaklip} and 
\cite{bauerpaulyschomburgmaxcompweakliprNarxiv,bauerpaulyschomburgmaxcompweakliprN}.

We will follow the approach taken in \cite{paulyzulehner2019a} 
but with some significant improvements.
In \cite{paulyzulehner2019a} we investigated the 
Gradgrad-domain complex and its dual the divDiv-domain complex, i.e.,
\begin{equation} 
\label{divDivcompl}
\begin{gathered}
\xymatrixcolsep{9ex}\xymatrixrowsep{1ex}
\xymatrix{
\Hc{2}{}(\om) \ar[r]^-{\mathring{\na\na}} &
\tHc{}{\bbS}(\Rot,\om) \ar[r]^-{\Rotcs} &
\tHc{}{\bbT}(\Div,\om) \ar[r]^-{\Divct} &
\vL{2}{}(\om),
\\
\L{2}{}(\om) & 
\ar[l]_-{\div\Divs} \tH{}{\bbS}(\div\Div,\om) &
\ar[l]_-{\symRott} \tH{}{\bbT}(\symRot,\om) &
\ar[l]_-{-\devna} \vH{1}{}(\om), 
}
\end{gathered}
\end{equation}
where the subscript $\bbT$ refers to spaces and operators of deviatoric tensor fields, $\na\na$ is the Hessian of scalar functions.
These complexes are related to the biharmonic equation and the Einstein-Bianchi equations from general relativity. 
We have shown that the embeddings\footnote{A closer inspection of the respective proof of \cite[Lemma 3.22]{paulyzulehner2019a} 
shows immediately that even the embedding
$$\tHc{}{\bbS}(\Rot,\om)\cap\tH{0,-1}{\bbS}(\div\Div,\om)\cpt\tL{2}{\bbS}(\om)$$ 
is compact.}
\begin{align*}
\tHc{}{\bbS}(\Rot,\om)\cap\tH{}{\bbS}(\div\Div,\om)\cpt\tL{2}{\bbS}(\om),\\
\tHc{}{\bbT}(\Div,\om)\cap\tH{}{\bbT}(\symRot,\om)\cpt\tL{2}{\bbT}(\om)
\end{align*}
are compact, see \cite[Lemma 3.22]{paulyzulehner2019a}.

We emphasize that all these results fit into the common framework of Hilbert domain complexes of the form
\begin{equation}
\label{hilcomplexgen}
\begin{gathered}
\xymatrixcolsep{4ex}\xymatrixrowsep{1ex}
\xymatrix{
D(\Az) \ar[r]^-{\Az} & 
D(\Ao) \ar[r]^-{\Ao} & 
\H{}{2}, \\
\H{}{0} & \ar[l]_-{\Azs}
D(\Azs) & \ar[l]_-{\Aos}  
D(\Aos) & 
}
\end{gathered}
\end{equation}
and compact embeddings 
$$D(\Ao)\cap D(\Azs)\cpt\H{}{1}$$
for densely defined and closed (unbounded) linear operators
$$\Az:D(\Az)\subset\H{}{0}\to\H{}{1},\qquad
\Ao:D(\Ao)\subset\H{}{1}\to\H{}{2}$$
on three Hilbert spaces $\H{}{0}$, $\H{}{1}$, $\H{}{2}$ with (Hilbert space) adjoints
$$\Azs:D(\Azs)\subset\H{}{1}\to\H{}{0},\qquad
\Aos:D(\Aos)\subset\H{}{2}\to\H{}{1}.$$
The key property of a complex reads generally $\Ao\Az=0$, i.e.,
$$R(\Az)\subset N(\Ao)$$ 
or equivalently $R(\Aos)\subset N(\Azs)$.
Here and throughout this paper,
the domain of definition, the kernel, and the range of a linear operator $\A$
are denoted by $D(\A)$, $N(\A)$, and $R(\A)$, respectively.

Some important applications, such as proper solution theories 
for first or second order systems, 
various variational formulations, and a posteriori error estimates
for the respective operators of a Hilbert complex
are discussed in detail in \cite{paulyapostfirstordergen}.

The elasticity complex is subject to past, recent, and ongoing research.
Among others, we cite the important works in 
\cite{zbMATH07045589},
\cite{zbMATH05763606},
\cite{arnoldfalkwintherfemec,zbMATH05130996,zbMATH05190885,zbMATH05696861},
\cite{zbMATH01837365},
\cite{zbMATH01368677,zbMATH01551744}
\cite{zbMATH05045995,zbMATH05129883},
\cite{ciarletciarletgeymonatkrasucki2007},
\cite{zbMATH05013398,zbMATH05492696}
as well as the vast literature cited therein. 
When writing this manuscript, cf. our arXiv-preprint \cite{paulyzulehner2020aarxiv}, we were 
not aware of papers on the elasticity complex dealing with compact embeddings.
After completion we found the recent arXiv-preprint \cite{arnoldhu2020a}
on which the following remark is about.

\begin{rem}
\label{rem:BGG}
In this paper as well as in \cite{paulyzulehner2019a} properties of the elasticity complex, 
the $\na\na$-complex and the $\div\Divs$-complex are derived from known properties 
of the de Rham complex based on the same principle. 
The strategy in our work is to show in a first step compact embeddings. 
Then all the other important properties like closed ranges 
and finite-dimensional cohomology groups follow by abstract functional analytic arguments. 
A different approach is taken in \cite{arnoldhu2020a}, 
which deals in a more general way with the question how to derive complexes 
and their properties from known complexes. 
The basic technique in \cite{arnoldhu2020a} is to utilize 
the  Bernstein-Gelfand-Gelfand (BGG) resolution, 
which can be seen as a primarily algebraic procedure for deriving new complexes. 
The first step in \cite{arnoldhu2020a} for analyzing 
these new complexes is to verify that the cohomology groups of the new complexes 
are finite-dimensional provided the cohomology groups to the known complexes are finite dimensional. 
Then all the other important properties like closed ranges and compact embeddings are derived.

In the end the final results on properties like compact embeddings, 
closed ranges, and finite-dimensional cohomology groups, of the two approaches 
- the more algebraic approach in  \cite{arnoldhu2020a} and our more analytic approach - 
coincide to a large extend if applied 
to the elasticity complex, the $\na\na$-complex and the $\div\Divs$-complex.  

In more technical details, the approach in this paper relies on the existence of regular potentials for the de Rham complex for topologically trivial domains, while the approach in \cite{arnoldhu2020a}, if applied to the elasticity complex, is based on the existence of cohomology spaces of smooth functions whose dimensions are independent of the degree of regularity of the involved Sobolev spaces. All these properties are available for the de Rham complex of  Sobolev spaces without boundary conditions and with full Dirichlet boundary conditions, the situation considered in \cite{{arnoldhu2020a}} as well as here.    
The situation for other boundary conditions is not completely clear to us and might differentiate the two approaches.
In any case, we feel strongly that both approaches are very important 
and shed light on new complexes from two different angles.
\end{rem}

The paper is organised as follows. We start in Section \ref{sec:Not} by introducing the notations used in this paper. 
In Subsection \ref{ssec:SpaceDecomp} the concept of potential operators, 
which are the most important tool for the whole paper, is discussed on an abstract level. 
In Subsection \ref{ssec:cptembregdeco} we present a new, elegant, abstract, and short way
to prove compact embedding results for an arbitrary Hilbert complex.
In Section \ref{ssec:DeRham} known results on potential operators for the de Rham complex are recalled.
This will point out the way to prove the corresponding results for the elasticity complex. 
Section \ref{sec:ElastCompl} contains the main results of the paper, 
the existence of potential operators and compact embeddings for the elasticity complex,
and, as a consequence, related space decompositions. 
Readers who are interested in more technical details 
and additional and more sophisticated results
are referred to \cite{2020arXiv200111007P},
which also contains an extended list of results which directly 
follow from the FA-ToolBox. Finally, useful formulas are collected in Appendix \ref{app:Formulas}.

\section{Notations and Preliminaries}
\label{sec:Not}

Throughout the paper lower-case standard letters are used for denoting scalars and scalar functions, 
lower-case boldface letters for vectors and vector fields, and upper-case boldface letters for matrices/tensors and tensor fields.

We will use the following notations from linear algebra for $\tA \in \reals^{3\times3}$, $\vv \in \reals^3$:
$$  \sym \tA := \frac{1}{2}(\tA + \tA^\top) , \quad
  \skw := \frac{1}{2}(\tA - \tA^\top), \quad
  \tr \tA := \sum_{i=1}^3 A_{ii}, \quad
  \spn \vv 
  := \begin{bmatrix} 0 & - v_3 & v_2 \\ v_3 & 0 & -v_1 \\ -v_2 & v_1 & 0 \end{bmatrix}.$$
The identity matrix is denoted by $\tI$ and $\bbS$ is the set of symmetric matrices in $\reals^{3\times3}$.

Moreover, $\na \varphi$ is used for the gradient of a scalar function $\varphi$,
and $\na \vphi$, $\rot \vphi$, $\div \vphi$ denote the Jacobian, rotation, 
divergence of a vector field $\vphi$, respectively. 
For a tensor field $\tPhi$, $\Rot \tPhi$ and $\Div \tPhi$ denote the tensor field 
and the vector field that result from a row-wise application of $\rot$  and $\div$, respectively. 
Additionally, for smooth fields
we introduce the differential operators $\symna$, $\Rott$, $ \RotRott$ by
$$  (\symna) \vphi := \sym (\na \vphi), \quad
  \Rott \tPhi := (\Rot \tPhi)^\top, \quad
  (\RotRott) \tPhi := \Rot (\Rott \tPhi).$$
Depending on the context these differential operators are understood in the strong sense or in the distributional sense.

\subsection{Domains}
\label{sec:domains}

Throughout this paper we suppose the following:

\begin{genass}
\label{genass1}
$\om\subset\rt$ is a bounded strong
Lipschitz domain with boundary $\ga$.
\end{genass}

This means that $\ga$ is locally a graph of some Lipschitz function.

\begin{defi}[Trivial topology]
A bounded strong Lipschitz domain $\om$ with boundary $\Gamma$ is called additionally topologically trivial, 
if $\Omega$ 
has no handles (also known as $\Omega$ being simply connected) and $\Gamma$ is connected, i.e.,
all Betti numbers are trivial.
\end{defi}

To use standard localisation techniques, we introduce an open covering $(U_{\ell})$ of $\ol{\om}$,
such that all $\om_{\ell}:=\om\cap U_{\ell}$ are topologically trivial (bounded) strong Lipschitz domains.
As $\ol{\om}$ is compact, there exists a finite subcovering denoted by $(U_{1},\dots,U_{L})$. 
We fix a partition of unity $(\varphi_{\ell})_{\ell=1,\dots,L}$ subordinate to $(U_{\ell})_{\ell=1,\dots,L}$ 
with $\varphi_{\ell}\in\Cc{\infty}{}(U_{\ell})$, $\ell=1,\dots,L$,
and, additionally, $\phi_{\ell}\in\Cc{\infty}{}(U_{\ell})$ 
with $\phi_{\ell}|_{\supp\varphi_{\ell}}=1$, $\ell=1,\dots,L$.

\subsection{FA-ToolBox}
\label{sec:FAToolBox}

We make use of the FA-ToolBox presented in detail in, e.g., 
\cite{paulyzulehner2019a} and \cite{paulymaxconst2,P2019b,P2019a,paulyapostfirstordergen},
together with an extension suited for (bounded linear) regular potential operators
and regular decompositions in Subsection \ref{ssec:SpaceDecomp}.
In Subsection \ref{ssec:cptembregdeco} we present a new, elegant, abstract, and short way
to prove compact embedding results for an arbitrary Hilbert complex. This abstract result will be of fundamental importance for the analysis of the elasticity complex in Section \ref{sec:ElastCompl}.

\subsubsection{Functional Analytical Setting}
\label{sec:FASetting}

The inner product and the norm in a Hilbert space $\H{}{}$ are denoted by $\norm{\cdot}_{\H{}{}}$ 
and $\scp{\cdot}{\cdot}_{\H{}{}}$, respectively. 
We use $\sfV^{\bot_{\H{}{}}}$ for the orthogonal complement of a subspace $\sfV$ of $\H{}{}$. 
The notations $\sfV_{1}+\sfV_{2}$, $\sfV_{1}\dotplus\sfV_{2}$, $\sfV_{1}\oplus_{\H{}{}}\sfV_{2}$ 
indicate the sum, the direct sum, the orthogonal sum of two subspaces $\sfV_{1}$, $\sfV_{2}$ of $\H{}{}$, respectively.
Throughout this paper, the domain of definition, the kernel, and the range of a linear operator $\A$
is denoted by $D(\A)$, $N(\A)$, and $R(\A)$, respectively.

Let $\H{}{0}$, $\H{}{1}$, $\H{}{2}$ be Hilbert spaces.
We shall consider densely defined, closed, and \emph{unbounded} linear operators
$$\A:D(\A)\subset\H{}{1}\to\H{}{2},$$
where unboundedness refers to the norms in $\H{}{1}$ and $\H{}{2}$,
as well as \emph{bounded} linear operators
$$\A:D(\A)\to\H{}{2},$$
where boundedness refers to the norms in $D(\A)$ and $\H{}{2}$ 
with $D(\A)$ as Hilbert space, equipped with the graph inner product. 
The different notations indicate the difference in the boundedness.
Note that, in contrast to the Banach space adjoint of the bounded operator $\A$,
the \emph{Hilbert space adjoint} of the unbounded operator $\A$ is given by
the densely defined, closed, and unbounded linear operator
$$\A^{*}:D(\A^{*})\subset\H{}{2}\to\H{}{1}$$
defined by 
$D(\A^{*})=\{y\in\H{}{2}:\exists\,f\in\H{}{1}\;\forall\,x\in D(\A)\;\;\scp{\A x}{y}_{\H{}{2}}=\scp{x}{f}_{\H{}{1}}\}$
and $\A^{*}y=f$ for $y\in D(\A^{*})$,
and hence characterised by
$$\forall\,y\in D(\A^{*})\quad
\forall\,x\in D(\A)\qquad
\scp{\A x}{y}_{\H{}{2}}=\scp{x}{\A^{*}y}_{\H{}{1}}.$$

\begin{rem}[Geometric Helmholtz type decompositions and reduced operators]
\label{rem:helmtypedeoc1}
We recall that the unbounded operator $\A$ induces
by $\ol{R(\A^{*})}=N(\A)^{\bot_{\H{}{1}}}$
and $\ol{R(\A)}=N(\A^{*})^{\bot_{\H{}{2}}}$ 
the Helmholtz type decompositions
$$\H{}{1}=\ol{R(\A^{*})}\oplus_{\H{}{1}}N(\A),\qquad
\H{}{2}=\ol{R(\A)}\oplus_{\H{}{2}}N(\A^{*}),$$
leading directly to the so-called reduced operators
$\cA:=\A|_{N(\A)^{\bot_{\H{}{1}}}}$ and $\cA^{*}:=\A^{*}|_{N(\A^{*})^{\bot_{\H{}{2}}}}$, i.e.,
\begin{align*}
\cA:D(\cA)\subset\ol{R(\A^{*})}&\to\ol{R(\A)},
&
D(\cA)&:=D(\A)\cap\ol{R(\A^{*})}
=D(\A)\cap N(\A)^{\bot_{\H{}{1}}},\\
\cA^{*}:D(\cA^{*})\subset\ol{R(\A)}&\to\ol{R(\A^{*})},
&
D(\cA^{*})&:=D(\A^{*})\cap\ol{R(\A)}
=D(\A^{*})\cap N(\A^{*})^{\bot_{\H{}{2}}}.
\end{align*}
Then we have also the Helmholtz type decompositions
\begin{align*}
D(\A)&=D(\cA)\oplus_{\H{}{1}}N(\A),
&
D(\A^{*})&=D(\cA^{*})\oplus_{\H{}{2}}N(\A^{*})
\end{align*}
and it holds $R(\A)=R(\cA)$ and $R(\A^{*})=R(\cA^{*})$.
Note that, as $\cA$ and $\cA^{*}$ are injective and by the closed graph theorem,
$\cA^{-1}$ and $(\cA^{*})^{-1}$ are bounded if and only if the ranges $R(\A)$ and $R(\A^{*})$ are closed
if and only if the Friedrichs/Poincar\'e type estimates 
\begin{align*}
\exists\,c&>0\quad\forall\,x\in D(\cA)
&\norm{x}_{\H{}{1}}&\leq c\,\norm{\A x}_{\H{}{2}},\\
\exists\,c&>0\quad\forall\,y\in D(\cA^{*})
&\norm{y}_{\H{}{2}}&\leq c\,\norm{\A^{*}y}_{\H{}{1}}
\end{align*}
hold, respectively. Moreover, by the closed range theorem,
$R(\A)$ is closed if and only if $R(\A^{*})$ is closed.
\end{rem}

\subsubsection{Hilbert Complexes and Compact Embeddings}
\label{sec:HilbertComComptEmb}

Our main interest are \emph{Hilbert complexes}
\begin{equation*}
\xymatrixcolsep{4ex}\xymatrixrowsep{1ex}
\xymatrix{
\H{}{0}\ar[r]^-{\Az}& 
\H{}{1}\ar[r]^-{\Ao}& 
\H{}{2},\\
\H{}{0}&\ar[l]_-{\Azs}
\H{}{1}&\ar[l]_-{\Aos}  
\H{}{2}& 
}
\end{equation*}
of densely defined, closed, and unbounded linear operators
$$\Az:D(\Az)\subset\H{}{0}\to\H{}{1},\qquad
\Ao:D(\Ao)\subset\H{}{1}\to\H{}{2}$$
with (Hilbert space) adjoints
$$\Azs:D(\Azs)\subset\H{}{1}\to\H{}{0},\qquad
\Aos:D(\Aos)\subset\H{}{2}\to\H{}{1},$$
and corresponding \emph{compact embeddings}
$$D(\Ao)\cap D(\Azs)\cpt\H{}{1}.$$
Equivalently, we shall denote these Hilbert complexes 
as \emph{Hilbert domain complexes} of bounded linear operators and write
\begin{equation*}
\xymatrixcolsep{4ex}\xymatrixrowsep{1ex}
\xymatrix{
D(\Az)\ar[r]^-{\Az}& 
D(\Ao)\ar[r]^-{\Ao}& 
\H{}{2},\\
\H{}{0}&\ar[l]_-{\Azs}
D(\Azs)&\ar[l]_-{\Aos}  
D(\Aos),& 
}
\end{equation*}
cf.~\eqref{hilcomplexgen}.
The \emph{key property} of a complex reads generally 
\begin{align}
\label{keyprop}
\Ao\Az\subset0,
\end{align}
i.e., $R(\Az)\subset N(\Ao)$ or equivalently $R(\Aos)\subset N(\Azs)$.
Throughout the rest of Section \ref{sec:Not} we assume the following to hold.

\begin{genass}
\label{genass2}
$\xymatrix{\H{}{0}\ar[r]^-{\Az}&\H{}{1}\ar[r]^-{\Ao}&\H{}{2}}$
is a Hilbert complex of densely defined and closed (unbounded) linear operators 
$\Az:D(\Az)\subset\H{}{0}\to\H{}{1}$ and
$\Ao:D(\Ao)\subset\H{}{1}\to\H{}{2}$.
Note that the complex property \eqref{keyprop} holds, i.e.,
$\Ao\Az\subset0$ and $\Azs\Aos\subset0$.
\end{genass}

\subsubsection{Potentials and Decompositions}
\label{ssec:SpaceDecomp}

Let 
$$\A:D(\A)\subset\H{}{1}\to\H{}{2}$$
be a densely defined and closed (unbounded) linear operator.

\begin{defi}[Potential operator]
\label{def:potdef}
Let $\H{+}{1}\subset D(\A)$ be another Hilbert space.
A bounded linear operator 
\begin{equation} 
\label{Pot}
\Pot_{\A}:R(\A)\To\H{+}{1}
\qquad\text{with}\qquad 
\A\Pot_{\A}=\id_{R(\A)}
\end{equation}
is called a potential operator associated to $\A$. 
In short, a potential operator is a bounded right inverse of $\A$. 
\end{defi}

By definition we have, for each $y\in R(\A)$,
\begin{equation} 
\label{zy}
y=\A p\quad\text{with}\quad p:=\Pot_{\A}y.
\end{equation}
We call $p$ a \emph{potential} of $y$.
For each $x \in D(\A)$, we have the representation
\begin{equation} 
\label{stableDecom1}
  x = p + x_0
  \quad \text{with} \quad
  p :=  \Pot_{\A} \A x \in R(\Pot_{\A}) \subset \H{+}{1},\quad
    \A p = \A x,
  \quad \text{and} \quad
  x_0 := x - p \in N(\A),
\end{equation}
which leads to the following space decomposition.

\begin{lem}[Potential decomposition]
\label{lem:SpaceRegDeco}
Let $\Pot_{\A}$ be a potential operator. Then the potential decomposition
$D(\A)= R(\Pot_{\A})\dotplus N(\A)=\H{+}{1}+N(\A)$
and $R(\A)=\A R(\Pot_{\A})=\A\H{+}{1}$ hold.
\end{lem}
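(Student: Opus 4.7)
The proof plan is essentially to unpack the formula \eqref{stableDecom1}, which is already given above the lemma, and observe that all stated equalities follow from the defining property $\A\Pot_{\A}=\id_{R(\A)}$ together with the inclusion $\H{+}{1}\subset D(\A)$.

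First I would establish the decomposition $D(\A)=R(\Pot_{\A})+N(\A)$. For any $x\in D(\A)$, the element $\A x$ lies in $R(\A)$, so $p:=\Pot_{\A}\A x\in R(\Pot_{\A})\subset\H{+}{1}\subset D(\A)$ is well defined, and $\A p=\A\Pot_{\A}\A x=\A x$ by \eqref{Pot}. Hence $x_{0}:=x-p\in N(\A)$ and $x=p+x_{0}$, showing $D(\A)\subset R(\Pot_{\A})+N(\A)$; the converse inclusion is trivial since $R(\Pot_{\A})\cup N(\A)\subset D(\A)$.

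Next I would verify directness of the sum. If $p\in R(\Pot_{\A})\cap N(\A)$, write $p=\Pot_{\A}y$ for some $y\in R(\A)$. Then, by \eqref{Pot}, $0=\A p=\A\Pot_{\A}y=y$, and linearity of $\Pot_{\A}$ gives $p=\Pot_{\A}0=0$. This yields $D(\A)=R(\Pot_{\A})\dotplus N(\A)$. To pass from the direct sum to $\H{+}{1}+N(\A)$, the inclusion $R(\Pot_{\A})+N(\A)\subset\H{+}{1}+N(\A)$ is immediate from $R(\Pot_{\A})\subset\H{+}{1}$. Conversely, for any $q\in\H{+}{1}$, since $q\in D(\A)$, the argument of the first step applied to $q$ produces $q=\Pot_{\A}\A q+(q-\Pot_{\A}\A q)\in R(\Pot_{\A})+N(\A)$, so $\H{+}{1}+N(\A)\subset R(\Pot_{\A})+N(\A)$ (note that this second sum is in general not direct, since $\H{+}{1}\cap N(\A)$ may be nontrivial).

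For the range identities, $\A R(\Pot_{\A})=\A\Pot_{\A}R(\A)=R(\A)$ is exactly the right-inverse property \eqref{Pot}. The chain of inclusions $R(\A)=\A R(\Pot_{\A})\subset\A\H{+}{1}\subset R(\A)$, which uses only $R(\Pot_{\A})\subset\H{+}{1}\subset D(\A)$, then forces $R(\A)=\A\H{+}{1}$. There is no genuine obstacle here; the only point requiring care is keeping track of the two distinct sums (direct versus not necessarily direct) when moving from $R(\Pot_{\A})$ to the possibly larger space $\H{+}{1}$.
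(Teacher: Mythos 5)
Your proof is correct and follows exactly the route the paper intends: the paper derives the representation \eqref{stableDecom1} immediately before the lemma and then states the decomposition as a consequence without further argument, which is precisely what you unpack. Your observation that $\H{+}{1}+N(\A)$ need not be direct, while $R(\Pot_{\A})\dotplus N(\A)$ is, correctly matches the different notations ($\dotplus$ versus $+$) used in the statement.
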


Note that the decomposition \eqref{stableDecom1} of Lemma \ref{lem:SpaceRegDeco} 
is stable (topological/continuous/bounded), i.e.,
$$\norm{p}_{\H{+}{1}}\le c^{+}\norm{\A x}_{\H{}{2}},\qquad
\norm{x_{0}}_{\H{}{1}}\le\norm{x}_{\H{}{1}}+c_{1}c^{+}\norm{\A x}_{\H{}{2}},$$
where $c^{+}$ is the norm of $\Pot_{\A}$ and 
$c_{1}$ is the norm of the continuous embedding $\H{+}{1}\subset\H{}{1}$,
which we assume to hold.

\begin{ex}[Geometric potential operator]
\label{ex:geopotop}
Let $R(\A)$ be closed.
The reduced operator of $\A$, cf. Remark \ref{rem:helmtypedeoc1},
$$\cA:D(\cA)\subset R(\A^{*})\To R(\A)$$
is linear and bijective. Its inverse $\Pot_{\A}:=\cA^{-1}:R(\A)\to D(\cA)$ is bounded
(closed graph theorem) and a right inverse of $\A$. 
We call this potential operator the geometric potential operator associated to $\A$. 
It is easy to see that the associated space decomposition is an orthogonal decomposition
$$  D(\A) = D(\cA) \oplus_{\H{}{1}} N(\A), $$
which we call a geometric (or Helmholtz type) decomposition.
Note that the reduced bounded variant
$\cA:D(\cA)\To R(\A)$ is a topological isomorphism.
\end{ex}

The essential tool used in this paper is the existence of potential operators, 
where $\H{+}{1}$ is a smoother space than $D(\cA)$
allowing for a compact embedding $\H{+}{1}\cpt\H{}{1}$,
realised by, e.g., the well known Rellich's selection theorem. 
In this case we speak of \emph{regular potentials} and introduce 
the concept of \emph{regular potential operators}:

\begin{defi}
A potential operator $\Pot_{\A}\colon R(\A)\To\H{+}{1}$ 
with a compact embedding $\H{+}{1}\cpt\H{}{1}$ is called a regular potential operator. 
\end{defi}

Recall our General Assumption \ref{genass2}
and the corresponding Hilbert domain complex 
$$\xymatrix{D(\Az)\ar[r]^-{\Az}&D(\Ao)\ar[r]^-{\Ao}&\H{}{2}}.$$
Let
\begin{equation} 
\label{Potcomplex}
  \Pot_{\Az} \colon R(\Az) \To \H{+}{0}, \qquad
  \Pot_{\Ao} \colon R(\Ao) \To \H{+}{1}
\end{equation}
be associated potential operators. 
Moreover, let us assume that the Hilbert complex is \emph{exact}, i.e.,
\begin{equation} 
\label{exact}
R(\Az) = N(\Ao).
\end{equation}
Using \eqref{stableDecom1} for $\Ao$ and \eqref{zy} for $\Az$ together with 
the exact complex property \eqref{exact}, 
we obtain, for each $x \in D(\Ao)$, the representation 
\begin{equation} 
\label{stableDecom2}
  x = p_{1} + \Az p_{0} 
  \quad \text{with} \quad 
  p_{1} := \Pot_{\Ao} \Ao x,\quad 
  \Ao p_{1} = \Ao x,\quad 
  p_{0} := \Pot_{\Az} (x - p_{1}),
\end{equation}
which leads to a refined version of the space decomposition from above.

\begin{lem}[Refined potential decomposition]
\label{lem:SpaceRegDecorefined}
Let $R(\Az)=N(\Ao)$, 
and let $\Pot_{\Az}$, $\Pot_{\Ao}$ be potential operators. Then
$$D(\Ao)
=R(\Pot_{\Ao})\dotplus\Az R(\Pot_{\Az})
=R(\Pot_{\Ao})\dotplus\Az\H{+}{0}
=\H{+}{1}+\Az\H{+}{0}$$
as well as $R(\Az)=\Az R(\Pot_{\Az})=\Az\H{+}{0}$ and $R(\Ao)=\Ao R(\Pot_{\Ao})=\Ao\H{+}{1}$.
\end{lem}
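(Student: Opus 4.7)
The plan is to derive the lemma by applying Lemma~\ref{lem:SpaceRegDeco} twice---once to $\Ao$ and once to $\Az$---and then to stitch the two results together via the exactness hypothesis $R(\Az)=N(\Ao)$.

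First, I would apply Lemma~\ref{lem:SpaceRegDeco} to $\Ao$, which immediately yields
\begin{align*}
D(\Ao)&=R(\Pot_{\Ao})\dotplus N(\Ao)=\H{+}{1}+N(\Ao),
&
R(\Ao)&=\Ao R(\Pot_{\Ao})=\Ao\H{+}{1},
\end{align*}
and then to $\Az$, which gives $R(\Az)=\Az R(\Pot_{\Az})=\Az\H{+}{0}$. The second line already establishes the stated formula for $R(\Ao)$, while the second application of Lemma~\ref{lem:SpaceRegDeco} establishes the stated formula for $R(\Az)$, so only the decomposition of $D(\Ao)$ remains to be proved.

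Next, I would use exactness to substitute $N(\Ao)=R(\Az)=\Az R(\Pot_{\Az})=\Az\H{+}{0}$ into the first line, obtaining
$$D(\Ao)=R(\Pot_{\Ao})+\Az R(\Pot_{\Az})=R(\Pot_{\Ao})+\Az\H{+}{0}=\H{+}{1}+\Az\H{+}{0}.$$
Directness of the first two sums is inherited from the direct sum $R(\Pot_{\Ao})\dotplus N(\Ao)$, since $\Az R(\Pot_{\Az})\subset R(\Az)=N(\Ao)$; alternatively, if $x\in R(\Pot_{\Ao})\cap\Az R(\Pot_{\Az})$, then on the one hand $x=\Pot_{\Ao}z$ with $z=\Ao\Pot_{\Ao}z=\Ao x$, and on the other hand $\Ao x=0$ by the complex property $\Ao\Az\subset 0$, forcing $z=0$ and hence $x=0$. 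The last sum $\H{+}{1}+\Az\H{+}{0}$ is not claimed to be direct, mirroring Lemma~\ref{lem:SpaceRegDeco}.

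The concrete representation \eqref{stableDecom2} already realises this decomposition explicitly: for $x\in D(\Ao)$, set $p_{1}:=\Pot_{\Ao}\Ao x\in R(\Pot_{\Ao})$; then $x-p_{1}\in N(\Ao)=R(\Az)$, so $p_{0}:=\Pot_{\Az}(x-p_{1})\in R(\Pot_{\Az})$ is well defined and $x=p_{1}+\Az p_{0}$. There is no serious obstacle here; the lemma is essentially a bookkeeping corollary of Lemma~\ref{lem:SpaceRegDeco} together with exactness, and the only point requiring a brief verification is the directness of the sum.
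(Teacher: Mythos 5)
Your proposal is correct and follows essentially the same route as the paper: the paper derives the decomposition via the explicit representation \eqref{stableDecom2} (which is exactly $x=p_1+\Az p_0$ with $p_1=\Pot_{\Ao}\Ao x$, $p_0=\Pot_{\Az}(x-p_1)$, using \eqref{stableDecom1}, \eqref{zy}, and exactness), and you recover this by two applications of Lemma~\ref{lem:SpaceRegDeco} plus the substitution $N(\Ao)=R(\Az)=\Az R(\Pot_{\Az})$. Your added verification of directness (either via $N(\Ao)\cap R(\Pot_{\Ao})=\{0\}$ or via the complex property $\Ao\Az\subset 0$) is sound and makes explicit a point the paper leaves implicit.
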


Decomposition \eqref{stableDecom2} can be rephrased as follows. For each $x \in D(\Ao)$ we have
$$x = \Qot_{\Ao} x + \Az \widetilde \Qot_{\Az} x
\quad\text{with}\quad
\Qot_{\Ao}=\Pot_{\Ao} \Ao
\quad\text{and}\quad
\widetilde\Qot_{\Az}=\Pot_{\Az} (\id_{D(\Ao)} - \Qot_{\Ao}).$$
Note that $\Qot_{\Ao}$ and $\id_{D(\Ao)}-\Qot_{\Ao}$ are projections
and hence $\Qot_{\Ao}$ and $\widetilde\Qot_{\Az}$ are bounded.
More precisely, $\id_{D(\Ao)} - \Qot_{\Ao}$ is a projection onto $N(\Ao)=R(\Az)$. 
So, the operator $\widetilde\Qot_{\Az}$ primarily depends on $\Az$.
The dependency of $\widetilde\Qot_{\Az}$ on $\Ao$ is notationally suppressed for simplicity. 
The operators $\widetilde\Qot_{\Az}$ and $\Qot_{\Ao}$ are bounded, since
\begin{align*}
\norm{\Qot_{\Ao} x}_{\H{+}{1}}&=\norm{p_{1}}_{\H{+}{1}}
\le c_1^+\norm{\Ao x}_{\H{}{2}},\\
\norm{\widetilde\Qot_{\Az} x}_{\H{+}{0}}&=\norm{p_{0}}_{\H{+}{0}}
\le c_{0}^{+}\norm{x-p_{1}}_{\H{}{1}}
\le c_{0}^{+}\norm{x}_{\H{}{1}}
+c_{1}c_{0}^{+}c_{1}^{+}\norm{\Ao x}_{\H{}{2}},
\end{align*}
where $c_{l}^{+}$ is the norm of $\Pot_{\A_{l}}$, $l=0,1$. 

Motivated by this, the concept of a \emph{regular space decomposition} is introduced as follows 
(without any reference to the exactness property \eqref{exact}).

\begin{defi}[Regular decomposition]
\label{def:decompdef}
A decomposition $D(\Ao)=\H{+}{1}+\Az\H{+}{0}$ with compact embeddings $\H{+}{0}\cpt\H{}{0}$ and $\H{+}{1}\cpt\H{}{1}$ is called a regular decomposition, if there are bounded operators 
\begin{equation}
\label{Qotcomplex}
  \Qot_{\Ao} \colon D(\Ao) \To \H{+}{1}, \qquad
  \widetilde\Qot_{\Az} \colon D(\Ao) \To \H{+}{0},
\end{equation}
such that
\begin{equation}
\label{regdecomp}
x = \Qot_{\Ao} x + \Az \widetilde \Qot_{\Az} x
\end{equation}
holds for all $x \in D(\Ao)$. The operators in \eqref{Qotcomplex} are called decomposition operators.
\end{defi}

\subsubsection{Compact Embeddings by Regular Decompositions}
\label{ssec:cptembregdeco}
  
Regular decompositions are very powerful tools.
In particular, compact embeddings can easily be proved,
which then in combination with the FA-ToolBox,
cf.~\cite{paulymaxconst2,P2019b,P2019a,paulyapostfirstordergen,paulyzulehner2019a},
immediately lead to a comprehensive list of important results for the underlying Hilbert complex.

\begin{theo}[Compact embedding by regular decompositions]
\label{cptembmaintheo}
Let $D(\Ao)=\H{+}{1}+\Az\H{+}{0}$ be a regular decomposition.
Then $D(\Ao)\cap D(\Azs)\cpt\H{}{1}$ is compact.
\end{theo}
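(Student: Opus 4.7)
The plan is to take a sequence $(x_{n}) \subset D(\Ao) \cap D(\Azs)$, bounded in the graph norm of the intersection, and extract a subsequence that is Cauchy in $\H{}{1}$. The regular decomposition splits each $x_{n}$ as
\[
x_{n} = p_{1,n} + \Az p_{0,n},
\qquad
p_{1,n} := \Qot_{\Ao} x_{n} \in \H{+}{1},
\qquad
p_{0,n} := \widetilde\Qot_{\Az} x_{n} \in \H{+}{0}.
\]
Since $\Qot_{\Ao}$ and $\widetilde\Qot_{\Az}$ are bounded on $D(\Ao)$, the sequences $(p_{1,n})$ and $(p_{0,n})$ are bounded in $\H{+}{1}$ and $\H{+}{0}$, respectively. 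The two compact embeddings $\H{+}{1} \cpt \H{}{1}$ and $\H{+}{0} \cpt \H{}{0}$ then allow me to pass to a common subsequence, still denoted by $(x_{n})$, along which $(p_{1,n})$ converges (hence is Cauchy) in $\H{}{1}$ and $(p_{0,n})$ converges in $\H{}{0}$.

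It remains to show that $(\Az p_{0,n})$ is Cauchy in $\H{}{1}$, and for this I would use the standard adjoint trick. Setting $p_{0,nm} := p_{0,n} - p_{0,m}$, $p_{1,nm} := p_{1,n} - p_{1,m}$, $x_{nm} := x_{n} - x_{m}$, and using $\Az p_{0,nm} = x_{nm} - p_{1,nm}$, the key computation is
\begin{align*}
\norm{\Az p_{0,nm}}_{\H{}{1}}^{2}
&= \scp{\Az p_{0,nm}}{x_{nm}}_{\H{}{1}} - \scp{\Az p_{0,nm}}{p_{1,nm}}_{\H{}{1}}\\
&= \scp{p_{0,nm}}{\Azs x_{nm}}_{\H{}{0}} - \scp{\Az p_{0,nm}}{p_{1,nm}}_{\H{}{1}},
\end{align*}
where the adjoint pairing in the second line is legitimate because $\H{+}{0} \subset D(\Az)$ (implicit in the definition of a regular decomposition) and $x_{n}, x_{m} \in D(\Azs)$ by assumption. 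Cauchy--Schwarz, combined with the uniform bounds on $\norm{\Azs x_{nm}}_{\H{}{0}}$ (from $(x_{n}) \subset D(\Azs)$ being bounded) and on $\norm{\Az p_{0,nm}}_{\H{}{1}} \le \norm{x_{nm}}_{\H{}{1}} + \norm{p_{1,nm}}_{\H{}{1}}$, yields
\[
\norm{\Az p_{0,nm}}_{\H{}{1}}^{2} \le C\,\norm{p_{0,nm}}_{\H{}{0}} + C\,\norm{p_{1,nm}}_{\H{}{1}},
\]
and both quantities on the right tend to $0$ along the extracted subsequence. Hence $(\Az p_{0,n})$ is Cauchy in $\H{}{1}$, and so is $(x_{n})=(p_{1,n}+\Az p_{0,n})$, which is the desired compactness.

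The argument uses neither the complex property $\Ao\Az \subset 0$ nor any exactness hypothesis; only the bounded decomposition operators together with the two compact embeddings enter. The only delicate point is the adjoint move, which is licensed exclusively by $x_{n} \in D(\Azs)$ and the implicit inclusion $\H{+}{0} \subset D(\Az)$ built into the definition of a regular decomposition. I do not anticipate any real obstacle here: this is the standard adjoint-plus-Rellich scheme, and the notion of regular decomposition has been tailored precisely so that it goes through.
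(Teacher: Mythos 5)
Your proof is correct and follows essentially the same strategy as the paper's: decompose via the regular decomposition operators, extract subsequences by the compact embeddings and Rellich, then use the adjoint identity $\scp{\Az p_{0,nm}}{x_{nm}}_{\H{}{1}} = \scp{p_{0,nm}}{\Azs x_{nm}}_{\H{}{0}}$ to absorb the $\Az$ term. The only cosmetic difference is that the paper tests $\norm{x_{n,m}}_{\H{}{1}}^{2}$ directly against the decomposition of $x_{n,m}$ (yielding the Cauchy estimate in one line), whereas you first establish that $(\Az p_{0,n})$ is Cauchy and then add back the already-convergent $(p_{1,n})$; both are sound, and the paper's version is marginally shorter since it avoids the extra uniform bound on $\norm{\Az p_{0,nm}}_{\H{}{1}}$.
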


\begin{proof}
Let $(x_{n})\subset D(\Ao)\cap D(\Azs)$ be a bounded sequence, 
i.e., there exists $c>0$ such that for all $n$ we have
$\norm{x_{n}}_{\H{}{1}}
+\norm{\Ao x_{n}}_{\H{}{2}}
+\norm{\Azs x_{n}}_{\H{}{0}}
\leq c$.
By assumption we decompose 
$x_{n}=p_{1,n}+\Az p_{0,n}$ with $p_{1,n}\in\H{+}{1}$
and $p_{0,n}\in\H{+}{0}$ satisfying
$\norm{p_{1,n}}_{\H{+}{1}}
+\norm{p_{0,n}}_{\H{+}{0}}
\leq c\norm{x_{n}}_{D(\Ao)}
\leq c.$
Hence $(p_{\ell,n})\subset\H{+}{\ell}$ is bounded in $\H{+}{\ell}$, $\ell=0,1$,
and thus we can extract convergent subsequences, again denoted by $(p_{\ell,n})$,
such that $(p_{\ell,n})$ are convergent in $\H{}{\ell}$, $\ell=0,1$. 
Then with $x_{n,m}:=x_{n}-x_{m}$ and $p_{\ell,n,m}:=p_{\ell,n}-p_{\ell,m}$ we get
\begin{align*}
\norm{x_{n,m}}_{\H{}{1}}^2
=\scp{x_{n,m}}{p_{1,n,m}+\Az p_{0,n,m}}_{\H{}{1}}
&=\scp{x_{n,m}}{p_{1,n,m}}_{\H{}{1}}
+\scp{\Azs x_{n,m}}{p_{0,n,m}}_{\H{}{0}}\\
&\leq c\big(\norm{p_{1,n,m}}_{\H{}{1}}
+\norm{p_{0,n,m}}_{\H{}{0}}\big),
\end{align*}
which shows that $(x_{n})$ is a Cauchy sequence in $\H{}{1}$.
\end{proof}  

There is a dual version of Theorem \ref{cptembmaintheo}.

\begin{cor}[Compact embedding by regular decompositions]
\label{cptembmaintheodual}
Let $D(\Azs)=\H{+}{1}+\Aos\H{+}{2}$ be a regular decomposition.
Then $D(\Ao)\cap D(\Azs)\cpt\H{}{1}$ is compact.
\end{cor}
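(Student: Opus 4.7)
The plan is to mirror the proof of Theorem \ref{cptembmaintheo} essentially verbatim, swapping the roles of $\Ao$/$\Az$ and $\Azs$/$\Aos$, and exploiting the symmetry $(\Aos)^{*}=\Ao$ built into the Hilbert space adjoint. The regular decomposition now applies to $D(\Azs)$, so given a bounded sequence $(x_{n})\subset D(\Ao)\cap D(\Azs)$, I would decompose each $x_{n}\in D(\Azs)$ as $x_{n}=p_{1,n}+\Aos p_{2,n}$ with $p_{1,n}\in\H{+}{1}$ and $p_{2,n}\in\H{+}{2}$, where boundedness of the decomposition operators (analogues of $\Qot_{\Azs}$ and $\widetilde\Qot_{\Aos}$) yields a uniform bound on $\norm{p_{1,n}}_{\H{+}{1}}+\norm{p_{2,n}}_{\H{+}{2}}$ in terms of $\norm{x_{n}}_{D(\Azs)}\le c$.

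Next I would invoke the two compact embeddings $\H{+}{1}\cpt\H{}{1}$ and $\H{+}{2}\cpt\H{}{2}$ from the definition of a regular decomposition to pass (by a standard diagonal argument) to a subsequence, still denoted $(p_{1,n})$ and $(p_{2,n})$, that is Cauchy in $\H{}{1}$ and $\H{}{2}$, respectively. Writing $x_{n,m}:=x_{n}-x_{m}$ and $p_{\ell,n,m}:=p_{\ell,n}-p_{\ell,m}$, the key computation is
\begin{align*}
\norm{x_{n,m}}_{\H{}{1}}^{2}
=\scp{x_{n,m}}{p_{1,n,m}+\Aos p_{2,n,m}}_{\H{}{1}}
&=\scp{x_{n,m}}{p_{1,n,m}}_{\H{}{1}}+\scp{\Ao x_{n,m}}{p_{2,n,m}}_{\H{}{2}}\\
&\le c\bigl(\norm{p_{1,n,m}}_{\H{}{1}}+\norm{p_{2,n,m}}_{\H{}{2}}\bigr),
\end{align*}
where the crucial move from the first to the second line is the adjointness identity $\scp{x_{n,m}}{\Aos p_{2,n,m}}_{\H{}{1}}=\scp{\Ao x_{n,m}}{p_{2,n,m}}_{\H{}{2}}$, which is legitimate precisely because $x_{n,m}\in D(\Ao)$ (here is where the assumed membership in $D(\Ao)\cap D(\Azs)$, rather than just $D(\Azs)$, is used) and $p_{2,n,m}\in\H{+}{2}\subset\H{}{2}=D(\text{source of }\Aos)$, with the bound in the final step coming from $\norm{\Ao x_{n,m}}_{\H{}{2}}\le 2c$. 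This shows $(x_{n})$ is Cauchy in $\H{}{1}$.

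There is essentially no serious obstacle: the only subtlety worth flagging is the asymmetry between the two versions, namely that in Theorem \ref{cptembmaintheo} the boundedness of $\Azs x_{n}$ is used to handle the $\Az p_{0,n,m}$ term, whereas here dually it is the boundedness of $\Ao x_{n}$ that absorbs the $\Aos p_{2,n,m}$ term. Consequently one needs $(x_{n})$ to lie in $D(\Ao)\cap D(\Azs)$ for the adjoint pairing to make sense and deliver a uniform bound, which is exactly the hypothesis of the embedding whose compactness is being established. Everything else — extraction of subsequences, the final Cauchy estimate — is identical to the primal proof.
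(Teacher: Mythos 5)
Your argument is correct and coincides with the paper's: the paper simply remarks that the corollary "follows by duality or can be done analogously to the proof of Theorem \ref{cptembmaintheo}" and records precisely the crucial estimate you derive. One minor wording point: for the adjointness step you need $p_{2,n}\in\H{+}{2}\subset D(\Aos)$ (which is implicit in the regular decomposition, since otherwise $\Aos p_{2,n}$ would not be defined), not merely $p_{2,n}\in\H{}{2}$ as your parenthetical "$\H{}{2}=D(\text{source of }\Aos)$" might suggest; but this does not affect the substance of the proof.
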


\begin{proof}
The proof follows by duality or it can be done
analogously to the proof of Theorem \ref{cptembmaintheo} with
\begin{align*}
\norm{x_{n,m}}_{\H{}{1}}^2
=\scp{x_{n,m}}{p_{1,n,m}+\Aos p_{2,n,m}}_{\H{}{1}}
&=\scp{x_{n,m}}{p_{1,n,m}}_{\H{}{1}}
+\scp{\Ao x_{n,m}}{p_{2,n,m}}_{\H{}{2}}\\
&\leq c\big(\norm{p_{1,n,m}}_{\H{}{1}}
+\norm{p_{2,n,m}}_{\H{}{2}}\big),
\end{align*}
as crucial estimate.
\end{proof}  

As we have seen, regular potential operators lead quite naturally to regular decompositions if the Hilbert complex is exact. 
Conversely and quite generally, regular decompositions can always be used to construct regular potential operators.

\begin{cor}
\label{regdecompregpot}
Let $D(\Ao)=\H{+}{1}+\Az\H{+}{0}$ be a regular decomposition.
Then the operator
\[
  \Pot_{\Ao} = \Qot_{\Ao} \cA_{1}^{-1}
  \colon R(\Ao) \To \H{+}{1}
\]
is well-defined and a regular potential operator associated to $ \Ao$.
\end{cor}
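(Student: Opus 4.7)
The plan is to use Theorem \ref{cptembmaintheo} to upgrade the regular decomposition to a compact embedding, extract closed range of $\Ao$ from the FA-ToolBox, and then verify that $\Pot_{\Ao}=\Qot_{\Ao}\cA_{1}^{-1}$ is a bounded right inverse of $\Ao$ by a short direct computation that exploits only the complex property $\Ao\Az\subset0$. The one analytic input required is boundedness of $\cA_{1}^{-1}$, viewed as a map from $R(\Ao)\subset\H{}{2}$ (with the $\H{}{2}$-norm) into $D(\cA_{1})\subset D(\Ao)$ (with the graph norm); everything else is purely algebraic.

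First I would observe that the hypothesis of a regular decomposition is precisely the hypothesis of Theorem \ref{cptembmaintheo}, so the compact embedding $D(\Ao)\cap D(\Azs)\cpt\H{}{1}$ holds automatically. By the FA-ToolBox results cited in the introduction, this compact embedding implies that $R(\Ao)$ is closed, and hence by Remark \ref{rem:helmtypedeoc1} the reduced operator $\cA_{1}\colon D(\cA_{1})\to R(\Ao)$ is a topological isomorphism. In particular, $\cA_{1}^{-1}\colon R(\Ao)\to D(\cA_{1})$ is bounded in the sense above.

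Since $\Qot_{\Ao}\colon D(\Ao)\to\H{+}{1}$ is already bounded by Definition \ref{def:decompdef}, the composition $\Pot_{\Ao}=\Qot_{\Ao}\cA_{1}^{-1}\colon R(\Ao)\to\H{+}{1}$ is well-defined and bounded. To confirm the right-inverse identity $\Ao\Pot_{\Ao}=\id_{R(\Ao)}$, I would fix $y\in R(\Ao)$ and let $x:=\cA_{1}^{-1}y\in D(\cA_{1})\subset D(\Ao)$, so that $\Ao x=y$ by construction. The regular decomposition \eqref{regdecomp} applied to $x$ gives $x=\Qot_{\Ao}x+\Az\widetilde\Qot_{\Az}x$, and applying $\Ao$ together with $\Ao\Az\subset0$ yields
\[
y=\Ao x=\Ao\Qot_{\Ao}x+\Ao\Az\widetilde\Qot_{\Az}x=\Ao\Qot_{\Ao}x=\Ao\Pot_{\Ao}y,
\]
as required. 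The compact embedding $\H{+}{1}\cpt\H{}{1}$ is built directly into Definition \ref{def:decompdef}, so $\Pot_{\Ao}$ is regular.

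The only delicate point is the upgrade from the regular decomposition to closed range of $\Ao$, which is what turns the purely algebraic right-inverse identity into a bounded-operator statement. Without that upgrade, $\cA_{1}^{-1}$ would merely be a linear bijection with no continuity with respect to the $\H{}{2}$-norm on $R(\Ao)$, and the candidate potential operator would fail to be bounded. Routing this step through Theorem \ref{cptembmaintheo} and the standard FA-ToolBox implication \emph{compactness $\Rightarrow$ closed range} cleanly removes the obstruction and reduces the rest of the argument to a one-line calculation.
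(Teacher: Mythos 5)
Your proof is correct and follows essentially the same route as the paper: closed range of $\Ao$ is obtained from the regular decomposition via Theorem \ref{cptembmaintheo} and the FA-ToolBox, boundedness of $\cA_1^{-1}$ then makes $\Pot_{\Ao}$ well-defined, and the right-inverse identity is a one-line consequence of applying $\Ao$ to the decomposition \eqref{regdecomp} and using $\Ao\Az\subset0$. The only cosmetic difference is that you verify the identity pointwise on $y\in R(\Ao)$ while the paper writes it directly as an operator identity.
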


\begin{proof}
From Theorem \ref{cptembmaintheo} and 
the FA-ToolBox, e.g., \cite[Theorem 2.9]{paulyzulehner2019a}, 
it follows that $R(\Ao)$ is closed. 
Therefore, the reduced operator $\cAo$ of $\Ao$ is linear and bijective with a bounded inverse, 
which makes the operator $\Pot_{\Ao}$ well-defined. 
Let $x\in D(\Ao)$. Applying $\Ao$ to the regular decomposition
$$x=\Qot_{\Ao}x+\Az\widetilde\Qot_{\Az}x$$
yields $\Ao x=\Ao\Qot_{\Ao}x$. Therefore,
$\Ao\Pot_{\Ao}=\Ao\Qot_{\Ao}\cA_{1}^{-1}=\Ao\cA_{1}^{-1}=\id_{R(\Ao)}$.
\end{proof}

Corollary \ref{regdecompregpot} admits a dual version as well, 
cf.~Corollary \ref{cptembmaintheodual}.

\begin{cor}
\label{regdecompregpotdual}
Let $D(\Azs)=\H{+}{1}+\Aos\H{+}{2}$ be a regular decomposition.
Then the operator
\[
  \Pot_{\Azs} = \Qot_{\Azs} (\cAzs)^{-1}
  \colon R(\Azs) \To \H{+}{1}
\]
is well-defined and a regular potential operator associated to $ \Azs$.
\end{cor}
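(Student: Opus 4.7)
The plan is to mirror the proof of Corollary \ref{regdecompregpot}, replacing the roles of $\Az, \Ao$ by $\Aos, \Azs$, and substituting Corollary \ref{cptembmaintheodual} for Theorem \ref{cptembmaintheo}. The point is that General Assumption \ref{genass2} guarantees the dual complex property $\Azs \Aos \subset 0$, which is exactly what is needed to make the construction work verbatim on the adjoint side.

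First I would use the hypothesis that $D(\Azs) = \H{+}{1} + \Aos \H{+}{2}$ is a regular decomposition to invoke Corollary \ref{cptembmaintheodual}, obtaining the compact embedding $D(\Ao) \cap D(\Azs) \cpt \H{}{1}$. By the FA-ToolBox (e.g., \cite[Theorem 2.9]{paulyzulehner2019a}), this compactness forces the ranges of $\Ao$ and $\Azs$ to be closed. Consequently, the reduced operator
$$\cAzs : D(\cAzs) \subset \ol{R(\Ao)} \To R(\Azs)$$
is a linear bijection with bounded inverse by the closed graph theorem, so $\Pot_{\Azs} = \Qot_{\Azs} (\cAzs)^{-1}$ is well-defined. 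Since $\Qot_{\Azs}$ maps boundedly into $\H{+}{1}$ by Definition \ref{def:decompdef} and $\H{+}{1} \cpt \H{}{1}$, the operator $\Pot_{\Azs} : R(\Azs) \To \H{+}{1}$ is a regular potential operator in the sense of the definition given after Example \ref{ex:geopotop}, provided the right-inverse property holds.

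To establish $\Azs \Pot_{\Azs} = \id_{R(\Azs)}$, fix $z \in R(\Azs)$ and set $y := (\cAzs)^{-1} z \in D(\cAzs) \subset D(\Azs)$, so that $\Azs y = z$. Applying the regular decomposition to $y$ gives
$$y = \Qot_{\Azs} y + \Aos \widetilde\Qot_{\Aos} y,$$
and applying $\Azs$ to both sides, using $\Azs \Aos \subset 0$ from General Assumption \ref{genass2}, yields $\Azs y = \Azs \Qot_{\Azs} y$. Therefore
$$\Azs \Pot_{\Azs} z = \Azs \Qot_{\Azs} (\cAzs)^{-1} z = \Azs y = z,$$
which finishes the argument.

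I do not expect a real obstacle here: the proof is structurally identical to that of Corollary \ref{regdecompregpot} via duality, and the only substantive input beyond bookkeeping is the complex property on the adjoint side, which is already built into General Assumption \ref{genass2}. The one spot that merits care is making sure the compositions land in the correct spaces (in particular that $(\cAzs)^{-1}$ outputs into $D(\Azs)$ so that $\Qot_{\Azs}$ may be applied), but this is immediate from the definition of the reduced operator.
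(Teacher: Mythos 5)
Your proof is correct and is exactly the dualization the paper intends when it remarks that Corollary \ref{regdecompregpot} ``admits a dual version,'' so you and the paper take essentially the same route. One small slip worth fixing: the domain of the reduced operator $\cAzs$ sits inside $\ol{R(\Az)}=\ol{R((\Azs)^{*})}=N(\Azs)^{\bot_{\H{}{1}}}$, not inside $\ol{R(\Ao)}$; this does not affect the argument, since all you actually need is $D(\cAzs)\subset D(\Azs)$ so that $\Qot_{\Azs}$ may be applied to $(\cAzs)^{-1}z$.
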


\begin{rem}
The decompositions \eqref{stableDecom2} and \eqref{regdecomp} translate to the identities
\begin{align*}
  \Pot_{\Ao} \Ao  + \Az \Pot_{\Az} (\id_{D(\Ao)} - \Pot_{\Ao} \Ao) & = \id_{D(\Ao)},\\
  \Qot_{\Ao} + \Az \widetilde \Qot_{\Az} & = \id_{D(\Ao)}.
\end{align*}
\end{rem}

\subsection{Sobolev Spaces}
\label{sec:Sobolev}

Let $m\in\nz$.
We have to introduce several Sobolev spaces.
Generally, spaces of scalar functions will be denoted by standard upper-case letters,
spaces of vector fields by uppercase boldface letters,
and spaces of tensor fields by calligraphic boldface letters.
For scalar spaces we have the standard notations $\L{2}{}(\om)$,
$\L{2}{\bot}(\om):=\L{2}{}(\om)\cap\reals^{\bot_{\L{2}{}(\om)}}$,
and $\H{m}{}(\om)$, as well as for vector fields
$$\vL{2}{}(\om),\quad
\vL{2}{\bot}(\om)=\vL{2}{}(\om)\cap(\reals^{3})^{\bot_{\vL{2}{}(\om)}},\quad
\vH{m}{}(\om),\quad
\vH{}{}(\rot,\om),\quad
\vH{}{}(\div,\om).$$
Corresponding Sobolev spaces of tensor fields are 
$\tL{2}{}(\om)$, $\tH{m}{}(\om)$, $\tH{}{}(\Rot,\om)$, $\tH{}{}(\Div,\om)$.
Symmetric tensor fields are introduced by
\begin{align*}
\tL{2}{\bbS}(\om)
&:=\big\{\tM\in\tL{2}{}(\om)\,:\,\tM=\tM^{\top}\big\},
&
\tH{}{\bbS}(\Rot,\om)
&:=\tH{}{}(\Rot,\om)\cap\tL{2}{\bbS}(\om),\\
\tH{m}{\bbS}(\om)
&:=\tH{m}{}(\om)\cap\tL{2}{\bbS}(\om),
&
\tH{}{\bbS}(\Div,\om)
&:=\tH{}{}(\Div,\om)\cap\tL{2}{\bbS}(\om).
\end{align*}
Furthermore, we need a few Sobolev spaces suited for the elasticity complex, namely
\begin{align*}
\vH{}{}(\na,\om)
&:=\big\{\vv\in\vL{2}{}(\om)\,:\,\na\vv\in\tL{2}{}(\om)\big\}
=\vH{1}{}(\om),\\
\vH{}{}(\symna,\om)
&:=\big\{\vv\in\vL{2}{}(\om)\,:\,\symna\vv\in\tL{2}{}(\om)\big\},\\
\tH{}{}(\RotRott,\om)
&:=\big\{\tM\in\tL{2}{}(\om)\,:\,\RotRott\tM \in\tL{2}{}(\om)\big\},\\
\tH{}{\bbS}(\RotRott,\om)
&:=\tH{}{}(\RotRott,\om)\cap\tL{2}{\bbS}(\om).
\end{align*}
Note that for $\tM\in\tH{}{\bbS}(\RotRott,\om)$ we have
$\RotRott\tM\in\tL{2}{\bbS}(\om)$, see Lemma \ref{appformulasproof} (vi').

Homogeneous boundary conditions are always introduced in the strong form
by closure of the respective test fields 
$\Cc{\infty}{}(\om)$, $\vCc{\infty}{}(\om)$, $\tCc{\infty}{}(\om)$, $\tCc{\infty}{\bbS}(\om)$ 
in the respective Sobolev norm, i.e.,
\begin{align*}
\Hc{m}{}(\om)
&:=\ol{\Cc{\infty}{}(\om)}^{\H{m}{}(\om)},
&
\vHc{m}{}(\om)
&:=\ol{\vCc{\infty}{}(\om)}^{\vH{m}{}(\om)},\\
\tHc{m}{}(\om)
&:=\ol{\tCc{\infty}{}(\om)}^{\tH{m}{}(\om)},
&
\tHc{m}{\bbS}(\om)
&:=\ol{\tCc{\infty}{\bbS}(\om)}^{\tH{m}{}(\om)},\\
\vHc{}{}(\na,\om)
&:=\ol{\vCc{\infty}{}(\om)}^{\vH{}{}(\na,\om)}
=\vHc{1}{}(\om),
&
\vHc{}{}(\symna,\om)
&:=\ol{\vCc{\infty}{}(\om)}^{\vH{}{}(\symna,\om)},\\
\vHc{}{}(\rot,\om)
&:=\ol{\vCc{\infty}{}(\om)}^{\vH{}{}(\rot,\om)},
&
\vHc{}{}(\div,\om)
&:=\ol{\vCc{\infty}{}(\om)}^{\vH{}{}(\div,\om)},\\
\tHc{}{}(\Rot,\om)
&:=\ol{\tCc{\infty}{}(\om)}^{\tH{}{}(\Rot,\om)},
&
\tHc{}{}(\Div,\om)
&:=\ol{\tCc{\infty}{}(\om)}^{\tH{}{}(\Div,\om)},\\
\tHc{}{\bbS}(\Rot,\om)
&:=\ol{\tCc{\infty}{\bbS}(\om)}^{\tH{}{}(\Rot,\om)},
&
\tHc{}{\bbS}(\Div,\om)
&:=\ol{\tCc{\infty}{\bbS}(\om)}^{\tH{}{}(\Div,\om)},\\
\tHc{}{}(\RotRott,\om)
&:=\ol{\tCc{\infty}{}(\om)}^{\tH{}{}(\RotRott,\om)},
&
\tHc{}{\bbS}(\RotRott,\om)
&:=\ol{\tCc{\infty}{\bbS}(\om)}^{\tH{}{}(\RotRott,\om)}.
\end{align*}
We use the convention $\H{}{}(\om)=\H{0}{}(\om)=\L{2}{}(\om)=\Hc{0}{}(\om)=\Hc{}{}(\om)$
and the same for the vector and tensor valued spaces.

As usual $\H{-m}{}(\om)$ denotes the dual of $\Hc{m}{}(\om)$,
and we introduce $\vH{-m}{}(\om)$, $\tH{-m}{}(\om)$, and $\tH{-m}{\bbS}(\om)$ as the dual of $\vHc{m}{}(\om)$, $\tHc{m}{}(\om)$, and $\tHc{m}{\bbS}(\om)$, respectively.

\subsection{Operators and Complexes}
\label{sec:OpSobolev}

Generally, the restriction of a differential operator $\A$ to its domain of definition of fields 
with homogeneous boundary conditions is denoted by $\mathring{\A}$, e.g., $\nac$. 
For a differential operator $\A$ acting on tensor fields, its restriction to spaces 
of symmetric tensor fields is denoted by $\A_{\bbS}$, e.g., $\RotRotst$ or $\Divcs$.

The classical primal and dual de Rham complexes (of unbounded operators) are given by
\begin{equation*} 
\begin{gathered}
\xymatrixcolsep{9ex}
\xymatrix{
\L{2}{}(\om) \ar[r]^-{\nac} &
\vL{2}{}(\om) \ar[r]^-{\rotc} &
\vL{2}{}(\om) \ar[r]^-{\divc} &
\L{2}{}(\om),
}\\
\xymatrixcolsep{9ex}
\xymatrix{
\L{2}{}(\om) & \ar[l]_-{-\div} 
\vL{2}{}(\om) & \ar[l]_{\rot} 
\vL{2}{}(\om) & \ar[l]_-{-\na} 
\L{2}{}(\om)
}
\end{gathered}
\end{equation*}
with the densely defined and closed (unbounded) linear operators 
and their adjoints 
\begin{align*}
\Az:=\nac:D(\nac)=\Hc{1}{}(\om)\subset\L{2}{}(\om)
&\to\vL{2}{}(\om);
&u&\mapsto\na u,\\
\Ao:=\rotc:D(\rotc)=\vHc{}{}(\rot,\om)\subset\vL{2}{}(\om)
&\to\vL{2}{}(\om);
&\vv&\mapsto\rot\vv,\\
\At:=\divc:D(\divc)=\vHc{}{}(\div,\om)\subset\vL{2}{}(\om)
&\to\L{2}{}(\om);
&\vv&\mapsto\div\vv,\\
\Azs=-\div:D(\div)=\vH{}{}(\div,\om)\subset\vL{2}{}(\om)
&\to\L{2}{}(\om);
&\vv&\mapsto-\div\vv,\\
\Aos=\rot:D(\rot)=\vH{}{}(\rot,\om)\subset\vL{2}{}(\om)
&\to\vL{2}{}(\om);
&\vv&\mapsto\rot\vv,\\
\Ats=-\na:D(\na)=\H{1}{}(\om)\subset\L{2}{}(\om)
&\to\vL{2}{}(\om);
&u&\mapsto-\na u.
\end{align*} 

The classical primal and dual elasticity complexes (of unbounded operators) are given by
\begin{equation*} 
\begin{gathered}
\xymatrixcolsep{9ex}
\xymatrix{
\vL{2}{}(\om) \ar[r]^-{\symnac} &
\tL{2}{\bbS}(\om) \ar[r]^-{\RotRotcst} &
\tL{2}{\bbS}(\om) \ar[r]^-{\Divcs} &
\vL{2}{}(\om),
}\\
\xymatrixcolsep{9ex}
\xymatrix{
\vL{2}{}(\om) & \ar[l]_-{-\Divs} 
\tL{2}{\bbS}(\om) & \ar[l]_{\RotRotst} 
\tL{2}{\bbS}(\om) & \ar[l]_-{-\symna} 
\vL{2}{}(\om)
}
\end{gathered}
\end{equation*}
with the densely defined and closed (unbounded) operators and there adjoints
\begin{align*}
\Az:=\symnac:D(\symnac)=\vHc{1}{}(\om)\subset\vL{2}{}(\om)
&\to\tL{2}{\bbS}(\om);
&\vv&\mapsto\symna\vv,\\
\Ao:=\RotRotcst:D(\RotRotcst)=\tHc{}{\bbS}(\RotRott,\om)\subset\tL{2}{\bbS}(\om)
&\to\tL{2}{\bbS}(\om);
&\tM&\mapsto\RotRott\tM,\\
\At:=\Divcs:D(\Divcs)=\tHc{}{\bbS}(\Div,\om)\subset\tL{2}{\bbS}(\om)
&\to\vL{2}{}(\om);
&\tN&\mapsto\Div\tN,\\
\Azs=-\Divs:D(\Divs)=\tH{}{\bbS}(\Div,\om)\subset\tL{2}{\bbS}(\om)
&\to\vL{2}{}(\om);
&\tN&\mapsto-\Div\tN,\\
\Aos=\RotRotst:D(\RotRotst)=\tH{}{\bbS}(\RotRott,\om)\subset\tL{2}{\bbS}(\om)
&\to\tL{2}{\bbS}(\om);
&\tM&\mapsto\RotRott\tM,\\
\Ats=-\symna:D(\symna)=\vH{1}{}(\om)\subset\vL{2}{}(\om)
&\to\tL{2}{\bbS}(\om);
&\vv&\mapsto-\symna\vv.
\end{align*}
Note that we have by the Korn inequalities
\begin{align*}
D(\symnac)&=\vHc{}{}(\symna,\om)=\vHc{1}{}(\om)=\vHc{}{}(\na,\om)=D(\nac),\\
D(\symna)&=\vH{}{}(\symna,\om)=\vH{1}{}(\om)=\vH{}{}(\na,\om)=D(\na).
\end{align*}

\section{The De Rham Complex}
\label{ssec:DeRham}

In this section we recall those basic properties of the de Rham complex, which are needed in the subsequent section for the analysis of the elasticity complex.

The classical primal and dual de Rham domain complexes read in full length
\begin{equation}
\label{derhamcomp2}
\begin{gathered}
\xymatrixcolsep{4ex}\xymatrixrowsep{1ex}
\xymatrix{
\{0\} \ar[r]^-{\iota_{\{0\}}} & 
\Hc{1}{}(\om) \ar[r]^-{\nac} & \vHc{}{}(\rot,\om) \ar[r]^-{\rotc} & \vHc{}{}(\div,\om) \ar[r]^-{\divc} & \L{2}{}(\om) 
\ar[r]^-{\pi_{\reals}} & \reals, 
\\
\{0\} & \ar[l]_-{\pi_{\{0\}}} 
\mathsf{L}^{2}(\om) & 
\ar[l]_-{-\div} \vH{}{}(\div,\om) & 
\ar[l]_-{\rot} \vH{}{}(\rot,\om) & 
\ar[l]_-{-\na} \H{1}{}(\om) &
\ar[l]_-{\iota_{\reals}} \reals.
}
\end{gathered}
\end{equation}
Here $\iota_{\{0\}}$, $\pi_{\{0\}}$ and $\iota_{\reals}$, $\pi_{\reals}$
denote the canonical embeddings and $\L{2}{}(\om)$-orthogonal projections 
of/on the finite-dimensional subspaces $\{0\}$ and $\reals$, respectively.
These complexes are Hilbert complexes.
The corresponding cohomology groups
$$\vHarm{}{D}(\om):=N(\rotc)\cap N(\div),\qquad
\vHarm{}{N}(\om):=N(\divc)\cap N(\rot)$$
are called Dirichlet and Neumann fields, respectively.
For a derivation of the de Rham complex by operator theoretical arguments, see \cite{paulyzulehner2020aarxiv}.

More generally, we consider the following variants of the primal and dual de Rham complexes (of unbounded operators) for $m\in\nz$ and $n\in\z$:
\begin{equation*} 
\xymatrixcolsep{7ex}\xymatrixrowsep{1ex}
\xymatrix{
\{0\} \ar[r]^-{\iota_{\{0\}}} & 
\Hc{m}{}(\om) \ar[r]^-{\nac} &
\vHc{m}{}(\om) \ar[r]^-{\rotc} &
\vHc{m}{}(\om) \ar[r]^-{\divc} &
\Hc{m}{}(\om)
\ar[r]^-{\pi_{\reals}} & \reals, 
\\
\{0\} & \ar[l]_-{\pi_{\{0\}}} 
\H{n}{}(\om) & \ar[l]_-{-\div} 
\vH{n}{}(\om) & \ar[l]_{\rot} 
\vH{n}{}(\om) & \ar[l]_-{-\na} 
\H{n}{}(\om) &
\ar[l]_-{\iota_{\reals}} \reals
}
\end{equation*}
with the associated domain complexes
\begin{equation}
\label{derhamcomplm}
\begin{gathered}
\xymatrixcolsep{6ex}\xymatrixrowsep{1ex}
\xymatrix{
\{0\} \ar[r]^-{\iota_{\{0\}}} & 
\Hc{m+1}{}(\om) \ar[r]^-{\nac} & 
\vHc{m}{}(\rot,\om) \ar[r]^-{\rotc} & 
\vHc{m}{}(\div,\om) \ar[r]^-{\divc} & 
\H{m}{}(\om) 
\ar[r]^-{\pi_{\reals}} & \reals, 
\\
\{0\} & \ar[l]_-{\pi_{\{0\}}} 
\H{n}{}(\om) & 
\ar[l]_-{-\div} \vH{n}{}(\div,\om) & 
\ar[l]_-{\rot} \vH{n}{}(\rot,\om) & 
\ar[l]_-{-\na} \H{n+1}{}(\om) &
\ar[l]_-{\iota_{\reals}} \reals
}
\end{gathered}
\end{equation}
and the Sobolev spaces
\begin{align*}
\vHc{m}{}(\rot,\om)
&:=\big\{\vv\in D(\rotc)\cap\vHc{m}{}(\om)\,:\,\rot\vv\in\vHc{m}{}(\om)\big\},\\
\vH{n}{}(\rot,\om)
&:=\big\{\vv\in \vH{n}{}(\om)\,:\,\rot\vv\in\vH{n}{}(\om)\big\},\\
\vHc{m}{}(\div,\om)
&:=\big\{\vv\in D(\divc)\cap\vHc{m}{}(\om)\,:\,\div\vv\in\Hc{m}{}(\om)\big\},\\
\vH{n}{}(\div,\om)
&:=\big\{\vv\in \vH{n}{}(\om)\,:\,\div\vv\in\H{n}{}(\om)\big\}.
\end{align*}
Note that in the definition of the spaces with boundary conditions the intersection with the respective domain of definition is superfluous for $m\ge 1$.

For the next lemma we introduce the following notation.
Similar to $\L{2}{\bot}(\om)$ we set for $m\in\nz$
$$\Hc{m}{\bot}(\om):=\Hc{m}{}(\om)\cap\reals^{\bot_{\L{2}{}(\om)}}.$$
We introduce kernel Sobolev spaces by
\begin{align*}
\vHc{m}{0}(\rot,\om)
&:=\big\{\vv\in\vHc{m}{}(\rot,\om)\,:\,\rot\vv=0\big\},
&\vH{n}{0}(\rot,\om)
&:=\big\{\vv\in\vH{n}{}(\rot,\om)\,:\,\rot\vv=0\big\},\\
\vHc{m}{0}(\div,\om)
&:=\big\{\vv\in\vHc{m}{}(\div,\om)\,:\,\div\vv=0\big\},
&
\vH{n}{0}(\div,\om)
&:=\big\{\vv\in\vH{n}{}(\div,\om)\,:\,\div\vv=0\big\}.
\end{align*}
Note that for $m=n=0$ we have
\begin{align*}
\vHc{0}{0}(\rot,\om)&=N(\rotc),
&
\vH{0}{0}(\rot,\om)&=N(\rot),\\
\vHc{0}{0}(\div,\om)&=N(\divc),
&
\vH{0}{0}(\div,\om)&=N(\div).
\end{align*}
We often skip the upper right index $0$, e.g.,
$\vHc{}{0}(\rot,\om)=\vHc{0}{0}(\rot,\om)=N(\rotc)$.

The existence of regular potential operators for the classical de Rham complex is well known.
Here we cite results from \cite{webercompmax,costabelmcintoshgenbogovskii}, 
and \cite{paulyzulehner2019a},
see also the corresponding literature in \cite{paulyzulehner2019a}.

\begin{lem}[Regular potential operators]
\label{regpotlemclasstoptriv}
Let $m\in\nz$ and $n\in\z$,
and let $\om$ be additionally topologically trivial. 
Then there exist linear and bounded operators
\begin{align*}
\Pot_{\nac}^{m}
:\vHc{m}{0}(\rot,\om)
&\To\Hc{m+1}{}(\om),
&
\na\Pot_{\nac}^{m}&=\id_{\vHc{m}{0}(\rot,\om)},\\
\Pot_{\na}^{n}
:\vH{n}{0}(\rot,\om)
&\To\H{n+1}{}(\om),
&
\na\Pot_{\na}^{n}&=\id_{\vH{n}{0}(\rot,\om)},\\
\Pot_{\rotc}^{m}
:\vHc{m}{0}(\div,\om)
&\To\vHc{m+1}{}(\om),
&
\rot\Pot_{\rotc}^{m}&=\id_{\vHc{m}{0}(\div,\om)}\\
\Pot_{\rot}^{n}
:\vH{n}{0}(\div,\om)
&\To\vH{n+1}{}(\om),
&
\rot\Pot_{\rot}^{n}&=\id_{\vH{n}{0}(\div,\om)},\\
\Pot_{\divc}^{m}
:\Hc{m}{\bot}(\om)
&\To\vHc{m+1}{}(\om),
&
\div\Pot_{\divc}^{m}&=\id_{\Hc{m}{\bot}(\om)},
\\
\Pot_{\div}^{n}
:\H{n}{}(\om)
&\To\vH{n+1}{}(\om),
&
\div\Pot_{\div}^{n}&=\id_{\H{n}{}(\om)}.
\end{align*}
These operators are regular potential operators associated to the domain complexes \eqref{derhamcomplm}.
In particular, all ranges are closed.
\end{lem}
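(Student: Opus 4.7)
The plan is to reduce the claim to the case of a domain star-shaped with respect to a ball and then to globalise via the partition of unity fixed in Subsection \ref{sec:domains}. On a bounded Lipschitz domain that is star-shaped with respect to a ball, Costabel and McIntosh \cite{costabelmcintoshgenbogovskii} construct explicit integral operators of two flavours: regularised Poincar\'e operators $R_k$ and their Bogovskii-type duals $\widetilde R_k$. Both families are bounded $\H{s}{}\to\H{s+1}{}$ for every integer $s$ and satisfy a homotopy identity of the form $dR+Rd=\id$, where $d$ stands for the outer differential of the relevant degree ($\na$, $\rot$ or $\div$). Restricted to the kernel of the outer differential, $R_k$ is a bounded right inverse of $d$ gaining one order of regularity; $\widetilde R_k$ does the same within the homogeneous-boundary Sobolev scale $\Hc{s+1}{}$. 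This already settles the lemma whenever $\om$ itself is star-shaped.

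For a general topologically trivial strong Lipschitz $\om$, I would exploit the covering $(U_\ell)_{\ell=1}^{L}$ from Subsection \ref{sec:domains}, refined so that each patch $\om_\ell=\om\cap U_\ell$ is star-shaped with respect to an open ball. Given $\vv$ in one of the kernel spaces, I would split $\vv=\sum_\ell\varphi_\ell\vv$; each summand is supported in $\om_\ell$, so a local Poincar\'e or Bogovskii operator returns a potential $p_\ell$ with one more derivative. Since multiplication by $\varphi_\ell$ does not preserve the kernel of the outer differential (for instance $\rot(\varphi_\ell\vv)=\na\varphi_\ell\times\vv$ need not vanish), correction terms appear; a short computation shows that they lie in the kernel of the previous operator in the de Rham complex on $\om_\ell$ and can be absorbed by applying the local potential one step earlier in the sequence. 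Topological triviality of the patches $\om_\ell$ is precisely what makes this recursive correction close up. After multiplication by the secondary cutoffs $\phi_\ell$ and summation one obtains bounded global right inverses with the claimed mapping properties, boundedness being inherited from that of the local $R_k,\widetilde R_k$ and of multiplication by the smooth $\varphi_\ell,\phi_\ell$. The target $\Hc{m}{\bot}(\om)$ of $\Pot_{\divc}^m$ reflects the orthogonality $\div\vHc{}{}(\div,\om)\subset\reals^{\bot_{\L{2}{}(\om)}}$ obtained by integration by parts.

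The main obstacle will be the boundary-condition case, where the gluing through the cutoffs $\phi_\ell$ must preserve the homogeneous boundary condition on $\p\om$ and not only on the auxiliary boundaries $\p\om_\ell$ introduced by the covering. The decisive property is $\phi_\ell\in\Cc{\infty}{}(U_\ell)$ with $\phi_\ell\equiv 1$ on $\supp\varphi_\ell$, so that multiplication by $\phi_\ell$ removes the traces on $\p\om_\ell\setminus\p\om$ while leaving the traces on $\p\om\cap U_\ell$ intact; since the Bogovskii-type local potentials vanish on all of $\p\om_\ell$, the global homogeneous boundary condition on $\p\om$ is preserved. Closedness of all ranges, the ``in particular'' clause, then follows for free from the existence of bounded right inverses. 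In practice, rather than reconstructing these operators from scratch, I would appeal directly to \cite{webercompmax,costabelmcintoshgenbogovskii,paulyzulehner2019a}, where exactly these arguments are carried out in a form tailored to the de Rham complex with and without boundary conditions.
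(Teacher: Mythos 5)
The paper does not prove Lemma~\ref{regpotlemclasstoptriv}: immediately before stating it the authors write that the existence of regular potential operators for the de Rham complex is well known and cite \cite{webercompmax,costabelmcintoshgenbogovskii,paulyzulehner2019a}, leaving the lemma without a proof environment. Your final move — appealing directly to those same references — is therefore exactly what the paper does, so at the bottom line your approach agrees with the paper's.

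Your intermediate sketch, however, is looser than it looks, and it would not close up as written. After cutting off, say, $\vv\in\vHc{m}{0}(\rot,\om)$ by $\varphi_\ell$, the homotopy identity on $\om_\ell$ gives
$\varphi_\ell\vv=\na R_0(\varphi_\ell\vv)+R_1(\na\varphi_\ell\times\vv)$,
and the correction terms $R_1(\na\varphi_\ell\times\vv)$ are \emph{not} in general $\rot$-free, nor do they cancel after summation, because the $R_1$ are built from different star centres on different patches; so the phrase ``lie in the kernel of the previous operator and can be absorbed by applying the local potential one step earlier'' is not justified and the recursion has no obvious reason to terminate. There is a second difficulty in the boundary-condition cases: $\varphi_\ell\vv$ vanishes only near $\p\om_\ell\setminus\p\om$, not on all of $\p\om_\ell$, so one cannot simply apply the Bogovskii-type operator on $\om_\ell$ and expect the glued potential to sit in $\Hc{m+1}{}(\om)$; the way boundary conditions are preserved in the references is more delicate than multiplying by $\phi_\ell$ afterwards. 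These are genuine gaps in the sketch, but since the paper itself does not carry out this construction and merely cites it, they do not put you at odds with the paper; they only mean that your parenthetical reconstruction should not be read as a self-contained proof.
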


All results of Section \ref{ssec:DeRham}
extend literally to the case of the vector de Rham complex (of unbounded operators)
\begin{equation}
\label{vectorderhamcomplm}
\begin{gathered}
\xymatrixcolsep{7ex}\xymatrixrowsep{1ex}
\xymatrix{
\{0\} \ar[r]^-{\iota_{\{\boldsymbol{0}\}}} & 
\vHc{m}{}(\om) \ar[r]^-{\nac} &
\tHc{m}{}(\om) \ar[r]^-{\Rotc} &
\tHc{m}{}(\om) \ar[r]^-{\Divc} &
\vHc{m}{}(\om)
\ar[r]^-{\pi_{\reals^3}} & \reals^3, 
\\
\{0\} & \ar[l]_-{\pi_{\{\boldsymbol{0}\}}} 
\vH{n}{}(\om) & \ar[l]_-{-\Div} 
\tH{n}{}(\om) & \ar[l]_{\Rot} 
\tH{n}{}(\om) & \ar[l]_-{-\na} 
\vH{n}{}(\om) &
\ar[l]_-{\iota_{\reals^3}} \reals^3
}
\end{gathered}
\end{equation}
with the necessary notational adaptions as formulated in Sections \ref{sec:Sobolev}, \ref{sec:OpSobolev}. Observe that here $\nac$ (resp.~$\na$) denotes the Jacobian of vector fields.

\section{The Elasticity Complex}
\label{sec:ElastCompl}

The classical primal and dual elasticity domain complexes read in full length
\begin{equation}
\label{ElastComplfull}
\begin{gathered}
\xymatrixcolsep{5ex}\xymatrixrowsep{1ex}
\xymatrix{
\{\boldsymbol{0}\} \ar[r]^-{\iota_{\{\boldsymbol{0}\}}} &
\vHc{1}{}(\om) \ar[r]^-{\symnac} &
\tHc{}{\bbS}(\RotRott,\om) \ar[r]^-{\RotRotcst} &
\tHc{}{\bbS}(\Div,\om) \ar[r]^-{\Divcs} &
\vL{2}{}(\om) \ar[r]^-{\pi_{\RM}} &
\RM,
\\
\{\boldsymbol{0}\} & \ar[l]_-{\pi_{\{\boldsymbol{0}\}}}
\vL{2}{}(\om) & \ar[l]_-{-\Divs}
\tH{}{\bbS}(\Div,\om) & \ar[l]_-{\RotRotst}
\tH{}{\bbS}(\RotRott,\om) & \ar[l]_-{-\symna}
\vH{1}{}(\om) & \ar[l]_-{\iota_{\RM}}
\RM.
}
\end{gathered}
\end{equation}
Here $\iota_{\{\boldsymbol{0}\}}$, $\pi_{\{\boldsymbol{0}\}}$ and $\iota_{\RM}$, $\pi_{\RM}$
denote the canonical embeddings and $\vL{2}{}(\om)$-orthogonal projections 
of/on the finite-dimensional subspaces $\{\boldsymbol{0}\}\subset\reals^3$ and the space of rigid motions 
$$\RM=\{\tQ\vx+\vq:\tQ\in\rttt\,\mathrm{skew},\,\vq\in\rt\},\qquad
\dim\RM=6,$$
respectively.
These complexes are Hilbert complexes.
The corresponding cohomology groups
$$\tHarm{}{\bbS,D}(\om):=N(\RotRotcst)\cap N(\Divs),\qquad
\tHarm{}{\bbS,N}(\om):=N(\Divcs)\cap N(\RotRotst)$$
will be called generalized Dirichlet and Neumann tensors
of the elasticity complex, respectively.
For a comprehensive derivation of the elasticity complex by operator theoretical arguments, see \cite{paulyzulehner2020aarxiv}.

More generally, we will discuss the following variants of the primal and dual  elasticity complexes (of unbounded operators) for $m\in\nz$ and $n\in\z$:
\begin{equation*}
\xymatrixcolsep{8ex}\xymatrixrowsep{1ex}
\xymatrix{
\{\boldsymbol{0}\} \ar[r]^-{\iota_{\{\boldsymbol{0}\}}} &
\vHc{m}{}(\om) \ar[r]^-{\symnac} &
\tHc{m}{\bbS}(\om) \ar[r]^-{\RotRotcst} &
\tHc{m}{\bbS}(\om) \ar[r]^-{\Divcs} &
\vHc{m}{}(\om) \ar[r]^-{\pi_{\RM}} &
\RM,
\\
\{\boldsymbol{0}\} & \ar[l]_-{\pi_{\{\boldsymbol{0}\}}}
\vH{n}{}(\om) & \ar[l]_-{-\Divs}
\tH{n}{\bbS}(\om) & \ar[l]_-{\RotRotst}
\tH{n}{\bbS}(\om) & \ar[l]_-{-\symna}
\vH{n}{}(\om) & \ar[l]_-{\iota_{\RM}}
\RM
}
\end{equation*}
with the associated domain complexes
\begin{equation}
\label{ElastComplfullVar1}
\begin{gathered}
\xymatrixcolsep{4ex}\xymatrixrowsep{1ex}
\xymatrix{
\{\boldsymbol{0}\} \ar[r]^-{\iota_{\{\boldsymbol{0}\}}} &
\vHc{m+1}{}(\om) \ar[r]^-{\symnac} &
\tHc{m}{\bbS}(\RotRott,\om) \ar[r]^-{\RotRotcst} &
\tHc{m}{\bbS}(\Div,\om) \ar[r]^-{\Divcs} &
\vHc{m}{}(\om) \ar[r]^-{\pi_{\RM}} &
\RM,
\\
\{\boldsymbol{0}\} & \ar[l]_-{\pi_{\{\boldsymbol{0}\}}}
\vH{n}{}(\om) & \ar[l]_-{-\Divs}
\tH{n}{\bbS}(\Div,\om) & \ar[l]_-{\RotRotst}
\tH{n}{\bbS}(\RotRott,\om) & \ar[l]_-{-\symna}
\vH{n+1}{}(\om) & \ar[l]_-{\iota_{\RM}}
\RM
}
\end{gathered}
\end{equation}
and the Sobolev spaces
\begin{align*}
\tHc{m}{\bbS}(\RotRott,\om)
&:=\big\{\tM\in D(\RotRotcst)\cap\tHc{m}{\bbS}(\om)\,:\,\RotRott\tM\in\tHc{m}{\bbS}(\om)\big\},\\
\tH{n}{\bbS}(\RotRott,\om)
&:=\big\{\tM\in \tH{n}{\bbS}(\om)\,:\,\RotRott\tM\in\tH{n}{\bbS}(\om)\big\},\\
\tHc{m}{\bbS}(\Div,\om)
&:=\big\{\tN\in D(\Divcs)\cap\tHc{m}{\bbS}(\om)\,:\,\Div\tN\in\vHc{m}{}(\om)\big\},\\
\tH{n}{\bbS}(\Div,\om)
&:=\big\{\tN\in \tH{n}{\bbS}(\om)\,:\,\Div\tN\in\vH{n}{}(\om)\big\}.
\end{align*}
Note that in the definition of the spaces $\tHc{m}{\bbS}(\RotRott,\om)$ and $\tHc{m}{\bbS}(\Div,\om)$ the intersection with the domain of definition is superfluous for $m\ge 2$ and $m\ge1$, respectively.

Additionally, we will also discuss the following variants of the elasticity complexes (of unbounded operators) for $m\in\n$ and $n\in\z$:
\begin{equation*}
\xymatrixcolsep{7ex}\xymatrixrowsep{1ex}
\xymatrix{
\{\boldsymbol{0}\} \ar[r]^-{\iota_{\{\boldsymbol{0}\}}} &
\vHc{m}{}(\om) \ar[r]^-{\symnac} &
\tHc{m}{\bbS}(\om) \ar[r]^-{\RotRotcst} &
\tHc{m-1}{\bbS}(\om) \ar[r]^-{\Divcs} &
\vHc{m-1}{}(\om) \ar[r]^-{\pi_{\RM}} &
\RM,
\\
\{\boldsymbol{0}\} & \ar[l]_-{\pi_{\{\boldsymbol{0}\}}}
\vH{n-1}{}(\om) & \ar[l]_-{-\Divs}
\tH{n-1}{\bbS}(\om) & \ar[l]_-{\RotRotst}
\tH{n}{\bbS}(\om) & \ar[l]_-{-\symna}
\vH{n}{}(\om) & \ar[l]_-{\iota_{\RM}}
\RM
}
\end{equation*}
with the associated domain complexes
\begin{equation}
\label{ElastComplfullVar2}
\begin{gathered}
\xymatrixcolsep{2.4ex}\xymatrixrowsep{1ex}
\xymatrix{
\{\boldsymbol{0}\} \ar[r]^-{\iota_{\{\boldsymbol{0}\}}} &
\vHc{m+1}{}(\om) \ar[r]^-{\symnac} &
\tHc{m,m-1}{\bbS}(\RotRott,\om) \ar[r]^-{\RotRotcst} &
\tHc{m-1}{\bbS}(\Div,\om) \ar[r]^-{\Divcs} &
\vHc{m-1}{}(\om) \ar[r]^-{\pi_{\RM}} &
\RM,
\\
\{\boldsymbol{0}\} & \ar[l]_-{\pi_{\{\boldsymbol{0}\}}}
\vH{n-1}{}(\om) & \ar[l]_-{-\Divs}
\tH{n-1}{\bbS}(\Div,\om) & \ar[l]_-{\RotRotst}
\tH{n,n-1}{\bbS}(\RotRott,\om) & \ar[l]_-{-\symna}
\vH{n+1}{}(\om) & \ar[l]_-{\iota_{\RM}}
\RM
}
\end{gathered}
\end{equation}
and the slightly less regular Sobolev spaces
\begin{align*}
\tHc{m,m-1}{\bbS}(\RotRott,\om)
&:=\big\{\tM\in D(\RotRotcst)\cap\tHc{m}{\bbS}(\om)\,:\,\RotRott\tM\in\tHc{m-1}{\bbS}(\om)\big\},\\
\tH{n,n-1}{\bbS}(\RotRott,\om)
&:=\big\{\tM\in\tH{n}{\bbS}(\om)\,:\,\RotRott\tM\in\tH{n-1}{\bbS}(\om)\big\},
\end{align*}
where again the intersection with $D(\RotRotcst)$ is not needed for $m\ge2$.

For our purposes the case $n=0$ will be most important, i.e.,
$$\tH{0,-1}{\bbS}(\RotRott,\om)
=\big\{\tM\in\tL{2}{\bbS}(\om)\,:\,\RotRott\tM\in\tH{-1}{\bbS}(\om)\big\}.$$

The first variants of the elasticity complexes are the more natural extension of the classical elasticity complex, while the second variants are better adapted to the technical difficulties due to the second order operator $\RotRott$, as we shall see.

\begin{rem}[One operator]
\label{secordop}
Note that, e.g., the second order operator $\RotRotst$ is ``one'' operator 
and not a composition of the two first order operators $\Rot$ and $\Rott$. 
Similarly, the operators $\RotRotcst$ or $\symnac$, $\symna$ 
have to be understood in the same sense as one operator.
This fact is underlined by some new results about Fredholm indices presented in \cite{PW2020a}.
The differential operators are of mixed order but cannot be seen 
as leading order type with relatively compact perturbation. 
\end{rem}

We will now proceed to show the main result of this paper, 
the compact embeddings for the elasticity complex as formulated in Theorem \ref{cptembela1}. 
The proof is based on Theorem \ref{cptembmaintheo}. 
Therefore, we have to construct appropriate regular decompositions. We do this in two main steps.
\begin{itemize}
\item
In Section \ref{ssec:primdualregpot} we start with the construction of regular potential operators for the elasticity complex in the case of topologically trivial domains (Theorem \ref{regpottheotoptriv}). This immediately implies that the elasticity complex is an exact Hilbert complex in this case (Theorem \ref{maintheo-ela1}). 
Then we will use these regular potential operators for constructing regular decompositions for the elasticity complex for topologically trivial domains (Corollary \ref{regdecocortoptriv}).
\item
In Section \ref{ssec:primdualregpotgendom} we extend the results on regular decompositions for the elasticity complex to general strong Lipschitz domains by a partition of unity argument (Theorem \ref{regdecopotgenlip}).   
Based on these results it will be shown that the elasticity complexes are compact Hilbert complexes (Theorem \ref{cptembela1}). Additionally, we construct regular potentials even in this general case (Corollary \ref{regdecopotgenlip2}) by applying Corollary \ref{regdecompregpot}.
\end{itemize}

\subsection{Topologically Trivial Domains: Regular Potentials and Decompositions}
\label{ssec:primdualregpot}

For the next theorem we introduce the following notations.
Similarly to the orthogonality to $\reals$ and $\reals^{3}$ 
indicated in the Hilbert spaces $\L{2}{\bot}(\om)=\L{2}{\bot_{\reals}}(\om)$ 
and $\vL{2}{\bot}(\om)=\vL{2}{\bot_{\reals^{3}}}(\om)$, respectively, we set
$$\vL{2}{\bot_{\RM}}(\om):=\vL{2}{}(\om)\cap\RM^{\bot_{\vL{2}{}(\om)}}.$$
Let $m\in\nz$ and $n\in\z$. We introduce kernel Sobolev spaces by
\begin{align*}
\tHc{m}{\bbS,0}(\RotRott,\om)
&:=\big\{\tM\in\tHc{m}{\bbS}(\RotRott,\om)\,:\,\RotRott\tM=0\big\},\\
\tH{n}{\bbS,0}(\RotRott,\om)
&:=\big\{\tM\in\tH{n}{\bbS}(\RotRott,\om)\,:\,\RotRott\tM=0\big\},\\
\tHc{m}{\bbS,0}(\Div,\om)
&:=\big\{\tN\in\tHc{m}{\bbS}(\Div,\om)\,:\,\Div\tN=0\big\},\\
\tH{n}{\bbS,0}(\Div,\om)
&:=\big\{\tN\in\tH{n}{\bbS}(\Div,\om)\,:\,\Div\tN=0\big\}.
\end{align*}
Note that for $m=n=0$ we have
\begin{align*}
\tHc{0}{\bbS,0}(\RotRott,\om)&=N(\RotRotcst),
&
\tH{0}{\bbS,0}(\RotRott,\om)&=N(\RotRotst),\\
\tHc{0}{\bbS,0}(\Div,\om)&=N(\Divcs),
&
\tH{0}{\bbS,0}(\Div,\om)&=N(\Divs).
\end{align*}

Using the linear and bounded regular potential operators
$\Pot_{\nac}^m$, $\Pot_{\na}^n$,
$\Pot_{\rotc}^m$, $\Pot_{\rot}^n$,
$\Pot_{\divc}^m$, $\Pot_{\div}^n$ for the de Rham complex
from Lemma \ref{regpotlemclasstoptriv}  
and their counterparts
$\Pot_{\nac}^m$, $\Pot_{\na}^n$, $\Pot_{\Rotc}^m$, 
$\Pot_{\Rot}^n$, $\Pot_{\Divc}^m$, $\Pot_{\Div}^n$ for the vector de Rham complex \eqref{vectorderhamcomplm},
we start now with our first regular potentials for the elasticity complex.

\begin{theo}[Regular potential operators]
\label{regpottheotoptriv}
Let $\om$ be additionally topologically trivial
and let $m\in\nz$ and $n\in\z$. 
Then there exist linear and bounded operators
\begin{align*}
\Pot_{\symnac}^{m}
:\tHc{m}{\bbS,0}(\RotRott,\om)
&\To\vHc{m+1}{}(\om),
&
\symna\Pot_{\symnac}^{m}
&=\id_{\tHc{m}{\bbS,0}(\RotRott,\om)},\\
\Pot_{\symna}^{n}
:\tH{n}{\bbS,0}(\RotRott,\om)
&\To\vH{n+1}{}(\om),
&
\symna\Pot_{\symna}^{n}
&=\id_{\tH{n}{\bbS,0}(\RotRott,\om)},\\
\Pot_{\RotRotcst}^{m}
:\tHc{m}{\bbS,0}(\Div,\om)
&\To\tHc{m+2}{\bbS}(\om),
&
\RotRott\Pot_{\RotRotcst}^{m}
&=\id_{\tHc{m}{\bbS,0}(\Div,\om)},\\
\Pot_{\RotRotst}^{n}
:\tH{n}{\bbS,0}(\Div,\om)
&\To\tH{n+2}{\bbS}(\om),
&
\RotRott\Pot_{\RotRotst}^{n}
&=\id_{\tH{n}{\bbS,0}(\Div,\om)},\\
\Pot_{\Divcs}^{m}
:\vHc{m}{\bot_{\RM}}(\om)
&\To\tHc{m+1}{\bbS}(\om),
&
\Div\Pot_{\Divcs}^{m}
&=\id_{\vHc{m}{\bot_{\RM}}(\om)},\\
\Pot_{\Divs}^{n}
:\vH{n}{}(\om)
&\To\tH{n+1}{\bbS}(\om),
&
\Div\Pot_{\Divs}^{n}
&=\id_{\vH{n}{}(\om)}
\end{align*}
with
\begin{align*}
\Pot_{\symnac}^{m} \tM
&=\Pot_{\nac}^{m}\big(\tM+\spn\Pot_{\nac}^{m-1}\Rott \tM \big) \qquad \text{for} \ m\geq2,\\
\Pot_{\symna}^{n} \tM
&=\Pot_{\na}^{n}\big(\tM+\spn\Pot_{\na}^{n-1}\Rott \tM \big),\\
\Pot_{\RotRotcst}^{m} \tN
&=\sym\Pot_{\Rotc}^{m+1}\big( (\Pot_{\Rotc}^{m}\tN)^{\top}-(\tr \Pot_{\Rotc}^{m}\tN) \tI \big),\\
\Pot_{\RotRotst}^{n} \tN
&=\sym\Pot_{\Rot}^{n+1}\big( (\Pot_{\Rot}^{n}\tN)^{\top}- (\tr \Pot_{\Rot}^{n}\tN) \tI \big),\\
\Pot_{\Divcs}^{m} \vv
&=\sym\big( \Pot_{\Divc}^{m} \vv-2\Rott ( \Pot_{\Divc}^{m+1} \spn^{-1}\skw \Pot_{\Divc}^{m} \vv ) \big),\\
\Pot_{\Divs}^{n} \vv
&=\sym\big( \Pot_{\Div}^{n} \vv-2\Rott ( \Pot_{\Div}^{n+1} \spn^{-1}\skw \Pot_{\Div}^{n} \vv ) \big).
\end{align*}
\end{theo}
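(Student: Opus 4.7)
The proof is a direct verification of each of the six explicit formulas. Boundedness of every $\Pot$ is immediate, since each formula is a composition of the bounded regular de Rham potentials from Lemma~\ref{regpotlemclasstoptriv} (in both its scalar version for \eqref{derhamcomplm} and its vector analogue for \eqref{vectorderhamcomplm}) with the bounded algebraic operations $\sym$, $\skw$, $\tr$, $\spn^{\pm 1}$, and transpose. The claimed Sobolev regularity of each codomain follows simply by counting how many de Rham potentials are applied; in particular, $\Pot_{\RotRotcst}^m$ composes $\Pot_{\Rotc}$ with itself and therefore gains two orders of regularity, arriving at $\tHc{m+2}{\bbS}(\om)$. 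It then remains to verify that each intermediate expression lies in the domain of the next de Rham potential and that the resulting composition is a right inverse of the correct elasticity operator.

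For $\Pot_{\symnac}^m$ with $\tM\in\tHc{m}{\bbS,0}(\RotRott,\om)$, the kernel condition $\RotRott\tM=\Rot(\Rott\tM)=0$ places $\Rott\tM$ in $\tHc{m-1}{0}(\Rot,\om)$, so $\vp:=\Pot_{\nac}^{m-1}\Rott\tM$ is a well-defined element of $\vHc{m}{}(\om)$ with $\na\vp=\Rott\tM$. Combining the Appendix~\ref{app:Formulas} identity $\Rot(\spn\vp)=(\div\vp)\tI-(\na\vp)^{\top}$ with the fact that $\tr\Rot\tM=0$ for symmetric $\tM$ (so that $\div\vp=\tr\Rott\tM=0$) and with $(\na\vp)^{\top}=(\Rott\tM)^{\top}=\Rot\tM$, one obtains $\Rot(\tM+\spn\vp)=0$. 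Applying the vector $\Pot_{\nac}^m$ then yields $\vv\in\vHc{m+1}{}(\om)$ with $\na\vv=\tM+\spn\vp$; taking the symmetric part and using skew-symmetry of $\spn\vp$ gives $\symna\vv=\tM$. The case $\Pot_{\symna}^n$ is verbatim with the hatless potentials and so covers all $n\in\z$.

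The arguments for $\Pot_{\RotRotcst}^m$ and $\Pot_{\Divcs}^m$ follow the same pattern of two nested applications of a de Rham potential separated by an algebraic correction. For $\Pot_{\RotRotcst}^m$, divergence-freeness of $\tN$ admits $\Pot_{\Rotc}^m$, giving $\tA$ with $\Rot\tA=\tN$; the algebraic correction $\tA^{\top}-(\tr\tA)\tI$ is designed, using the appendix identities for $\Rot\tA^{\top}$ and $\Rot(f\tI)$, to lie in $\tHc{m+1}{0}(\Rot,\om)$, so $\Pot_{\Rotc}^{m+1}$ can be applied; a final symmetrisation and one further algebraic identity recover $\tN$ under $\RotRott$. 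For $\Pot_{\Divcs}^m$, orthogonality of $\vv$ to constants $\reals^3\subset\RM$ admits the vector $\Pot_{\Divc}^m$, producing $\tA$ with $\Div\tA=\vv$; integration by parts against the skew-linear generators $\vx\mapsto\tQ\vx$ of $\RM$ shows that $\vw:=\spn^{-1}\skw\tA$ has mean zero precisely because $\vv\perp\RM$, so $\tB:=\Pot_{\Divc}^{m+1}\vw$ is defined; the appendix identities linking $\skw\Rott$, $\spn\Div$, and $\Div\Rott$ then show that $\sym(\tA-2\Rott\tB)$ is symmetric with divergence $\vv$. The hatless versions $\Pot_{\RotRotst}^n$ and $\Pot_{\Divs}^n$ are parallel; the $\Divs$-case requires no orthogonality because $\Pot_{\Div}^n$ itself imposes none.

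The main technical obstacle is the correct bookkeeping of the Appendix~\ref{app:Formulas} identities among $\Rot$, $\Div$, $\spn$, $\sym$, $\skw$, $\tr$, and transpose. In particular, for $\Pot_{\Divcs}^m$ the nontrivial point is to verify that orthogonality of $\vv$ to the full six-dimensional space $\RM$ of rigid motions corresponds exactly to the two solvability conditions needed for the two successive de Rham potentials: orthogonality to $\reals^3$ for the first and orthogonality to the three rotational generators for the second. The low-order cases of $\Pot_{\symnac}^m$ (where the displayed formula requires $m\geq 2$) are handled either by a direct modification of the same construction or by reducing to the hatless version.
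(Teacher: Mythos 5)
Your proposal follows the same overall strategy as the paper: apply the de Rham potentials of Lemma~\ref{regpotlemclasstoptriv} (and their vector analogues) in two stages, joined by an algebraic correction, and use the Appendix~\ref{app:Formulas} identities to verify that the intermediate field lies in the domain of the second potential and that the symmetric part of the result is the required right inverse. Your sketch of the $\symnac$ construction (for $m\geq 2$ and for the hatless $n$) and of the $\Divcs$ construction including the $\RM$-orthogonality argument is essentially the paper's argument.

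Two points need attention. First, a technical error in the $\RotRotcst$ paragraph: you claim the algebraic correction $\tA^{\top}-(\tr\tA)\tI$ lies in $\tHc{m+1}{0}(\Rot,\om)$ and that this is what permits applying $\Pot_{\Rotc}^{m+1}$. This is backwards. By Lemma~\ref{regpotlemclasstoptriv}, the domain of $\Pot_{\Rotc}^{m+1}$ is the divergence-free space $\tHc{m+1}{0}(\Div,\om)$, not a rotation-free space; and the identity that delivers this is Lemma~\ref{appformulasproof}~(v), $\skw\Rot\tM=\spn\vv$ with $2\vv=\Div\tM^{\top}-\na\tr\tM$, which gives $\Div\big(\tA^{\top}-(\tr\tA)\tI\big)=2\spn^{-1}\skw\Rot\tA=2\spn^{-1}\skw\tN=0$ by symmetry of $\tN$. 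Identity~(i), $\Rot(u\,\tI)=-\spn\na u$, enters only afterwards, to discard the skew-symmetric part when one applies $\Rot$ a second time. Your citation of identities and the kernel condition do not match.

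Second, and more seriously, the treatment of $\Pot_{\symnac}^m$ for $m\in\{0,1\}$ is a genuine gap. You wave at it with ``a direct modification of the same construction or ... reducing to the hatless version,'' but the displayed formula does not apply (it would invoke $\Pot_{\nac}^{-1}$ for $m=0$, and for $m=1$ one cannot verify the required boundary condition on $\Rott\tM$). The paper's actual argument is a specific one: extend $\tM$ by zero to $\widetilde\tM\in\tHc{m}{\bbS,0}(\RotRott,B)$ on a ball $B\supset\ol\om$, apply the hatless operator $\Pot_{\symna,B}^{m}$ on $B$ to produce $\widetilde\vv$ with $\symna\widetilde\vv=\widetilde\tM$, observe that since $\widetilde\tM$ vanishes on $B\setminus\ol\om$ the field $\widetilde\vv$ must coincide there with some rigid motion $\vr_{\widetilde\vv}\in\RM$, and then set $\vv:=\widetilde\vv-\vr_{\widetilde\vv}$, which now has support in $\ol\om$ and hence lies in $\vHc{m+1}{}(\om)$. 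The rigid-motion subtraction is the crucial step; without it there is no way to conclude the compactly supported boundary condition, and ``reducing to the hatless version'' alone does not supply it.
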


\begin{proof}
First, let us consider only $m=n\in\nz$.

$\bullet$ 
Let $\tN\in\tHc{m}{\bbS,0}(\Div,\om)$ resp. $\tN\in\tH{m}{\bbS,0}(\Div,\om)$.
Then, with row-wise applications of Lemma \ref{regpotlemclasstoptriv},
there exists $\tE = \Pot_{\Rotc}^{m} \tN \in\tHc{m+1}{}(\om)$ resp. $\tE=\Pot_{\Rot}^{m}\tN\in\tH{m+1}{}(\om)$ such that
$\Rot\tE=\tN$. As $\tN$ is symmetric we get with Lemma \ref{appformulasproof} (v) 
for $\widetilde\tE:=\tE^{\top}-(\tr\tE)\tI\in\tHc{m+1}{}(\om)$ resp. $\tH{m+1}{}(\om)$
$$\Div\widetilde\tE
=\Div\tE^{\top}-\na\tr\tE
=2\spn^{-1}\skw\Rot\tE
=2\spn^{-1}\skw\tN=0.$$
Again row-wise applications of Lemma \ref{regpotlemclasstoptriv}
show the existence of $\widetilde\tM=\Pot_{\Rotc}^{m+1}\widetilde\tE\in\tHc{m+2}{}(\om)$ 
resp. $\widetilde\tM=\Pot_{\Rot}^{m+1}\widetilde\tE\in\tH{m+2}{}(\om)$ such that
$\Rot\widetilde\tM=\widetilde\tE$. Hence
$$\tN
=\Rot\tE
=\Rot\big(\widetilde\tE^{\top}+(\tr\tE)\tI\big)
=\Rot\Rott\widetilde\tM+\Rot\big((\tr\tE)\tI\big).$$
Lemma \ref{appformulasproof} (i) yields 
$\Rot\big((\tr\tE)\tI\big)
=-\spn\na\tr\tE$, 
which is skew-symmetric.
Therefore, 
$$\tN
=\sym\tN
=\sym\RotRott\widetilde\tM
=\RotRott\sym\widetilde\tM$$
see Lemma \ref{appformulasproof} (vi).
The tensor field $\tM:=\sym\widetilde\tM\in\tHc{m+2}{\bbS}(\om)$ resp. $\tM:=\sym\widetilde\tM\in\tH{m+2}{\bbS}(\om)$ 
solves $\RotRott\tM=\tN$ and is given by
$$\tM
=\sym\Pot^{m+1}\widetilde\tE
=\sym\Pot^{m+1}\big((\Pot^{m}\tN)^{\top}-(\tr\Pot^{m}\tN)\tI\big)
=:\widehat\Pot^{m}\tN,$$ 
where $\Pot^{k}\in\{\Pot_{\Rotc}^{k},\Pot_{\Rot}^{k}\}$
and $\widehat\Pot^{m}\in\{\Pot_{\RotRotcst}^{m},\Pot_{\RotRotst}^{m}\}$.

$\bullet$ 
Let $\vv\in\vHc{m}{\bot_{\RM}}(\om) (\subset \vL{2}{\bot_{\RM}}(\om))$ resp. $\vv\in\vH{m}{}(\om)$.
Row-wise applications of Lemma \ref{regpotlemclasstoptriv}
show the existence of $\tE=\Pot_{\Divcs}^{m} \vv\in\tHc{m+1}{}(\om)$ resp. $\tE=\Pot_{\Divc}^{m} \vv \in \tH{m+1}{}(\om)$ such that
$\Div\tE=\vv$.
Moreover, for $\vv\in\vL{2}{\bot_{\RM}}(\om)$, $\tE\in\tHc{m+1}{}(\om)$, 
and $\vb\in\rt$ we have that $\tA:=\spn\vb$ is skew-symmetric
and hence 
\begin{align*}
0&=\scp{\vv}{\tA\vx}_{\vL{2}{}(\om)}
=\scp{\Div\tE}{\tA\vx}_{\vL{2}{}(\om)}
=-\scp{\tE}{\na(\tA\vx)}_{\tL{2}{}(\om)}\\
&=-\scp{\tE}{\tA}_{\tL{2}{}(\om)}
=-\scp{\skw\tE}{\spn\vb}_{\tL{2}{}(\om)}
=-2\scp{\spn^{-1}\skw\tE}{\vb}_{\vL{2}{}(\om)},
\end{align*}
using the relation $(\spn\vd)\!:\!(\spn\vb)=2\,\vd\cdot\vb$ for $\vd = \spn^{-1} \skw \tE$, i.e., 
$\spn^{-1}\skw\tE\in\vHc{m+1}{\bot}(\om)$
resp. $\spn^{-1}\skw\tE\in\vH{m+1}{}(\om)$.
By Lemma \ref{regpotlemclasstoptriv} there is some
$\widetilde\tE=\Pot_{\Divc}^{m+1}\spn^{-1}\skw\tE\in\tHc{m+2}{}(\om)$ 
resp. $\widetilde\tE=\Pot_{\Div}^{m+1}\spn^{-1}\skw\tE\in\tH{m+2}{}(\om)$ 
such that $\Div\widetilde\tE=\spn^{-1}\skw\tE$. 
By Lemma \ref{appformulasproof} (iii), (v) we see
$$\Div\skw\tE
=-\rot\spn^{-1}\skw\tE
=-\rot\Div\widetilde\tE
=-2\Div\sym\Rot\widetilde\tE^{\top}.$$
Hence
\begin{align*}
\vv
&=\Div\tE
=\Div\sym\tE+\Div\skw\tE
=\Div\tN
\end{align*}
with $\tN:=\sym\big(\tE-2\Rot\widetilde\tE^{\top}\big)\in\tHc{m+1}{\bbS}(\om)$
resp. $\tH{m+1}{\bbS}(\om)$ given by
\begin{align*}
\tN
&=\sym\big(\tE-2\Rot(\Pot^{m+1}\spn^{-1}\skw\tE)^{\top}\big)\\
&=\sym\big(\Pot^{m}\vv-2\Rott(\Pot^{m+1}\spn^{-1}\skw\Pot^{m}\vv)\big)
=:\widehat\Pot^{m}\vv,
\end{align*}
where $\Pot^{k}\in\{\Pot_{\Divc}^{k},\Pot_{\Div}^{k}\}$
and $\widehat\Pot\in\{\Pot_{\Divcs}^{m},\Pot_{\Divs}^{m}\}$.

$\bullet$ 
Let $\tM\in\tH{m}{\bbS,0}(\RotRott,\om)$.
Then $\Rott\tM\in\tH{m-1}{}(\om)$ with $\Rot(\Rott\tM)=0$.
Row-wise applications of Lemma \ref{regpotlemclasstoptriv}
yield $\vu=\Pot_{\na}^{m-1}\Rott\tM\in\vH{m}{}(\om)$ such that $\na\vu=\Rott\tM$.
By Lemma \ref{appformulasproof} (ii) we see $\div\vu=\tr\na\vu=\tr\Rott\tM=0$
as $\tM$ is symmetric and thus by using Lemma \ref{appformulasproof} (iv) we get
$\na\vu=-\Rott\spn\vu$. Therefore, $\Rott(\tM+\spn\vu)=0$ and again
Lemma \ref{regpotlemclasstoptriv} gives some $\vv=\Pot_{\na}^{m} (\tM+\spn\vu)\in\vH{m+1}{}(\om)$
with $\na\vv= \tM + \spn \vu$. As $\tM$ is symmetric we obtain $\tM=\sym\na\vv$ and
$$\vv
=\Pot_{\na}^{m}(\tM+\spn\vu)
=\Pot_{\na}^{m}(\tM+\spn\Pot_{\na}^{m-1}\Rott\tM)
=:\Pot_{\symna}^{m}\tM.$$ 
Now, let $\tM\in\tHc{m}{\bbS,0}(\RotRott,\om)$.
We can extend $\tM$ by zero to 
$\widetilde\tM\in\tHc{m}{\bbS,0}(\RotRott,B)$ (by definition),
where $B$ denotes an open ball containing $\ol{\om}$.
This can be seen 
by approximating $\tM$ by a sequence $(\tM_{n})\subset\tCc{\infty}{\bbS}(\om)$
of test tensors with $\tM_{n}\to\tM$ in $\tH{}{\bbS}(\RotRott,\om)$ 
resp. $\tH{m}{}(\om)$ and 
observing $\widetilde\tM_{n}\to\widetilde\tM$ in $\tH{}{\bbS}(\RotRott,B)$
resp. $\tH{m}{}(B)$.
In particular, we have $\widetilde\tM\in\tH{m}{\bbS,0}(\RotRott,B)$
and from the latter arguments 
(of the proof without boundary conditions) we obtain 
$\widetilde\vv:=\Pot_{\symna,B}^{m}\widetilde\tM\in\vH{m+1}{}(B)$
with $\sym\na\widetilde\vv=\widetilde\tM$ in $B$. 
As $\widetilde\tM$ vanishes in the complement of $\om$,
$\widetilde\vv$ is a rigid motion $\vr_{\widetilde\vv}\in\RM$ in $B\setminus\ol{\om}$.
Thus we have $\vv:=\widetilde\vv-\vr_{\widetilde\vv}\in\vH{m+1}{}(B)$ and 
$\vv|_{B\setminus\ol{\om}}=0$, which implies 
$\vv\in\tHc{m+1}{}(\om)$ and $\sym\na\vv=\sym\na\widetilde\vv=\tM$ in $\om$.
Moreover,
$$\vv
=(\Pot_{\symna,B}^{m}\widetilde\tM)|_{\om}-\vr_{\widetilde\vv}
=:\Pot_{\symnac}^{m}\tM.$$ 
For $m\geq2$ we can
literally following the corresponding proof without boundary conditions as well.
In particular, we get $\Rott\tM\in\tHc{m-1}{0}(\Rot,\om)$,
$\vu\in\vHc{m}{}(\om)$, and $\tM-\spn\vu\in\tHc{m}{}(\om)$. Thus
$$\vHc{m+1}{}(\om)
\ni\vv
=\Pot_{\nac}^{m}(\tM+\spn\vu)
=\Pot_{\nac}^{m}(\tM+\spn\Pot_{\nac}^{m-1}\Rott\tM)
=:\Pot_{\symnac}^{m}\tM.$$ 

$\bullet$ 
Finally, let $-n\in\n$.
Then for $\tM\in\tH{n}{\bbS,0}(\RotRott,\om)$, $\tN\in\tH{n}{\bbS,0}(\Div,\om)$, 
and $\vv\in\vH{n}{}(\om)$ we literally follow the latter proofs (without boundary condition).
This concludes the proof.
\end{proof}

As a simple consequence of Theorem \ref{regpottheotoptriv} we obtain:

\begin{cor}[Regular potentials]
\label{cor1}
Let $\om$ be additionally topologically trivial
and let $m\in\nz$ and $n\in\z$. 
Then 
\begin{align*}
  \tHc{m}{\bbS,0}(\RotRott,\om)
    & = \symna \vHc{m+1}{}(\om),\\
  \tH{n}{\bbS,0}(\RotRott,\om)
    & = \symna \vH{n+1}{}(\om),\\
  \tHc{m}{\bbS,0}(\Div,\om)
    & = \RotRott \tHc{m}{\bbS}(\RotRott,\om)\\
    & = \RotRott \tHc{m+1,m}{\bbS}(\RotRott,\om)
      = \RotRott \tHc{m+2}{\bbS}(\om),\\
  \tH{n}{\bbS,0}(\Div,\om)
    & = \RotRott \tH{n}{\bbS}(\RotRott,\om)\\
    & = \RotRott \tH{n+1,n}{\bbS}(\RotRott,\om)
      = \RotRott \tH{n+2}{\bbS}(\om),\\
  \vHc{m}{\bot_{\RM}}(\om)
    & = \Div \tHc{m}{\bbS}(\Div,\om)
      = \Div \tHc{m+1}{\bbS}(\om),\\
 \vH{n}{}(\om)
   & = \Div \tH{n}{\bbS}(\Div,\om)
     = \Div \tH{n+1}{\bbS}(\om)
\end{align*}
hold. Moreover, the operators 
$\Pot_{\symnac}^{m}$, $\Pot_{\symna}^n$, $\Pot_{\RotRotcst}^m$, 
$\Pot_{\RotRotst}^n$, $\Pot_{\Divcs}^m$, $\Pot_{\Divs}^n$ 
are regular potential operators associated to the domain complexes \eqref{ElastComplfullVar1} 
and, for $m\ge 1$, the operators $\Pot_{\symnac}^{m}$, $\Pot_{\symna}^n$, 
$\Pot_{\RotRotcst}^{m-1}$, $\Pot_{\RotRotst}^{n-1}$, $\Pot_{\Divcs}^{m-1}$, $\Pot_{\Divs}^{n-1}$ 
are regular potential operators associated to the domain complexes \eqref{ElastComplfullVar2}.
In particular, all ranges are closed.
\end{cor}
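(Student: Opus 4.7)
The plan is to prove each of the six chain equalities by two inclusions, then verify that the operators built in Theorem \ref{regpottheotoptriv} satisfy the abstract Definition \ref{def:potdef} of a regular potential operator, and finally invoke Lemma \ref{lem:SpaceRegDeco} to read off closed ranges. The whole argument is a bookkeeping exercise once Theorem \ref{regpottheotoptriv} is in hand; the only mildly subtle point is the orthogonality to $\RM$.

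For each equality of the form (kernel) $=$ (range), the $\supseteq$ inclusions follow from the complex property \eqref{keyprop} applied to the domain complexes \eqref{ElastComplfullVar1}, \eqref{ElastComplfullVar2}: for instance, if $\tM=\symna\vv$ with $\vv\in\vHc{m+1}{}(\om)$, then $\tM\in\tHc{m}{\bbS}(\om)$ is symmetric and $\RotRott\tM=0$ by Lemma \ref{appformulasproof}, so $\tM\in\tHc{m}{\bbS,0}(\RotRott,\om)$. Similarly, $\Div\RotRott\tM=0$ takes care of the second line, and the inclusions between $\tHc{m+2}{\bbS}(\om)\subset\tHc{m+1,m}{\bbS}(\RotRott,\om)\subset\tHc{m}{\bbS}(\RotRott,\om)$ give the middle identities for free. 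The $\subseteq$ inclusions are precisely the content of Theorem \ref{regpottheotoptriv}: if $\tM\in\tHc{m}{\bbS,0}(\RotRott,\om)$, then $\vv:=\Pot_{\symnac}^{m}\tM\in\vHc{m+1}{}(\om)$ satisfies $\symna\vv=\tM$, and analogously for the other lines.

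The only place where a small additional computation is needed is the orthogonality $\Div\tHc{m+1}{\bbS}(\om)\subseteq\vHc{m}{\bot_{\RM}}(\om)$. Here I would argue: for $\tN\in\tHc{m+1}{\bbS}(\om)$ and any $\vr=\tQ\vx+\vq\in\RM$, integration by parts (legitimate since $\tN$ has homogeneous boundary conditions) gives
\[
\scp{\Div\tN}{\vr}_{\vL{2}{}(\om)}
=-\scp{\tN}{\na\vr}_{\tL{2}{}(\om)}
=-\scp{\tN}{\tQ}_{\tL{2}{}(\om)}=0,
\]
the last equality because $\tN$ is symmetric while $\tQ$ is skew. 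The corresponding $\subseteq$ direction is Theorem \ref{regpottheotoptriv} applied to $\Pot_{\Divcs}^{m}$. For the dual spaces $\vH{n}{}(\om)$ no orthogonality constraint appears, and the proof is identical but simpler.

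Once the set equalities are established, the "regular potential operator" assertion is immediate from Definition \ref{def:potdef}: Theorem \ref{regpottheotoptriv} provides the required bounded right inverses, and the target spaces (e.g., $\vHc{m+1}{}(\om)$, $\tHc{m+2}{\bbS}(\om)$, $\tHc{m+1}{\bbS}(\om)$) embed compactly into the respective $\L{2}$-spaces by Rellich's selection theorem, so the embedding into $D(\A)$ used in Definition \ref{def:potdef} is also compact. Closedness of all ranges then follows from Lemma \ref{lem:SpaceRegDeco}, since the existence of a bounded right inverse of $\A$ forces $R(\A)=\A\H{+}{1}$ to coincide with the range of the continuous operator $\A\Pot_{\A}=\id$ on the already-closed domain space, and equivalently Remark \ref{rem:helmtypedeoc1} provides the Friedrichs/Poincar\'e estimate. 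I do not foresee a genuine obstacle here; the entire corollary is essentially a restatement of Theorem \ref{regpottheotoptriv} in the language of Section \ref{ssec:SpaceDecomp}.
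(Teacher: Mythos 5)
Your proposal is correct and takes essentially the same approach as the paper: both prove each block of equalities by combining the right-inverse property from Theorem \ref{regpottheotoptriv} (giving $\subseteq$) with the complex property and the nested Sobolev inclusions $\tHc{m+2}{\bbS}(\om)\subset\tHc{m+1,m}{\bbS}(\RotRott,\om)\subset\tHc{m}{\bbS}(\RotRott,\om)$ (giving $\supseteq$), and then invoke Rellich for regularity — the paper merely compresses each block into a single circular chain of inclusions rather than writing out the two directions separately. One small cosmetic remark: the cleanest argument for ``all ranges are closed'' is simply that the displayed identities show each range equals the kernel of a closed operator (or the closed subspace $\vHc{m}{\bot_{\RM}}(\om)$), which is a touch more direct than your phrasing via $\A\Pot_{\A}=\id$; but your conclusion is nonetheless correct.
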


\begin{proof}
We have by Theorem \ref{regpottheotoptriv}, e.g.,
\begin{align*}
\tHc{m}{\bbS,0}(\Div,\om)
&=\RotRott\Pot_{\RotRotcst}^{m}\tHc{m}{\bbS,0}(\Div,\om)
\subset\RotRott\tHc{m+2}{\bbS}(\om)\\
&\subset\RotRott\tHc{m+1,m}{\bbS}(\om)
\subset\RotRott\tHc{m}{\bbS}(\RotRott,\om) 
\subset\tHc{m}{\bbS,0}(\Div,\om),
\end{align*}
which implies the third identity. The other identities follow completely analogously.
The regularity follows from Rellich's selection theorem.
\end{proof}

Corollary \ref{cor1} immediately leads to the following result.

\begin{theo}[Closed Hilbert complexes]
\label{maintheo-ela1}
Let $m\in\nz$ and $n\in\z$ and let 
$\om$ be additionally topologically trivial.
The domain complexes of elasticity, cf.~\eqref{ElastComplfullVar1}, \eqref{ElastComplfullVar2},
\begin{equation*}
\xymatrixcolsep{5ex}\xymatrixrowsep{1ex}
\xymatrix{
\{\boldsymbol{0}\} \ar[r]^-{\iota_{\{\boldsymbol{0}\}}} &
\vHc{m+1}{}(\om) \ar[r]^-{\symnac} &
\tHc{m}{\bbS}(\RotRott,\om) \ar[r]^-{\RotRotcst} &
\tHc{m}{\bbS}(\Div,\om) \ar[r]^-{\Divcs} &
\vHc{m}{}(\om) \ar[r]^-{\pi_{\RM}} &
\RM,
\\
\{\boldsymbol{0}\} & \ar[l]_-{\pi_{\{\boldsymbol{0}\}}}
\vH{n}{}(\om) & \ar[l]_-{-\Divs}
\tH{n}{\bbS}(\Div,\om) & \ar[l]_-{\RotRotst}
\tH{n}{\bbS}(\RotRott,\om) & \ar[l]_-{-\symna}
\vH{n+1}{}(\om) & \ar[l]_-{\iota_{\RM}}
\RM,
}
\end{equation*}
and, for $m \ge 1$,
\begin{equation*}
\xymatrixcolsep{3ex}\xymatrixrowsep{1ex}
\xymatrix{
\{\boldsymbol{0}\} \ar[r]^-{\iota_{\{\boldsymbol{0}\}}} &
\vHc{m+1}{}(\om) \ar[r]^-{\symnac} &
\tHc{m,m-1}{\bbS}(\RotRott,\om) \ar[r]^-{\RotRotcst} &
\tHc{m-1}{\bbS}(\Div,\om) \ar[r]^-{\Divcs} &
\vHc{m-1}{}(\om) \ar[r]^-{\pi_{\RM}} &
\RM,
\\
\{\boldsymbol{0}\} & \ar[l]_-{\pi_{\{\boldsymbol{0}\}}}
\vH{n-1}{}(\om) & \ar[l]_-{-\Divs}
\tH{n-1}{\bbS}(\Div,\om) & \ar[l]_-{\RotRotst}
\tH{n,n-1}{\bbS}(\RotRott,\om) & \ar[l]_-{-\symna}
\vH{n+1}{}(\om) & \ar[l]_-{\iota_{\RM}}
\RM
}
\end{equation*}
are exact Hilbert complexes, all ranges are \emph{closed}, and the operators from Theorem \ref{regpottheotoptriv} 
are associated regular potential operators.
\end{theo}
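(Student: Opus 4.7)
The plan is to assemble three ingredients; the substantial analytic work has already been carried out in Theorem~\ref{regpottheotoptriv} and Corollary~\ref{cor1}, so this theorem is essentially a clean synthesis of what precedes it.

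First, I would verify that each of the displayed sequences is a \emph{complex} of densely defined, closed, unbounded (respectively graph-norm-bounded) linear operators between Hilbert spaces. Dense definition, closedness, and the Hilbert-space structure on the graph domains are built into the definitions in Subsection~\ref{sec:Sobolev}. The complex property $\RotRotst\,\symna=0$ and $\Divs\,\RotRotst=0$ is a pointwise differential identity; by density it suffices to verify it on smooth symmetric test fields, where it reduces to formulas of the type collected in Lemma~\ref{appformulasproof}. The endpoint relations $\symnac\circ\iota_{\RM}=0$ and $\pi_{\RM}\circ\Divcs=0$ follow respectively from $\sym\na\vr=0$ for $\vr\in\RM$ and from the integration-by-parts identity $\scp{\Divcs\tN}{\vr}_{\vL{2}{}(\om)}=-\scp{\tN}{\sym\na\vr}_{\tL{2}{\bbS}(\om)}=0$ for $\vr\in\RM$ and $\tN$ in the test class, extended by density. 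The dual endpoint relations are obtained analogously.

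Second, I would read off exactness at every interior position directly from the range identities in Corollary~\ref{cor1}. Specifically, the identity $\tHc{m}{\bbS,0}(\RotRott,\om)=\symna\,\vHc{m+1}{}(\om)$ yields $N(\RotRotcst)\cap\tHc{m}{\bbS}(\RotRott,\om)=R(\symnac)$, i.e.\ exactness at $\tHc{m}{\bbS}(\RotRott,\om)$; the identities $\tHc{m}{\bbS,0}(\Div,\om)=\RotRott\,\tHc{m}{\bbS}(\RotRott,\om)=\RotRott\,\tHc{m+1,m}{\bbS}(\RotRott,\om)$ give exactness at $\tHc{m}{\bbS}(\Div,\om)$ and $\tHc{m-1}{\bbS}(\Div,\om)$ simultaneously for both variants \eqref{ElastComplfullVar1} and \eqref{ElastComplfullVar2}; and $\vHc{m}{\bot_{\RM}}(\om)=\Div\,\tHc{m}{\bbS}(\Div,\om)$ gives $N(\pi_{\RM})\cap\vHc{m}{}(\om)=R(\Divcs)$, i.e.\ exactness at $\vHc{m}{}(\om)$. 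The dual complexes are handled by the same argument using the no-boundary-condition versions in Corollary~\ref{cor1}. Injectivity at the left end is automatic because the source is $\{\boldsymbol{0}\}$; the mapping $\iota_{\RM}$ on the right is by definition surjective onto $\RM$.

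Third, closed range of every involved operator is immediate from the existence of a bounded right inverse supplied by Theorem~\ref{regpottheotoptriv}: if $\A\,\Pot_\A=\id_{R(\A)}$ with $\Pot_\A$ bounded into some Hilbert space continuously embedded into $D(\A)$, then $R(\A)=\A\,\Pot_\A R(\A)$ coincides with the image under the bounded operator $\A\,\Pot_\A$ and is therefore closed. That these are moreover \emph{regular} potential operators follows from Rellich's selection theorem applied to the codomains $\vHc{m+1}{}(\om)$, $\tHc{m+2}{\bbS}(\om)$, $\vH{n+1}{}(\om)$, $\tH{n+2}{\bbS}(\om)$, etc., which compactly embed into the corresponding $\L{2}{}$-type spaces. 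There is no genuinely new analytic obstacle to overcome; the only bookkeeping point worth watching is the compatibility between the two Sobolev-scale variants \eqref{ElastComplfullVar1} and \eqref{ElastComplfullVar2}, which is benign because the relevant kernels depend only on the equation $\RotRott\tM=0$ (respectively $\Div\tN=0$) and not on the finer Sobolev norm placed on the image.
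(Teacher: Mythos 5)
Your overall approach matches the paper's, which dispatches Theorem~\ref{maintheo-ela1} with the one-liner ``Corollary~\ref{cor1} immediately leads to the following result.'' Steps one (complex property, including endpoint maps) and two (exactness read off from the range identities in Corollary~\ref{cor1}) are correct and are exactly what the paper has in mind.

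Step three contains a genuine logical error, though. You argue that $R(\A)$ is closed because it ``coincides with the image under the bounded operator $\A\,\Pot_{\A}$ and is therefore closed.'' The image of a bounded operator between Hilbert spaces is \emph{not} generally closed (consider $\ell^2\to\ell^2$, $(a_n)\mapsto(a_n/n)$); moreover, $\A\Pot_{\A}$ is by construction the identity on $R(\A)$, so saying its image is $R(\A)$ is circular and carries no information. The conclusion you want is still true and is reached cheaply by either of two correct arguments. (a) Via exactness: step two already gives $R(\A_k)=N(\A_{k+1})$ at every interior node, and the kernel of a densely defined closed (or bounded) operator is a closed subspace; the remaining ranges $\{\boldsymbol{0}\}$ and $\RM$ are trivially closed. (b) Via the bounded right inverse directly: if $y_k=\A p_k\in R(\A)$ with $p_k:=\Pot_{\A}y_k$ and $y_k\to y$ in $\H{}{2}$, then boundedness of $\Pot_{\A}$ makes $(p_k)$ Cauchy in $\H{+}{1}$, and the continuous embedding $\H{+}{1}\subset D(\A)$ (which needs to be stated and checked, e.g.\ $\tHc{m+2}{\bbS}(\om)\subset\tHc{m}{\bbS}(\RotRott,\om)$) gives $p_k\to p$ in the graph norm, hence $y=\A p\in R(\A)$. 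Either fix restores the argument; in the spirit of the paper, (a) is the shortest route since exactness is already established.
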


\begin{rem}[Topology]
\label{regpotremtoptriv}
A closer inspection of Theorem \ref{regpottheotoptriv} 
and Corollary \ref{cor1}
together with their proofs shows that 
the assumptions on the topologies can be weakened.
In particular, in Theorem \ref{regpottheotoptriv} 
the following holds true for the potential operators: 
\begin{itemize}
\item[\bf(i)]
$\Pot_{\divc}^{m}$, $\Pot_{\div}^{n}$, $\Pot_{\Divc}^{m}$, $\Pot_{\Div}^{n}$
and hence $\Pot_{\Divcs}^{m}$, $\Pot_{\Divs}^{n}$
do not depend on the topology 
and all results involving these divergence operators
hold for general bounded strong Lipschitz domains.
\item[\bf(ii)]
$\Pot_{\nac}^{m}$, $\Pot_{\rot}^{n}$, $\Pot_{\Rot}^{n}$
and hence $\Pot_{\symnac}^{m}$, $\Pot_{\RotRotst}^{n}$
exist on the respective kernels if and only if
$$\vHarm{}{D}(\om)
=\{\boldsymbol{0}\}
\qequi
\ga\text{ is connected}.$$
\item[\bf(iii)]
$\Pot_{\na}^{n}$, $\Pot_{\rotc}^{m}$, $\Pot_{\Rotc}^{m}$
and hence $\Pot_{\symna}^{n}$, $\Pot_{\RotRotcst}^{m}$
exist on the respective kernels if and only if
$$\vHarm{}{N}(\om)
=\{\boldsymbol{0}\}
\qequi
\om\text{ has no handles}.$$
\end{itemize}
As the assertions for the potentials imply
the triviality of the cohomology groups we have:
\begin{itemize}
\item[\bf(iv)]
$\tHarm{}{\bbS,D}(\om)
=\{\boldsymbol{0}\}
\qequi
\vHarm{}{D}(\om)
=\{\boldsymbol{0}\}
\qequi
\ga\text{ is connected}$
\item[\bf(v)]
$\tHarm{}{\bbS,N}(\om)
=\{\boldsymbol{0}\}
\qequi
\vHarm{}{N}(\om)
=\{\boldsymbol{0}\}
\qequi
\om\text{ has no handles}$
\end{itemize}
\end{rem}

From Lemma \ref{lem:SpaceRegDeco}, Lemma \ref{lem:SpaceRegDecorefined}, 
Theorem \ref{regpottheotoptriv}, and Corollary \ref{cor1} 
we immediately obtain the following regular decompositions.

\begin{cor}[Regular decompositions]
\label{regdecocortoptriv}
Let $\om$ be additionally topologically trivial and let $m\in\nz$ and $n\in\z$. 
Then the regular decompositions 
\begin{align*}
\tHc{m}{\bbS}(\RotRott,\om)
&=R(\Pot_{\RotRotcst}^{m})
\dotplus\symna R(\Pot_{\symnac}^{m})\\
&=R(\Pot_{\RotRotcst}^{m})
\dotplus\symna\vHc{m+1}{}(\om)
=\tHc{m+2}{\bbS}(\om)+\symna\vHc{m+1}{}(\om),\\
\tH{n}{\bbS}(\RotRott,\om)
&=R(\Pot_{\RotRotst}^{n})
\dotplus\symna R(\Pot_{\symna}^{n})\\
&=R(\Pot_{\RotRotst}^{n})
\dotplus\symna\vH{n+1}{}(\om)
=\tH{n+2}{\bbS}(\om)+\symna\vH{n+1}{}(\om),\\
\tHc{m}{\bbS}(\Div,\om)
&=R(\Pot_{\Divcs}^{m})
\dotplus\RotRott R(\Pot_{\RotRotcst}^{m})\\
&=R(\Pot_{\Divcs}^{m})
\dotplus\RotRott\tHc{m+2}{\bbS}(\om)
=\tHc{m+1}{\bbS}(\om)+\RotRott\tHc{m+2}{\bbS}(\om),\\
\tH{n}{\bbS}(\Div,\om)
&=R(\Pot_{\Divs}^{n})
\dotplus\RotRott R(\Pot_{\RotRotst}^{n})\\
&=R(\Pot_{\Divs}^{n})
\dotplus\RotRott\tH{n+2}{\bbS}(\om)
=\tH{n+1}{\bbS}(\om)+\RotRott\tH{n+2}{\bbS}(\om)
\end{align*}
hold. 
\end{cor}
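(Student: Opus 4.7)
The plan is essentially an assembly step: everything needed has already been established, so the proof amounts to matching each decomposition in the statement against the abstract pattern of Lemma \ref{lem:SpaceRegDecorefined} (refined potential decomposition) and plugging in the concrete potential operators from Theorem \ref{regpottheotoptriv}, together with the exactness of the elasticity complexes from Theorem \ref{maintheo-ela1}.

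For the first identity, I would work with the pair $\Az = \symnac$ and $\Ao = \RotRotcst$, at Sobolev order $m$, so that $D(\Ao) = \tHc{m}{\bbS}(\RotRott,\om)$. Theorem \ref{maintheo-ela1} guarantees exactness, i.e., $R(\symnac) = N(\RotRotcst)$. Theorem \ref{regpottheotoptriv} supplies regular potential operators $\Pot_{\symnac}^{m}\colon\tHc{m}{\bbS,0}(\RotRott,\om)\to\vHc{m+1}{}(\om)$ and $\Pot_{\RotRotcst}^{m}\colon\tHc{m}{\bbS,0}(\Div,\om)\to\tHc{m+2}{\bbS}(\om)$. Lemma \ref{lem:SpaceRegDecorefined} then gives directly
\[
\tHc{m}{\bbS}(\RotRott,\om) = R(\Pot_{\RotRotcst}^{m}) \dotplus \symna\,R(\Pot_{\symnac}^{m}) = R(\Pot_{\RotRotcst}^{m}) \dotplus \symna\,\vHc{m+1}{}(\om),
\]
and by Corollary \ref{cor1} we have $R(\Pot_{\RotRotcst}^{m})\subset\tHc{m+2}{\bbS}(\om)$, yielding the rightmost (non-direct) sum $\tHc{m+2}{\bbS}(\om) + \symna\vHc{m+1}{}(\om)$.

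The second identity is obtained in the same way from the dual elasticity complex of Theorem \ref{maintheo-ela1}, taking $\Az = \symna$ and $\Ao = \RotRotst$ at order $n$, using the potential operators $\Pot_{\symna}^{n}$ and $\Pot_{\RotRotst}^{n}$. For the third identity I would shift one step along the primal complex, taking $\Az = \RotRotcst$ and $\Ao = \Divcs$, so $D(\Ao) = \tHc{m}{\bbS}(\Div,\om)$; exactness $R(\RotRotcst) = N(\Divcs)$ is again provided by Theorem \ref{maintheo-ela1}, and the potentials $\Pot_{\RotRotcst}^{m}$, $\Pot_{\Divcs}^{m}$ are in place, with $R(\Pot_{\Divcs}^{m})\subset\tHc{m+1}{\bbS}(\om)$. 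Lemma \ref{lem:SpaceRegDecorefined} then produces the claimed decompositions, and the fourth identity is the completely analogous shift on the dual complex.

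There is really no genuine obstacle — the only care needed is bookkeeping. One should check that the identification of $\H{+}{0}$, $\H{+}{1}$ with the concrete regular target spaces of Theorem \ref{regpottheotoptriv} is compatible with the range inclusions $R(\Pot_{\A})\subset\H{+}{1}\subset D(\A)$ used implicitly in Lemma \ref{lem:SpaceRegDecorefined}; these embeddings all hold by the explicit regularity (one extra Sobolev order, plus the appropriate boundary condition) guaranteed in Theorem \ref{regpottheotoptriv}. Once these identifications are recorded, each of the four decompositions is immediate, and the passage from the direct sum form $R(\Pot_{\Ao})\dotplus\Az R(\Pot_{\Az})$ to the non-direct form $\H{+}{1}+\Az\H{+}{0}$ follows exactly as spelled out in Lemma \ref{lem:SpaceRegDecorefined}.
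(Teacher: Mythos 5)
Your proposal is correct and matches the paper's own approach exactly: the paper derives the corollary in one line from Lemma \ref{lem:SpaceRegDecorefined}, Theorem \ref{regpottheotoptriv}, and the exactness recorded in Corollary \ref{cor1} / Theorem \ref{maintheo-ela1}, precisely as you do. The only nit is attribution: the inclusion $R(\Pot_{\RotRotcst}^{m})\subset\tHc{m+2}{\bbS}(\om)$ is the stated codomain in Theorem \ref{regpottheotoptriv} itself rather than something supplied by Corollary \ref{cor1}.
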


By similar arguments we also obtain the following non-standard regular decompositions.

\begin{cor}[Regular decompositions]
\label{regdecononstandcor}
Let $\om$ be additionally topologically trivial and 
let $m\in\n$ and $n\in\z$. Then the regular decompositions
\begin{align*}
\tHc{m,m-1}{\bbS}(\RotRott,\om)
&=R(\Pot_{\RotRotcst}^{m-1})
\dotplus\symna R(\Pot_{\symnac}^{m})\\
&=R(\Pot_{\RotRotcst}^{m-1})
\dotplus\symna\vHc{m+1}{}(\om)
=\tHc{m+1}{\bbS}(\om)+\symna\vHc{m+1}{}(\om),\\
\tH{n,n-1}{\bbS}(\RotRott,\om)
&=R(\Pot_{\RotRotst}^{n-1})
\dotplus\symna R(\Pot_{\symna}^{n})\\
&=R(\Pot_{\RotRotst}^{n-1})
\dotplus\symna\vH{n+1}{}(\om)
=\tH{n+1}{\bbS}(\om)+\symna\vH{n+1}{}(\om)
\end{align*}
hold. 
\end{cor}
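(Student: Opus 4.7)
The plan is to adapt the argument underlying Corollary \ref{regdecocortoptriv} (which in turn is an instance of the refined potential decomposition from Lemma \ref{lem:SpaceRegDecorefined}) to the less regular setting considered here. The crucial modification is to shift the Sobolev index of the $\RotRott$-potential by one, invoking $\Pot_{\RotRotcst}^{m-1}$ in place of $\Pot_{\RotRotcst}^{m}$ (and $\Pot_{\RotRotst}^{n-1}$ in place of $\Pot_{\RotRotst}^{n}$), in order to match the weaker regularity requirement $\RotRott\tM\in\tHc{m-1}{\bbS}(\om)$ built into the definitions of $\tHc{m,m-1}{\bbS}(\RotRott,\om)$ and $\tH{n,n-1}{\bbS}(\RotRott,\om)$.

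Concretely, for $\tM\in\tHc{m,m-1}{\bbS}(\RotRott,\om)$ I would first observe that $\RotRott\tM\in\tHc{m-1}{\bbS}(\om)$ and, by the complex property $\Divcs\,\RotRotcst\subset 0$, also $\Div\RotRott\tM=0$, so $\RotRott\tM\in\tHc{m-1}{\bbS,0}(\Div,\om)$. Theorem \ref{regpottheotoptriv} then yields $\tM_{1}:=\Pot_{\RotRotcst}^{m-1}\RotRott\tM\in\tHc{m+1}{\bbS}(\om)$ with $\RotRott\tM_{1}=\RotRott\tM$. Hence $\tM-\tM_{1}\in N(\RotRotcst)\cap\tHc{m}{\bbS}(\om)=\tHc{m}{\bbS,0}(\RotRott,\om)$, and a second application of Theorem \ref{regpottheotoptriv} produces $\vv:=\Pot_{\symnac}^{m}(\tM-\tM_{1})\in\vHc{m+1}{}(\om)$ with $\symna\vv=\tM-\tM_{1}$. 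This shows the decomposition $\tM=\tM_{1}+\symna\vv$, giving the inclusion ``$\subset$'' in all three equalities. The reverse inclusions are immediate, since $R(\Pot_{\RotRotcst}^{m-1})\subset\tHc{m+1}{\bbS}(\om)\subset\tHc{m}{\bbS}(\om)$ with $\RotRott R(\Pot_{\RotRotcst}^{m-1})\subset\tHc{m-1}{\bbS}(\om)$ by construction, while $\symna\vHc{m+1}{}(\om)\subset\tHc{m}{\bbS}(\om)$ is annihilated by $\RotRott$ and approximation by test fields places both summands in $D(\RotRotcst)$.

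Directness of the first two sums follows from the right-inverse property: if $\tM_{1}+\symna\vv=0$ with $\tM_{1}=\Pot_{\RotRotcst}^{m-1}\tN$, applying $\RotRott$ yields $\tN=\RotRott\tM_{1}=0$, whence $\tM_{1}=0$ and $\symna\vv=0$. The identity $\symna R(\Pot_{\symnac}^{m})=\symna\vHc{m+1}{}(\om)$ is a direct consequence of Corollary \ref{cor1}. The second decomposition, for $\tH{n,n-1}{\bbS}(\RotRott,\om)$, is proved verbatim with the unprimed (no boundary condition) counterparts $\Pot_{\RotRotst}^{n-1}$, $\Pot_{\symna}^{n}$ and the kernel spaces $\tH{n-1}{\bbS,0}(\Div,\om)$, $\tH{n}{\bbS,0}(\RotRott,\om)$. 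There is no genuine obstacle beyond index-bookkeeping; the single subtle point is that the shift $m\mapsto m-1$ in the $\RotRott$-potential is exactly calibrated so that its range $\tHc{m+1}{\bbS}(\om)$ embeds into $\tHc{m}{\bbS}(\om)$, keeping the residual $\tM-\tM_{1}$ in $\tHc{m}{\bbS,0}(\RotRott,\om)$ and thereby in the domain of $\Pot_{\symnac}^{m}$.
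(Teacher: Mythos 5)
Your proposal is correct and follows essentially the same route as the paper: before Corollary \ref{regdecocortoptriv} the authors derive these decompositions from the abstract refined potential decomposition (Lemma \ref{lem:SpaceRegDecorefined}) combined with Theorem \ref{regpottheotoptriv} and Corollary \ref{cor1}, and before Corollary \ref{regdecononstandcor} they state that the non-standard version follows ``by similar arguments'' — precisely the index-shifted instantiation $\Pot_{\RotRotcst}^{m-1}$ (resp.~$\Pot_{\RotRotst}^{n-1}$) together with $\Pot_{\symnac}^{m}$ (resp.~$\Pot_{\symna}^{n}$) that you spell out. You have simply unpacked the abstract Lemma \ref{lem:SpaceRegDecorefined} into concrete steps, taking care (correctly) of the only delicate point, namely that for $m\ge1$ the potential $\tM_{1}\in\tHc{m+1}{\bbS}(\om)$ lies in $D(\RotRotcst)$ by density of test tensors, so the residual $\tM-\tM_{1}$ indeed lands in $\tHc{m}{\bbS,0}(\RotRott,\om)$.
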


\subsection{General Strong Lipschitz Domains: Regular Decompositions and  Potentials}
\label{ssec:primdualregpotgendom}

To prove regular potentials and regular decompositions 
as well as more sophisticated compact embeddings 
for general bounded strong Lipschitz domains of arbitrary topology
we start with a cutting lemma, showing a technical difficulty 
due to the ``second order'' nature of the operator $\RotRott$.

\begin{lem}[Cutting lemma]
\label{cutlem}
Let $\varphi\in\Cc{\infty}{}(\rt)$ and let $m\in\nz$ and $n\in\z$.
\begin{itemize}
\item[\bf(i)]
If $\tN\in\tH{n}{\bbS}(\Div,\om)$, then
$\varphi\tN\in\tH{n}{\bbS}(\Div,\om)$ and 
\begin{align}
\label{Divformula}
\Div(\varphi\tN)
=\varphi\Div\tN+\tN\na\varphi.
\end{align}
\item[\bf(i')]
If $\tN\in\tHc{m}{\bbS}(\Div,\om)$, then
$\varphi\tN\in\tHc{m}{\bbS}(\Div,\om)$ and \eqref{Divformula} holds.
\item[\bf(ii)]
If $\tM\in\tH{n,n-1}{\bbS}(\RotRott,\om)$, then 
$\varphi\tM\in\tH{n,n-1}{\bbS}(\RotRott,\om)$ and 
\begin{align}
\label{RotRotTformula}
\RotRott(\varphi\tM)
=\varphi\RotRott\tM
+2\sym\big((\spn\na\varphi)\Rot\tM\big)
+\Psi(\na\na \varphi,\tM)
\end{align}
holds with an algebraic operator $\Psi$.
In particular, this holds for $\tM\in\tH{n}{\bbS}(\RotRott,\om)$.
\item[\bf(ii')]
Let $m\geq1$ and let $\tM\in\tHc{m,m-1}{\bbS}(\RotRott,\om)$. Then
$\varphi\tM\in\tHc{m,m-1}{\bbS}(\RotRott,\om)$ together with 
\eqref{RotRotTformula}.
In particular, this holds for $\tM\in\tHc{m}{\bbS}(\RotRott,\om)$.
\end{itemize}
\end{lem}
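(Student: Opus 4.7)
The plan is to reduce each assertion to a product rule, verified first for smooth fields and then extended. The identity \eqref{Divformula} in part (i) is the row-wise Leibniz rule; for $\tN\in\tH{n}{\bbS}(\Div,\om)$ with $n\in\z$, testing against $\vpsi\in\vCc{\infty}{}(\om)$ gives
\[
\dualp{\Div(\varphi\tN)}{\vpsi}
=-\dualp{\varphi\tN}{\na\vpsi}
=-\dualp{\tN}{\na(\varphi\vpsi)-\vpsi\otimes\na\varphi}
=\dualp{\varphi\Div\tN+\tN\na\varphi}{\vpsi}.
\]
Since multiplication by the smooth $\varphi$ preserves each Sobolev space, the right-hand side lies in $\vH{n}{}(\om)$. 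The boundary-condition version (i') follows by approximating $\tN$ by test tensors $(\tN_{k})\subset\tCc{\infty}{\bbS}(\om)$, observing $\varphi\tN_{k}\in\tCc{\infty}{\bbS}(\om)$, and passing to the limit in the graph norm via \eqref{Divformula}.

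For parts (ii), (ii'), the main calculation establishes the two-fold product rule. A direct computation in index notation gives $\Rot(\varphi\tA)=\varphi\Rot\tA-\tA\spn\na\varphi$ for smooth $\tA$. Taking the transpose, exploiting skew-symmetry of $\spn\na\varphi$ and symmetry of $\tM$, yields $\Rott(\varphi\tM)=\varphi\Rott\tM+(\spn\na\varphi)\tM$. A second application of the same rule, and collection of the algebraic term in $\na\na\varphi$ arising from $\partial(\spn\na\varphi)$ contracted with $\tM$ (which we call $\Psi$), produces
\[
\RotRott(\varphi\tM)=\varphi\RotRott\tM-(\Rott\tM)(\spn\na\varphi)+(\spn\na\varphi)\Rot\tM+\Psi(\na\na\varphi,\tM).
\]
Since $\big((\spn\na\varphi)\Rot\tM\big)^{\top}=(\Rott\tM)(\spn\na\varphi)^{\top}=-(\Rott\tM)(\spn\na\varphi)$, the middle two terms combine to $2\sym\big((\spn\na\varphi)\Rot\tM\big)$, yielding \eqref{RotRotTformula}.

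Checking regularity: $\varphi\RotRott\tM$ inherits the regularity of $\RotRott\tM$, the algebraic term $\Psi(\na\na\varphi,\tM)$ inherits that of $\tM$, and the crucial middle term $(\spn\na\varphi)\Rot\tM$ costs one derivative of $\tM$ and hence lies in $\tH{n-1}{}(\om)$. Symmetry of $\RotRott(\varphi\tM)$ is guaranteed by Lemma \ref{appformulasproof}~(vi'), so altogether $\varphi\tM\in\tH{n,n-1}{\bbS}(\RotRott,\om)$. For (ii') with $m\ge1$, the approximation scheme from (i') applies: density of $\tCc{\infty}{\bbS}(\om)$ in $D(\RotRotcst)$ places $\varphi\tM\in D(\RotRotcst)$, while $\tM\in\tHc{m}{\bbS}(\om)$ gives $\Rot\tM\in\tHc{m-1}{}(\om)$ by taking limits of test tensors, which is the ingredient that requires $m\ge1$.

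The main obstacle is precisely the one-derivative loss contributed by the term $(\spn\na\varphi)\Rot\tM$: it is controlled by $\tM$ rather than by $\RotRott\tM$. This is the technical reason why the target space must be the mixed-regularity space $\tH{n,n-1}{\bbS}(\RotRott,\om)$ instead of $\tH{n}{\bbS}(\RotRott,\om)$, and why the restriction $m\ge1$ is imposed in (ii'); it is exactly this point that motivates the introduction of the spaces $\tH{n,n-1}{\bbS}(\RotRott,\om)$ and underlies their role throughout the remainder of the analysis.
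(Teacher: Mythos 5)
Your proof is correct and follows essentially the same route as the paper: row-wise Leibniz rules handle (i) and (i'), and the two-fold product rule derived from $\Rot(\varphi\tA)=\varphi\Rot\tA-\tA\spn\na\varphi$ (the paper's intermediate identity $\Rot(\tP\tQ)=\tP\Rot\tQ+(\na\tP)\times\tQ$), combined with skew-symmetry of $\spn\na\varphi$ and symmetry of $\tM$, yields \eqref{RotRotTformula}, with the one-derivative loss from the $(\spn\na\varphi)\Rot\tM$ term accounting for the mixed-regularity target space. The only cosmetic difference is that you invoke $\tM=\tM^{\top}$ already at the $\Rott(\varphi\tM)$ stage while the paper defers it until after the second application of $\Rot$; both give the same final simplification.
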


\begin{proof}
(i) and (i') follow immediately from row-wise applications
of the corresponding results for vector fields.
Let us consider (ii). 
For vectors $\vp,\vq\in\rt$ and a matrix $\tQ\in\rttt$ 
it holds $\vp\times\vq=(\spn\vp)\vq$ and 
$\vp\times\tQ=-\tQ\spn\vp$ with row-wise operation of the exterior product.
Moreover, for $\tP,\tQ=\big( \vq_1,\vq_2,\vq_3\big)\in\tCc{\infty}{}(\rt)$ we have
$$\Rot(\tP\tQ)=\tP\Rot\tQ+(\na\tP) \times \tQ$$
with
$(\na\tP) \times \tQ
   := \big( 
       (\partial_2 \tP) \vq_3 - (\partial_3 \tP) \vq_2, 
       (\partial_3 \tP) \vq_1 - (\partial_1 \tP) \vq_3, 
       (\partial_1 \tP) \vq_2 - (\partial_2 \tP) \vq_1 \big)$.
Now, let $\tM\in\tCc{\infty}{}(\rt)$. 
By the standard row-wise formula for vector fields we compute
\begin{align*}
\Rot(\varphi\tM)
&=\varphi\Rot\tM
-\tM\spn\na\varphi,\qquad
\Rott(\varphi\tM)
=\varphi\Rott\tM
+(\spn\na\varphi)\tM^{\top}
\end{align*}
and
\begin{align*}
&\qquad\RotRott(\varphi\tM)\\
&=\varphi\RotRott\tM
-\Rott\tM\spn\na\varphi
+(\spn\na\varphi)\Rot\tM^{\top}
+\big(\na(\spn\na\varphi)\big) \times \tM^{\top}\\
&=\varphi\RotRott\tM
+\big((\spn\na\varphi)\Rot\tM\big)^{\top}
+(\spn\na\varphi)\Rot\tM^{\top}
+\Psi(\na\na\varphi,\tM).
\end{align*}
If $\tM$ is symmetric, we observe
$$\RotRott(\varphi\tM)
=\varphi\RotRott\tM
+2\sym\big((\spn\na\varphi)\Rot\tM\big)
+\Psi(\na\na\varphi,\tM).$$
Finally, the latter formulas extend to distributions as well.
\end{proof}

Now we proceed by showing regular decompositions for the elasticity complexes.

\begin{theo}[Regular decompositions]
\label{regdecopotgenlip}
Let $m,n\in\nz$. 
The regular decompositions in Corollary \ref{regdecocortoptriv} 
and Corollary \ref{regdecononstandcor} hold in the sense that 
there exist bounded linear operators
\begin{align*}
\Qot_{\RotRotcst}^{m,m-1}
:\tHc{m,m-1}{\bbS}(\RotRott,\om)
&\To\tHc{m+1}{\bbS}(\om),
&
\widetilde{\Qot}_{\symnac}^{m,m-1}
:\tHc{m,m-1}{\bbS}(\RotRott,\om)
&\To\vHc{m+1}{}(\om)
\intertext{for $m\ge 1$, and}
\Qot_{\RotRotst}^{n,n-1}
:\tH{n,n-1}{\bbS}(\RotRott,\om)
&\To\tH{n+1}{\bbS}(\om),
&
\widetilde{\Qot}_{\symna}^{n,n-1}
:\tH{n,n-1}{\bbS}(\RotRott,\om)
&\To\vH{n+1}{}(\om),\\
\Qot_{\Divcs}^{m}
:\tHc{m}{\bbS}(\Div,\om)
&\To\tHc{m+1}{\bbS}(\om),
&
\widetilde{\Qot}_{\RotRotcst}^{m}
:\tHc{m}{\bbS}(\Div,\om)
&\To\tHc{m+2}{\bbS}(\om),\\
\Qot_{\Divs}^{n}
:\tH{n}{\bbS}(\Div,\om)
&\To\tH{n+1}{\bbS}(\om),
&
\widetilde{\Qot}_{\RotRotst}^{n}
:\tH{n}{\bbS}(\Div,\om)
&\To\tH{n+2}{\bbS}(\om),
\end{align*}
such that
\begin{align*}
\tHc{m,m-1}{\bbS}(\RotRott,\om)
&=\tHc{m+1}{\bbS}(\om)+\symna\vHc{m+1}{}(\om)
=R(\Qot_{\RotRotcst}^{m,m-1})
+\symna R(\widetilde{\Qot}_{\symnac}^{m,m-1})\\
&=R(\Qot_{\RotRotcst}^{m,m-1})
+\tHc{m}{\bbS,0}(\RotRott,\om)
\intertext{for $m\ge 1$, and}
\tH{n,n-1}{\bbS}(\RotRott,\om)
&=\tH{n+1}{\bbS}(\om)+\symna\vH{n+1}{}(\om)
=R(\Qot_{\RotRotst}^{n,n-1})
+\symna R(\widetilde{\Qot}_{\symna}^{n,n-1})\\
&=R(\Qot_{\RotRotst}^{n,n-1})
+\tH{n}{\bbS,0}(\RotRott,\om),\\
\tHc{m}{\bbS}(\Div,\om)
&=\tHc{m+1}{\bbS}(\om)+\RotRott\tHc{m+2}{\bbS}(\om)
=R(\Qot_{\Divcs}^{m})
+\RotRott R(\widetilde{\Qot}_{\RotRotcst}^{m})\\
&=R(\Qot_{\Divcs}^{m})
+\tHc{m}{\bbS,0}(\Div,\om),\\
\tH{n}{\bbS}(\Div,\om)
&=\tH{n+1}{\bbS}(\om)+\RotRott\tH{n+2}{\bbS}(\om)
=R(\Qot_{\Divs}^{n})
+\RotRott R(\widetilde{\Qot}_{\RotRotst}^{n})\\
&=R(\Qot_{\Divs}^{n})
+\tH{n}{\bbS,0}(\Div,\om).
\end{align*}
\end{theo}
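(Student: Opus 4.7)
The plan is to reduce to the topologically trivial case via the partition of unity $(\varphi_\ell)$ subordinate to the covering $(U_\ell)$ of $\ol\om$ introduced in Section \ref{sec:domains}, applying Corollaries \ref{regdecocortoptriv} and \ref{regdecononstandcor} on each patch $\om_\ell = \om \cap U_\ell$, and then patching the local decompositions together.

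The easier situation is the one with homogeneous boundary conditions. For $\tM \in \tHc{m,m-1}{\bbS}(\RotRott,\om)$ I write $\tM = \sum_\ell \varphi_\ell \tM$; since $\varphi_\ell$ has compact support in $U_\ell$ (hence vanishes near $\p U_\ell$) and $\tM$ has homogeneous boundary values inherited from the original space, the cutting Lemma \ref{cutlem} (ii') places $\varphi_\ell \tM \in \tHc{m,m-1}{\bbS}(\RotRott,\om_\ell)$ with homogeneous boundary values on the entire $\p\om_\ell$ (approximate by $\tCc{\infty}{\bbS}(\om)$-fields and use that $\varphi_\ell$ also kills them near $\p U_\ell$). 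Corollary \ref{regdecononstandcor} on the topologically trivial $\om_\ell$ then yields $\varphi_\ell \tM = \tQ_{1,\ell} + \symna \vu_{0,\ell}$ with $\tQ_{1,\ell} \in \tHc{m+1}{\bbS}(\om_\ell)$ and $\vu_{0,\ell} \in \vHc{m+1}{}(\om_\ell)$; both pieces extend by zero to elements of $\tHc{m+1}{\bbS}(\om)$ and $\vHc{m+1}{}(\om)$ because of the homogeneous boundary values, and summing over $\ell$ defines the bounded operators $\Qot_{\RotRotcst}^{m,m-1}$ and $\widetilde{\Qot}_{\symnac}^{m,m-1}$. The $\Div$ decomposition with boundary conditions is handled identically using Corollary \ref{regdecocortoptriv} and Lemma \ref{cutlem} (i'), also checking that $\Div(\varphi_\ell \tN)$ is $\vL{2}{}(\om_\ell)$-orthogonal to $\RM$ (this is automatic since $\tN$ is symmetric and $\na\vr \in \rttt$ is skew for $\vr \in \RM$).

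In the absence of boundary conditions, $\varphi_\ell \tM$ only carries homogeneous boundary values on $\p U_\ell \cap \om$ (where $\varphi_\ell$ vanishes) and not on $\p\om \cap U_\ell$, so the local potentials on $\om_\ell$ do not extend trivially to $\om$. I fix this using the secondary cutoff $\phi_\ell \in \Cc{\infty}{}(U_\ell)$ with $\phi_\ell|_{\supp\varphi_\ell} = 1$: after decomposing on $\om_\ell$, I multiply by $\phi_\ell$ (which is the identity on $\varphi_\ell \tM$ because $\phi_\ell \varphi_\ell = \varphi_\ell$) and commute $\phi_\ell$ through $\symna$ via $\phi_\ell \symna \vu = \symna(\phi_\ell \vu) - \sym(\vu \otimes \na\phi_\ell)$, absorbing the commutator into the regular part. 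The factor $\phi_\ell$ now forces both pieces to vanish near $\p U_\ell \cap \om$, so zero extension to $\om$ is legitimate.

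The principal technical obstacle is the second-order commutator appearing in the no-BC case for $\RotRott$. When commuting $\phi_\ell$ through $\RotRott$ via Lemma \ref{cutlem} (ii), the right-hand side of the cutting identity produces the two extra terms $2\sym((\spn\na\phi_\ell)\Rot \widetilde{\tQ}_{0,\ell})$ and $\Psi(\na\na\phi_\ell, \widetilde{\tQ}_{0,\ell})$; the second loses two derivatives from $\phi_\ell$, and only the improved regularity $\widetilde{\tQ}_{0,\ell} \in \tH{n+2}{\bbS}(\om_\ell)$ afforded by the potentials $\Pot_{\RotRotst}^n$ and $\Pot_{\RotRotcst}^m$ from Theorem \ref{regpottheotoptriv} guarantees that these commutators still land in $\tH{n+1}{\bbS}(\om_\ell)$ (respectively $\tHc{m+1}{\bbS}(\om_\ell)$) and can be absorbed into the regular part. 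Finally, the second equalities in each decomposition follow by construction (restricting the $\vHc{m+1}{}(\om)$ and $\tHc{m+2}{\bbS}(\om)$ factors to the ranges of the $\widetilde{\Qot}$ operators), and the third equalities follow from the complex properties $\RotRott \symna = 0$ and $\Div \RotRott = 0$, which embed $\symna \vHc{m+1}{}(\om) \subset \tHc{m}{\bbS,0}(\RotRott,\om)$ and $\RotRott \tHc{m+2}{\bbS}(\om) \subset \tHc{m}{\bbS,0}(\Div,\om)$, together with the trivial reverse inclusion that both summands on the right already lie in the space on the left.
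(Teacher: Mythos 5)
Your proposal follows the paper's proof essentially line by line: partition of unity over the covering by topologically trivial patches $\om_\ell$, direct zero extension of the local potentials from Corollaries \ref{regdecocortoptriv} and \ref{regdecononstandcor} in the boundary-condition cases, and the secondary cutoff $\phi_\ell$ together with the commutator formulas of Lemma \ref{cutlem} in the no-boundary-condition cases, with the loss of derivatives in $\Psi(\na\na\phi_\ell,\cdot)$ and $2\sym((\spn\na\phi_\ell)\Rot\,\cdot)$ absorbed by the improved regularity $\tH{n+2}{\bbS}$ of the local $\RotRott$-potential. The only superfluous step is your explicit check that $\Div(\varphi_\ell\tN)\perp\RM$, which is already built into Corollary \ref{regdecocortoptriv} since it decomposes the full space $\tHc{m}{\bbS}(\Div,\om_\ell)$; otherwise the argument is correct and identical in method to the paper's.
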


\begin{proof}
Let $(\varphi_{\ell},U_{\ell})$ be the partition of unity 
together with $\phi_{\ell}\in\Cc{\infty}{}(U_{\ell})$ from Section \ref{sec:domains}.

$\bullet$ Let $\tN\in\tHc{m}{\bbS}(\Div,\om)$. 
By Lemma \ref{cutlem}, approximation,
and Corollary \ref{regdecocortoptriv} we have
$$\varphi_{\ell}\tN
\in\tHc{m}{\bbS}(\Div,\om_{\ell})
=\tHc{m+1}{\bbS}(\om_{\ell})+\RotRott\tHc{m+2}{\bbS}(\om_{\ell}).$$
Thus $\varphi_{\ell}\tN=\tN_{\ell}+\RotRott\tM_{\ell}$
with $\tN_{\ell}\in\tHc{m+1}{\bbS}(\om_{\ell})$ and $\tM_{\ell}\in\tHc{m+2}{\bbS}(\om_{\ell})$.
Extending $\tN_{\ell}$ and $\tM_{\ell}$ by zero gives
$\widetilde\tN_{\ell}\in\tHc{m+1}{\bbS}(\om)$ and $\widetilde\tM_{\ell}\in\tHc{m+2}{\bbS}(\om)$ with 
$$\tN
=\sum_{\ell=1}^{L}\varphi_{\ell}\tN
=\sum_{\ell=1}^{L}\widetilde\tN_{\ell}+\RotRott\sum_{\ell=1}^{L}\widetilde\tM_{\ell}
\in\tHc{m+1}{\bbS}(\om)+\RotRott\tHc{m+2}{\bbS}(\om),$$
and all applied operations are continuous, showing the existence 
of $\Qot_{\Divcs}^{m}$ and $\widetilde{\Qot}_{\RotRotcst}^{m}$.

$\bullet$ Let $\tN\in\tH{n}{\bbS}(\Div,\om)$. 
As before we get
$$\varphi_{\ell}\tN
\in\tH{n}{\bbS}(\Div,\om_{\ell})
=\tH{n+1}{\bbS}(\om_{\ell})+\RotRott\tH{n+2}{\bbS}(\om_{\ell})$$
and $\varphi_{\ell}\tN=\tN_{\ell}+\RotRott\tM_{\ell}$
with $\tN_{\ell}\in\tH{n+1}{\bbS}(\om_{\ell})$ and $\tM_{\ell}\in\tH{n+2}{\bbS}(\om_{\ell})$.
In $\om_{\ell}$ we observe with Lemma \ref{cutlem}
\begin{align*}
\varphi_{\ell}\tN
&=\varphi_{\ell}\phi_{\ell}\tN
=\phi_{\ell}\tN_{\ell}
+\phi_{\ell}\RotRott\tM_{\ell}\\
&=\underbrace{\phi_{\ell}\tN_{\ell}
-\Psi(\na\na\phi_{\ell},\tM_{\ell})}_{=:\tN_{1,\ell}}
-\underbrace{2\sym\big((\spn\na\phi_{\ell})\Rot\tM_{\ell}\big)}_{=:\tN_{2,\ell}}
+\RotRott(\phi_{\ell}\tM_{\ell})\\
&\in\tH{n+1}{\bbS}(\om_{\ell})+\RotRott\tH{n+2}{\bbS}(\om_{\ell}).
\end{align*}
Extending $\tN_{1,\ell}+\tN_{2,\ell}$ and $\phi_{\ell}\tM_{\ell}$ by zero gives
$\widetilde\tN_{\ell}\in\tH{n+1}{\bbS}(\om)$ 
and $\widetilde\tM_{\ell}\in\tH{n+2}{\bbS}(\om)$ with 
$$\tN
=\sum_{\ell=1}^{L}\varphi_{\ell}\tN
=\sum_{\ell=1}^{L}\widetilde\tN_{\ell}+\RotRott\sum_{\ell=1}^{L}\widetilde\tM_{\ell}
\in\tH{n+1}{\bbS}(\om)+\RotRott\tH{n+2}{\bbS}(\om),$$
and all applied operations are continuous, showing the existence 
of $\Qot_{\Divs}^{n}$ and $\widetilde{\Qot}_{\RotRotst}^{n}$.

$\bullet$ Let $\tM\in\tHc{m,m-1}{\bbS}(\RotRott,\om)$ and $m\geq1$. 
Lemma \ref{cutlem}, approximation, and Corollary \ref{regdecononstandcor} show
$$\varphi_{\ell}\tM
\in\tHc{m,m-1}{\bbS}(\RotRott,\om_{\ell})
=\tHc{m+1}{\bbS}(\om_{\ell})+\symna\vHc{m+1}{}(\om_{\ell}).$$
Thus $\varphi_{\ell}\tM=\tM_{\ell}+\symna\vv_{\ell}$
with $\tM_{\ell}\in\tHc{m+1}{\bbS}(\om_{\ell})$ and $\vv_{\ell}\in\vHc{m+1}{}(\om_{\ell})$.
Extending $\tM_{\ell}$ and $\vv_{\ell}$ by zero gives
$\widetilde\tM_{\ell}\in\tHc{m+1}{\bbS}(\om)$ and $\widetilde\vv_{\ell}\in\vHc{m+1}{}(\om)$ with 
$$\tM
=\sum_{\ell=1}^{L}\varphi_{\ell}\tM
=\sum_{\ell=1}^{L}\widetilde\tM_{\ell}+\symna\sum_{\ell=1}^{L}\widetilde\vv_{\ell}
\in\tHc{m+1}{\bbS}(\om)+\symna\vHc{m+1}{}(\om),$$
and all applied operations are continuous, i.e.,
$\Qot_{\RotRotcst}^{m,m-1}$ and $\widetilde{\Qot}_{\symnac}^{m,m-1}$ exist.

$\bullet$ Let $\tM\in\tH{n,n-1}{\bbS}(\RotRott,\om)$.
By Lemma \ref{cutlem}, approximation, and Corollary \ref{regdecononstandcor} we get
$$\varphi_{\ell}\tM
\in\tH{n,n-1}{\bbS}(\RotRott,\om_{\ell})
=\tH{n+1}{\bbS}(\om_{\ell})+\symna\vH{n+1}{}(\om_{\ell})$$
and $\varphi_{\ell}\tM=\tM_{\ell}+\symna\vv_{\ell}$
with $\tM_{\ell}\in\tH{n+1}{\bbS}(\om_{\ell})$ and $\vv_{\ell}\in\vH{n+1}{}(\om_{\ell})$.
In $\om_{\ell}$ we observe
\begin{align*}
\varphi_{\ell}\tM
&=\varphi_{\ell}\phi_{\ell}\tM
=\phi_{\ell}\tM_{\ell}
+\phi_{\ell}\symna\vv_{\ell}\\
&=\underbrace{\phi_{\ell}\tM_{\ell}
-\sym(\na\phi_{\ell}\,\vv_{\ell}^{\top})}_{=:\tM_{1,\ell}}
+\symna(\phi_{\ell}\vv_{\ell})
\in\tH{n+1}{\bbS}(\om_{\ell})+\symna\vH{n+1}{}(\om_{\ell}).
\end{align*}
Extending $\tM_{1,\ell}$ and $\phi_{\ell}\vv_{\ell}$ by zero gives
$\widetilde\tM_{\ell}\in\tH{n+1}{\bbS}(\om)$ 
and $\widetilde\vv_{\ell}\in\vH{n+1}{}(\om)$ with 
$$\tM
=\sum_{\ell=1}^{L}\varphi_{\ell}\tM
=\sum_{\ell=1}^{L}\widetilde\tM_{\ell}+\symna\sum_{\ell=1}^{L}\widetilde\vv_{\ell}
\in\tH{n+1}{\bbS}(\om)+\symna\vH{n+1}{}(\om),$$
and all applied operations are continuous, i.e.,
$\Qot_{\RotRotst}^{n,n-1}$ and $\widetilde{\Qot}_{\symna}^{n,n-1}$ exist.
\end{proof}

\begin{rem}
\label{rem:regdecoformulaela1}
Let $m\in \n$ and $n\in \nz$. 
Since we have
$$\tHc{m}{\bbS}(\RotRott,\om)\subset\tHc{m,m-1}{\bbS}(\RotRott,\om),\quad
\tH{n}{\bbS}(\RotRott,\om)\subset\tH{n,n-1}{\bbS}(\RotRott,\om),$$
regular decompositions of $\tHc{m}{\bbS}(\RotRott,\om)$ 
and $\tH{n}{\bbS}(\RotRott,\om)$ follow:
\begin{align*}
\tHc{m}{\bbS}(\RotRott,\om)
&=\tHc{m+1,m}{\bbS}(\RotRott,\om)+\symna\vHc{m+1}{}(\om)
=\tHc{m+2}{\bbS}(\om)+\symna\vHc{m+1}{}(\om),\\
\tH{n}{\bbS}(\RotRott,\om)
&=\tH{n+1,n}{\bbS}(\RotRott,\om)+\symna\vH{n+1}{}(\om)
=\tH{n+2}{\bbS}(\om)+\symna\vH{n+1}{}(\om)
\end{align*}
hold by applying Theorem \ref{regdecopotgenlip} twice,
first on the level $(m,m-1)$ (resp.~$(n,n-1)$) and then on the level $(m+1,m)$ (resp.~$(n+1,n)$). 
\end{rem}

In summary, the results in Theorem \ref{regdecopotgenlip} and Remark \ref{rem:regdecoformulaela1} 
show that we have regular decompositions for all involved domains in the domain complexes 
\eqref{ElastComplfullVar1} and \eqref{ElastComplfullVar2} 
and $m,n\in\nz$ except for the space
$$\tHc{}{\bbS}(\RotRott,\om),$$
which we shall address in a forthcoming publication
even for partial boundary conditions.

Now we will finally show the following crucial compact embedding results.

\begin{theo}[Compact embeddings for the elasticity complex]
\label{cptembela1}
Let $m\in\nz$.
The embeddings
\begin{align*}
\text{\bf(i)}&
&
\vHc{m+1}{}(\om)
&\cpt\vHc{m}{}(\om),\\
\text{\bf(ii)}&
&
\tHc{m}{\bbS}(\RotRott,\om)\cap\tH{m}{\bbS}(\Div,\om)
&\cpt\tHc{m}{\bbS}(\om),\\
\text{\bf(iii)}&
&
\tHc{m}{\bbS}(\Div,\om)\cap\tH{m}{\bbS}(\RotRott,\om)
&\cpt\tHc{m}{\bbS}(\om),\\
\text{\bf(iv)}&
&
\vH{m+1}{}(\om)
&\cpt\vH{m}{}(\om)
\end{align*}
are compact and all ranges in the elasticity domain complexes are closed.
\end{theo}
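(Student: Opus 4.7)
The plan is to prove (i) and (iv) via the classical Rellich selection theorem, to handle (ii) and (iii) at the base level $m=0$ through the abstract compactness criteria Theorem~\ref{cptembmaintheo} and its dual Corollary~\ref{cptembmaintheodual} fed by the regular decompositions from Theorem~\ref{regdecopotgenlip}, and to lift (ii) and (iii) to $m\geq 1$ by a differentiation-plus-diagonalization reduction to the $m=0$ case. The final clause on closed ranges then follows automatically from these compact embeddings by the FA-ToolBox, e.g., \cite[Theorem 2.9]{paulyzulehner2019a}.

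For (iii) at $m=0$ I would take the second middle position in \eqref{ElastComplfull}, namely $\Az:=\RotRotcst$, $\Ao:=\Divcs$, so that $\H{}{1}=\tL{2}{\bbS}(\om)$, $D(\Ao)=\tHc{}{\bbS}(\Div,\om)$, and $D(\Azs)=\tH{}{\bbS}(\RotRott,\om)$. Theorem~\ref{regdecopotgenlip} at $m=0$ supplies the regular decomposition $\tHc{}{\bbS}(\Div,\om)=\tHc{1}{\bbS}(\om)+\RotRott\tHc{2}{\bbS}(\om)$ with both summand spaces Rellich-compact in $\tL{2}{\bbS}(\om)$, and Theorem~\ref{cptembmaintheo} applies directly. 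For (ii) at $m=0$ the symmetric route would demand a regular decomposition of $D(\Ao)=\tHc{}{\bbS}(\RotRott,\om)$, which is precisely the case Theorem~\ref{regdecopotgenlip} omits and whose absence is flagged right before the statement. I would therefore dualize and use Corollary~\ref{cptembmaintheodual}: take $\Az:=\symnac$, $\Ao:=\RotRotcst$ on the first middle $\tL{2}{\bbS}(\om)$, giving $D(\Ao)\cap D(\Azs)=\tHc{}{\bbS}(\RotRott,\om)\cap\tH{}{\bbS}(\Div,\om)$, and feed it the no-boundary-condition decomposition $\tH{}{\bbS}(\Div,\om)=\tH{1}{\bbS}(\om)+\RotRott\tH{2}{\bbS}(\om)$ from Theorem~\ref{regdecopotgenlip} at $n=0$, with Rellich once more providing the two required compact embeddings.

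For $m\geq 1$ I would argue as follows. Given a bounded sequence $(\tN_k)$ in, say, $\tHc{m}{\bbS}(\Div,\om)\cap\tH{m}{\bbS}(\RotRott,\om)$, and any multi-index $\alpha$ with $|\alpha|\leq m$, partial differentiation commutes with $\Div$ and $\RotRott$, so $(\partial^\alpha\tN_k)$ sits boundedly in $\tL{2}{\bbS}(\om)$ together with its divergence in $\vL{2}{}(\om)$ and its $\RotRott$ in $\tL{2}{\bbS}(\om)$. A routine density argument, using that $\partial^\alpha$ maps $\tCc{\infty}{\bbS}(\om)$ into itself, shows that $\partial^\alpha$ preserves the closure-of-test-tensors boundary condition; hence $(\partial^\alpha\tN_k)\subset\tHc{}{\bbS}(\Div,\om)\cap\tH{}{\bbS}(\RotRott,\om)$ is bounded. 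The base case (iii) extracts an $\tL{2}{\bbS}(\om)$-convergent subsequence, and a finite diagonal procedure over the multi-indices $|\alpha|\leq m$ delivers a single subsequence whose every derivative up to order $m$ converges in $\tL{2}{\bbS}(\om)$, i.e., a $\tHc{m}{\bbS}(\om)$-convergent subsequence. The same argument, with the roles of $\Div$ and $\RotRott$ swapped, settles (ii) for $m\geq 1$.

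The one genuine obstacle is the asymmetry at $m=0$: because $\RotRott$ is a second-order operator and boundary conditions interact nontrivially with cut-offs through the identity \eqref{RotRotTformula} of Lemma~\ref{cutlem}, Theorem~\ref{regdecopotgenlip} does not produce a regular decomposition of $\tHc{}{\bbS}(\RotRott,\om)$ within the scope of this paper, and this is precisely what forces dualization via Corollary~\ref{cptembmaintheodual} in (ii) rather than a direct appeal to Theorem~\ref{cptembmaintheo}. Everything else is bookkeeping once the regular decompositions of Theorem~\ref{regdecopotgenlip} and the base-level compactness are in hand.
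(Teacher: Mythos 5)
Your plan coincides with the paper's proof at every place where it is precise: (i) and (iv) via Rellich, (iii) at $m=0$ by feeding the regular decomposition $\tHc{}{\bbS}(\Div,\om)=\tHc{1}{\bbS}(\om)+\RotRott\tHc{2}{\bbS}(\om)$ from Theorem~\ref{regdecopotgenlip} into Theorem~\ref{cptembmaintheo}, and (ii) at $m=0$ by the dual criterion Corollary~\ref{cptembmaintheodual} using $\tH{}{\bbS}(\Div,\om)=\tH{1}{\bbS}(\om)+\RotRott\tH{2}{\bbS}(\om)$. You also correctly identify the asymmetry that forces the dualization — the absence in this paper of a regular decomposition of $\tHc{}{\bbS}(\RotRott,\om)$ — which is exactly the obstruction the authors flag just before the theorem.

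The one genuine gap is in your lift to $m\geq 1$, specifically the step where you assert that $\partial^\alpha$ preserves the homogeneous boundary condition by ``a routine density argument, using that $\partial^\alpha$ maps $\tCc{\infty}{\bbS}(\om)$ into itself.'' The space $\tHc{m}{\bbS}(\RotRott,\om)$ is defined in Section~\ref{sec:ElastCompl} as the \emph{intersection} $\{\tM\in D(\RotRotcst)\cap\tHc{m}{\bbS}(\om):\RotRott\tM\in\tHc{m}{\bbS}(\om)\}$, not as the closure of $\tCc{\infty}{\bbS}(\om)$ in a graph norm involving both $\|\cdot\|_{\tH{m}{}}$ and $\|\RotRott\cdot\|_{\tH{m}{}}$. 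A test-tensor sequence approximating $\tM$ in $D(\RotRotcst)$ controls $\RotRott$ in $\tL{2}{}$ but not $\partial^\alpha$ for $|\alpha|=m$; a test-tensor sequence approximating $\tM$ in $\tHc{m}{\bbS}(\om)$ controls $\partial^\alpha$ but not $\RotRott\partial^\alpha$; and there is no immediate reason a single sequence does both. The paper sidesteps the density question entirely: from $\tM_k\in D(\RotRotcst)$ and $\tM_k,\RotRott\tM_k\in\tHc{m}{\bbS}(\om)$ it integrates by parts three times against an arbitrary $\tPhi\in\tCc{\infty}{}(\reals^3)$ (no support constraint in $\om$) to obtain
\[
\scp{\p^{\alpha}\tM_{k}}{\RotRott\tPhi}_{\tL{2}{\bbS}(\om)}
=\scp{\RotRott\p^{\alpha}\tM_{k}}{\tPhi}_{\tL{2}{\bbS}(\om)},
\]
which is precisely the weak (adjoint) characterization of membership in $\tHc{}{\bbS}(\RotRott,\om)$, and the analogous computation with $\Div$ in place of $\RotRott$ settles (iii). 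You should replace the density heuristic with this integration-by-parts argument, or alternatively prove the nontrivial joint-density statement that $\tCc{\infty}{\bbS}(\om)$ is dense in the intersection space with respect to the combined norm — which, because $\RotRott$ is second order, is not a consequence of the separate densities already available.
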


\begin{proof}
(i) and (iv) are just Rellich's selection theorems.
With Theorem \ref{regdecopotgenlip} and Rellich's selection theorem
we can apply Theorem \ref{cptembmaintheo} 
(with $\Ao=\Divcs$)
and Corollary \ref{cptembmaintheodual}
(with $\Azs=\Divs$)
and get the compact embedding results (iii) and (ii)
for the case $m=0$.
For (ii) and $m\geq1$, let $(\tM_{k})\subset\tHc{m}{\bbS}(\RotRott,\om)\cap\tH{m}{\bbS}(\Div,\om)$
be bounded in $\tHc{m}{\bbS}(\RotRott,\om)\cap\tH{m}{\bbS}(\Div,\om)$.
By assumption (w.l.o.g.) $(\tM_{k})\subset\tHc{m}{\bbS}(\om)$
is a Cauchy sequence in $\tHc{m-1}{\bbS}(\om)$. 
Moreover, for all $|\alpha|=m$ we have
$\p^{\alpha}\tM_{k}\in\tH{}{\bbS}(\RotRott,\om)\cap\tH{}{\bbS}(\Div,\om)$. 
As $\tM_{k},\RotRott\tM_{k}\in\tHc{m}{\bbS}(\om)$ and $\tM_{k}\in D(\RotRotcst)$
we observe for all $\tPhi\in\tCc{\infty}{}(\reals^{3})$
\begin{align*}
\scp{\p^{\alpha}\tM_{k}}{\RotRott\tPhi}_{\tL{2}{\bbS}(\om)}
&=\pm\scp{\tM_{k}}{\RotRott\p^{\alpha}\tPhi}_{\tL{2}{\bbS}(\om)}\\
&=\pm\scp{\RotRott\tM_{k}}{\p^{\alpha}\tPhi}_{\tL{2}{\bbS}(\om)}
=\scp{\RotRott\p^{\alpha}\tM_{k}}{\tPhi}_{\tL{2}{\bbS}(\om)}.
\end{align*}
Thus $\p^{\alpha}\tM_{k}\in\tHc{}{\bbS}(\RotRott,\om)\cap\tH{}{\bbS}(\Div,\om)$
and hence w.l.o.g. $(\p^{\alpha}\tM_{k})$ is a Cauchy sequence in $\tL{2}{\bbS}(\om)$
by Theorem \ref{cptembela1} for $m=0$.
We conclude that $(\tM_{k})$ is a Cauchy sequence in $\tHc{m}{\bbS}(\om)$.
Analogously we prove (iii) for $m\geq1$.
\end{proof}

\begin{rem}[Compact embeddings for the elasticity complex]
\label{rem:cptembela1}
Theorem \ref{cptembela1} may also be proved by a variant of 
Theorem \ref{cptembmaintheo} (or Corollary \ref{cptembmaintheodual}).
For this, let, e.g., 
$$(\tM_{k})\subset\tHc{m}{\bbS}(\Div,\om)\cap\tH{m}{\bbS}(\RotRott,\om)$$
be a bounded sequence. In particular, $(\tM_{k})$ is bounded in $\tH{m,m-1}{\bbS}(\RotRott,\om)$.
According to Theorem \ref{regdecopotgenlip} we decompose
$\tM_{k}=\widetilde{\tM}_{k}+\symna\vv_{k}$
with $\widetilde{\tM}_{k}\in\tH{m+1}{\bbS}(\om)$
and $\vv_{k}\in\vH{m+1}{}(\om)$. By the boundedness of the potentials,
$(\widetilde{\tM}_{k})$ and $(\vv_{k})$ are bounded in $\tH{m+1}{\bbS}(\om)$ and $\vH{m+1}{}(\om)$
and hence w.l.o.g. converge in $\tH{m}{\bbS}(\om)$ and $\vH{m}{}(\om)$, respectively.
Let $\alpha$ with $|\alpha|\leq m$. Then
$$\p^{\alpha}\tM_{k}\in\tH{}{\bbS}(\Div,\om)\cap\tH{}{\bbS}(\RotRott,\om).$$
Moreover, as $\tM_{k}\in\tHc{m}{\bbS}(\om)\cap D(\Divcs)$
and $\Div\tM_{k}\in\vHc{m}{}(\om)$
we observe for all $\vphi\in\vCc{\infty}{}(\reals^{3})$
\begin{align*}
\scp{\p^{\alpha}\tM_{k}}{\symna\vphi}_{\tL{2}{\bbS}(\om)}
&=\pm\scp{\tM_{k}}{\symna\p^{\alpha}\tPhi}_{\tL{2}{\bbS}(\om)}\\
&=\mp\scp{\Div\tM_{k}}{\p^{\alpha}\vphi}_{\vL{2}{}(\om)}
=-\scp{\Div\p^{\alpha}\tM_{k}}{\tPhi}_{\vL{2}{}(\om)}.
\end{align*}
Thus $\p^{\alpha}\tM_{k}\in\tHc{}{\bbS}(\Div,\om)\cap\tH{}{\bbS}(\RotRott,\om)$.
With $\tM_{k,l}:=\tM_{k}-\tM_{l}$, 
$\widetilde{\tM}_{k,l}:=\widetilde{\tM}_{k}-\widetilde{\tM}_{l}$, and $\vv_{k,l}:=\vv_{k}-\vv_{l}$
we get
\begin{align*}
\norm{\tM_{k,l}}_{\tHc{m}{\bbS}(\om)}^2
&=\scp{\tM_{k,l}}{\widetilde{\tM}_{k,l}+\symna\vv_{k,l}}_{\tHc{m}{\bbS}(\om)}
=\scp{\tM_{k,l}}{\widetilde{\tM}_{k,l}}_{\tH{m}{\bbS}(\om)}
-\scp{\Div\tM_{k,l}}{\vv_{k,l}}_{\vH{m}{}(\om)}\\
&\leq c\big(\norm{\widetilde{\tM}_{k,l}}_{\tH{m}{\bbS}(\om)}+\norm{\vv_{k,l}}_{\vH{m}{}(\om)}\big)\to0,
\end{align*}
finishing the proof.
Alternatively, we can decompose $\tHc{m}{\bbS}(\Div,\om)$ using Theorem \ref{regdecopotgenlip},
which is even simpler.
\end{rem}

Remark \ref{rem:cptembela1} shows that we even have the following compact  embeddings:

\begin{theo}[More compact embeddings for the elasticity complex]
\label{cptembela2}
Let $m\in\nz$.
The embeddings
$$\tHc{m}{\bbS}(\Div,\om)\cap\tH{m,m-1}{\bbS}(\RotRott,\om)
\cpt\tHc{m}{\bbS}(\om)$$
are compact. In particular, for $m=0$
$$\tHc{}{\bbS}(\Div,\om)\cap\tH{0,-1}{\bbS}(\RotRott,\om)
\cpt\tL{2}{\bbS}(\om)$$
is compact. Similarly, for $m\geq1$ also the embeddings
$$\tHc{m,m-1}{\bbS}(\RotRott,\om)\cap\tH{m}{\bbS}(\Div,\om)
\cpt\tHc{m}{\bbS}(\om)$$
are compact.
\end{theo}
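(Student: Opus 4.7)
The plan is to carry out the argument already sketched in Remark \ref{rem:cptembela1}, observing that it only requires the existence of the regular decomposition at level $(m,m-1)$ guaranteed by Theorem \ref{regdecopotgenlip}; the higher derivatives of $\RotRott\tM_k$ are never explicitly invoked. Concretely, for the first embedding let $(\tM_k)$ be bounded in $\tHc{m}{\bbS}(\Div,\om)\cap\tH{m,m-1}{\bbS}(\RotRott,\om)$. First I would decompose $\tM_k=\widetilde{\tM}_k+\symna\vv_k$ with $(\widetilde{\tM}_k)$ bounded in $\tH{m+1}{\bbS}(\om)$ and $(\vv_k)$ bounded in $\vH{m+1}{}(\om)$ by Theorem \ref{regdecopotgenlip}, and then use Rellich's selection theorem to extract subsequences converging in $\tH{m}{\bbS}(\om)$ and $\vH{m}{}(\om)$, respectively.

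With $\tM_{k,l}:=\tM_k-\tM_l$ and analogous shorthand for the components, the key estimate would be
\begin{align*}
\norm{\tM_{k,l}}_{\tHc{m}{\bbS}(\om)}^2
&=\scp{\tM_{k,l}}{\widetilde{\tM}_{k,l}}_{\tH{m}{\bbS}(\om)}
+\scp{\tM_{k,l}}{\symna\vv_{k,l}}_{\tH{m}{\bbS}(\om)},
\end{align*}
and the second term would be handled by integrating by parts order by order: since $\symna$ commutes with $\partial^{\alpha}$ and $\partial^{\alpha}\tM_{k,l}\in\tHc{}{\bbS}(\Div,\om)$ for $|\alpha|\leq m$ (via approximation by $\tCc{\infty}{\bbS}(\om)$-tensors), one obtains $\scp{\tM_{k,l}}{\symna\vv_{k,l}}_{\tH{m}{\bbS}(\om)}=-\scp{\Div\tM_{k,l}}{\vv_{k,l}}_{\vH{m}{}(\om)}$. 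Both remaining pairings are then bounded by $c\bigl(\norm{\widetilde{\tM}_{k,l}}_{\tH{m}{\bbS}(\om)}+\norm{\vv_{k,l}}_{\vH{m}{}(\om)}\bigr)\to0$, which shows that $(\tM_k)$ is Cauchy in $\tHc{m}{\bbS}(\om)$.

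For the dual embedding with $m\geq1$, the strategy is identical but employs the regular decomposition $\tHc{m,m-1}{\bbS}(\RotRott,\om)=\tHc{m+1}{\bbS}(\om)+\symna\vHc{m+1}{}(\om)$ from Theorem \ref{regdecopotgenlip}; the potentials now carry the homogeneous boundary conditions, and the integration by parts is legitimated instead by $\partial^{\alpha}\vv_{k,l}\in\vHc{1}{}(\om)$ together with $\partial^{\alpha}\tM_{k,l}\in\tH{}{\bbS}(\Div,\om)$, yielding the same estimate.

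The hard part will be the careful justification that differentiating $m$ times preserves the relevant Dirichlet-type boundary condition in the distributional sense, that is, $\partial^{\alpha}\tM_{k,l}\in\tHc{}{\bbS}(\Div,\om)$ in the primal case (respectively $\partial^{\alpha}\vv_{k,l}\in\vHc{1}{}(\om)$ in the dual case), so that the Green formula for the pair $(\symna,\Div)$ applies. This is routine via approximation by test tensors, and it closely parallels the commutator computation already spelled out for the $m\geq1$ step in the proof of Theorem \ref{cptembela1}.
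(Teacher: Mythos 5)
Your proposal is correct and matches the paper's own argument: Theorem \ref{cptembela2} is justified in the paper precisely by the observation that the Cauchy-sequence argument in Remark \ref{rem:cptembela1} only ever uses the decomposition of $\tH{m,m-1}{\bbS}(\RotRott,\om)$ (resp.\ $\tHc{m,m-1}{\bbS}(\RotRott,\om)$) from Theorem \ref{regdecopotgenlip} and the $\Div$-side integration by parts, never the full $\tH{m}{\bbS}(\RotRott,\om)$ regularity. Your identification that in the dual case ($m\ge1$) the justification of $\p^{\alpha}\vv_{k,l}\in\vHc{1}{}(\om)$ is automatic (the potential already carries the boundary condition), whereas in the primal case one must verify $\p^{\alpha}\tM_{k,l}\in\tHc{}{\bbS}(\Div,\om)$ via the distributional commutation argument, is also in line with the paper's reasoning.
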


From Corollary \ref{regdecompregpot} we immediately obtain the existence of regular potentials 
also in the case of general strong Lipschitz domains.

\begin{cor}[Regular potentials]
\label{regdecopotgenlip2}
Let $m,n\in\nz$.
The regular potential representations in Corollary \ref{cor1} hold in the sense
\begin{align*}
\RotRott\tHc{m,m-1}{\bbS}(\RotRott,\om)
&=\RotRott\tHc{m+1}{\bbS}(\om),\quad m\geq1,\\
\RotRott\tHc{m}{\bbS}(\RotRott,\om)
=\RotRott\tHc{m+1,m}{\bbS}(\RotRott,\om)
&=\RotRott\tHc{m+2}{\bbS}(\om),\quad m\geq1,\\
\RotRott\tH{n,n-1}{\bbS}(\RotRott,\om)
&=\RotRott\tH{n+1}{\bbS}(\om),\\
\RotRott\tH{n}{\bbS}(\RotRott,\om)
=\RotRott\tH{n+1,n}{\bbS}(\RotRott,\om)
&=\RotRott\tH{n+2}{\bbS}(\om),\\
\vHc{m}{\bot_{\RM}}(\om)
=\Div\tHc{m}{\bbS}(\Div,\om)
&=\Div\tHc{m+1}{\bbS}(\om),\\
\vH{n}{}(\om)
=\Div\tH{n}{\bbS}(\Div,\om)
&=\Div\tH{n+1}{\bbS}(\om)
\end{align*}
with linear and bounded regular potential operators, cf.~Theorem \ref{regpottheotoptriv},
\begin{align*}
\Pot_{\RotRotcst}^{m,m-1}
:\RotRott\tHc{m,m-1}{\bbS}(\RotRott,\om)
&\To\tHc{m+1}{\bbS}(\om),\quad
m\geq1,&\\
\Pot_{\RotRotcst}^{m}
:\RotRott\tHc{m}{\bbS}(\RotRott,\om)
&\To\tHc{m+2}{\bbS}(\om),\quad
m\geq1,&\\
\Pot_{\RotRotst}^{n,n-1}
:\RotRott\tH{n,n-1}{\bbS}(\RotRott,\om)
&\To\tH{n+1}{\bbS}(\om),\\
\Pot_{\RotRotst}^{n}
:\RotRott\tH{n}{\bbS}(\RotRott,\om)
&\To\tH{n+2}{\bbS}(\om),\\
\Pot_{\Divcs}^{m}
:\vHc{m}{\bot_{\RM}}(\om)
&\To\tHc{m+1}{\bbS}(\om),\\
\Pot_{\Divs}^{n}
:\vH{n}{}(\om)
&\To\tH{n+1}{\bbS}(\om).
\end{align*}
\end{cor}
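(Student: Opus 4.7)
The plan is to apply the abstract result Corollary \ref{regdecompregpot} (and its dual Corollary \ref{regdecompregpotdual}) to each of the regular decompositions already established in Theorem \ref{regdecopotgenlip} and Remark \ref{rem:regdecoformulaela1}. Since the statement announces that the conclusion is ``immediate'' from those ingredients, no new analytic input is required; the work is essentially one of bookkeeping.

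First, I would invoke Theorem \ref{cptembela1} to conclude that every operator appearing in the elasticity domain complexes \eqref{ElastComplfullVar1} and \eqref{ElastComplfullVar2} has closed range. This is the prerequisite for Corollary \ref{regdecompregpot}: once the range is closed, the reduced operator $\cA$ is bijective onto its range with a bounded inverse by the closed graph theorem, so the composition $\Pot_{\A}=\Qot_{\A}\cA^{-1}$ is well-defined and bounded.

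Second, for each line of the corollary I would feed the matching decomposition into this scheme. For the divergence lines, the decompositions $\tHc{m}{\bbS}(\Div,\om)=\tHc{m+1}{\bbS}(\om)+\RotRott\tHc{m+2}{\bbS}(\om)$ and the analogous one for $\tH{n}{\bbS}(\Div,\om)$ from Theorem \ref{regdecopotgenlip} yield $\Pot_{\Divcs}^{m}=\Qot_{\Divcs}^{m}\cA_{\Divcs}^{-1}$ and $\Pot_{\Divs}^{n}=\Qot_{\Divs}^{n}\cA_{\Divs}^{-1}$, landing in $\tHc{m+1}{\bbS}(\om)$ and $\tH{n+1}{\bbS}(\om)$ respectively. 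The range identities $R(\Divcs)=\vHc{m}{\bot_{\RM}}(\om)$ and $R(\Divs)=\vH{n}{}(\om)$ follow from the complex property (the kernel of the next adjoint is $\RM$ on one side and $\{\boldsymbol{0}\}$ on the other, cf.~\eqref{ElastComplfull}) together with closedness and the fact that a field in the range automatically inherits the ambient Sobolev regularity from the domain of $\Div$. For the $\RotRott$-lines at regularity level $(m,m-1)$, Theorem \ref{regdecopotgenlip} gives the decomposition with $\Qot_{\RotRotcst}^{m,m-1}$ landing in $\tHc{m+1}{\bbS}(\om)$, yielding the potential $\Pot_{\RotRotcst}^{m,m-1}$; for the sharper decomposition of $\tHc{m}{\bbS}(\RotRott,\om)$ and $\tH{n}{\bbS}(\RotRott,\om)$ one uses Remark \ref{rem:regdecoformulaela1}, which iterates Theorem \ref{regdecopotgenlip} twice to produce decompositions landing in $\tHc{m+2}{\bbS}(\om)$ and $\tH{n+2}{\bbS}(\om)$, and Corollary \ref{regdecompregpot} then furnishes $\Pot_{\RotRotcst}^{m}$ and $\Pot_{\RotRotst}^{n}$ with the sharper regularity. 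In each of these cases the representation $\RotRott \tHc{m}{\bbS}(\RotRott,\om)=\RotRott\tHc{m+1,m}{\bbS}(\RotRott,\om)=\RotRott\tHc{m+2}{\bbS}(\om)$ follows from the chain of inclusions combined with surjectivity of $\Pot_{\RotRotcst}^{m}$ onto the range.

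There is no real obstacle here; the only thing that requires care is the indexing, i.e.~matching each operator ($\Divcs$, $\Divs$, $\RotRotcst$, $\RotRotst$) to the correct regular decomposition from Theorem \ref{regdecopotgenlip} or Remark \ref{rem:regdecoformulaela1} so that the target space of the potential has the advertised Sobolev regularity, and distinguishing the relaxed $(m,m-1)$-domains from their more regular counterparts. Once this pairing is made explicit, each invocation of Corollary \ref{regdecompregpot} or \ref{regdecompregpotdual} (the latter applied on the dual side to cover the operators without boundary conditions) closes the argument line by line.
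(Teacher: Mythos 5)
Your proposal matches the paper's intended route: the paper offers no separate proof and simply states that Corollary~\ref{regdecopotgenlip2} follows ``immediately'' from Corollary~\ref{regdecompregpot}, applied to the regular decompositions established in Theorem~\ref{regdecopotgenlip} and Remark~\ref{rem:regdecoformulaela1}, with Theorem~\ref{cptembela1} supplying the closed ranges needed for the reduced operator to be invertible. Your indexing is correct (the $(m,m-1)$-spaces pair with the single-pass decomposition of Theorem~\ref{regdecopotgenlip}, the sharper $m$-spaces with the double-pass decomposition of Remark~\ref{rem:regdecoformulaela1}, and the $\Div$-spaces map to $\tHc{m+1}{\bbS}(\om)$ resp.\ $\tH{n+1}{\bbS}(\om)$), and your use of the dual corollary for the boundary-condition-free side is a legitimate bookkeeping choice.

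One point is slightly underargued in your write-up (and, to be fair, also not spelled out in the paper): identifying the range of $\Divcs$ at regularity level $m\geq1$ with $\vHc{m}{\bot_{\RM}}(\om)$ is not purely formal. The closed-range / orthogonal-complement argument you invoke cleanly gives $R(\Divcs)=\vL{2}{\bot_{\RM}}(\om)$ at the base level $m=0$; for $m\geq1$ the $\subseteq$-inclusion is indeed immediate (regularity inherited from $\tHc{m}{\bbS}(\Div,\om)$ plus $\vL{2}{}$-orthogonality to $\RM$ from the complex property), but the $\supseteq$-inclusion requires producing a potential for every $\vv\in\vHc{m}{\bot_{\RM}}(\om)$, which is not what the abstract Corollary~\ref{regdecompregpot} delivers on its own (it only gives a right inverse on the \emph{range}). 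The cleanest way to close this is to note that $\Pot_{\Divcs}^{m}$ from Theorem~\ref{regpottheotoptriv} exists for arbitrary topology by Remark~\ref{regpotremtoptriv}\,(i), so the surjectivity onto $\vHc{m}{\bot_{\RM}}(\om)$ holds without any topological assumption. With that observation added, your argument is complete.
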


Lemma \ref{lem:SpaceRegDeco} yields:

\begin{cor}[Direct regular decompositions]
\label{cor:regdecopotgenlip}
Let $m,n\in\nz$. 
Some of the regular decompositions in Theorem \ref{regdecopotgenlip} and Remark \ref{rem:regdecoformulaela1} are direct.
More precisely, 
\begin{align*}
\tHc{m,m-1}{\bbS}(\RotRott,\om)
&=R(\Pot_{\RotRotcst}^{m,m-1})
\dotplus\tHc{m}{\bbS,0}(\RotRott,\om),\quad
m\ge1,\\
\tHc{m}{\bbS}(\RotRott,\om)
&=R(\Pot_{\RotRotcst}^{m})
\dotplus\tHc{m}{\bbS,0}(\RotRott,\om),\quad
m\ge1,\\
\tH{n,n-1}{\bbS}(\RotRott,\om)
&=R(\Pot_{\RotRotst}^{n,n-1})
\dotplus\tH{n}{\bbS,0}(\RotRott,\om),\\
\tH{n}{\bbS}(\RotRott,\om)
&=R(\Pot_{\RotRotst}^{n})
\dotplus\tH{n}{\bbS,0}(\RotRott,\om),\\
\tHc{m}{\bbS}(\Div,\om)
&=R(\Pot_{\Divcs}^{m})
\dotplus\tHc{m}{\bbS,0}(\Div,\om),\\
\tH{n}{\bbS}(\Div,\om)
&=R(\Pot_{\Divs}^{n})
\dotplus\tH{n}{\bbS,0}(\Div,\om).
\end{align*}
\end{cor}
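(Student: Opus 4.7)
The plan is to apply Lemma \ref{lem:SpaceRegDeco} (potential decomposition) separately to each of the six displayed identities, using the regular potential operators furnished by Corollary \ref{regdecopotgenlip2}.

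For each identity I would first identify the relevant densely defined and closed operator $\A$ whose domain is the Hilbert space on the left-hand side. For the first four decompositions, $\A$ is $\RotRott$ (more precisely $\RotRotcst$ or $\RotRotst$) restricted to the respective Sobolev space; for the last two it is $\Divcs$ (resp.\ $\Divs$) on $\tHc{m}{\bbS}(\Div,\om)$ (resp.\ $\tH{n}{\bbS}(\Div,\om)$). In each case the kernel of $\A$ coincides by definition with the kernel space on the right-hand side, and the corresponding operator from Corollary \ref{regdecopotgenlip2} is a bounded right inverse of $\A$ whose image lies in $D(\A)$ thanks to the extra regularity of the potential. This verifies the hypothesis of Definition \ref{def:potdef}.

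With this setup, Lemma \ref{lem:SpaceRegDeco} immediately yields $D(\A) = R(\Pot_{\A}) \dotplus N(\A)$, which is precisely the claim. The directness is built in: if $x = \Pot_{\A} y$ lies in $N(\A)$, then $0 = \A x = \A \Pot_{\A} y = y$, hence $x = 0$; and every $x \in D(\A)$ decomposes as $x = \Pot_{\A} \A x + (x - \Pot_{\A} \A x)$ with the second summand in $N(\A)$ by the right-inverse identity.

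The only delicate point concerns the $\Div$-cases, where the potential $\Pot_{\Divcs}^{m}$ is defined only on $\vHc{m}{\bot_{\RM}}(\om)$ rather than on all of $\vHc{m}{}(\om)$. To apply the scheme above I would verify that $\Div x \in \vHc{m}{\bot_{\RM}}(\om)$ for every $x \in \tHc{m}{\bbS}(\Div,\om)$. This follows by testing with an arbitrary $\vr = \tQ\vx + \vq \in \RM$ and integrating by parts against the homogeneous boundary condition: $\scp{\Div x}{\vr}_{\vL{2}{}(\om)} = -\scp{x}{\na\vr}_{\tL{2}{}(\om)} = -\scp{x}{\tQ}_{\tL{2}{}(\om)} = 0$, where the last equality uses that $x$ is symmetric and $\tQ$ is skew. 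The $\Divs$-case is analogous. I do not anticipate any genuine obstacle, since the corollary is in essence a bookkeeping consequence of the abstract potential decomposition combined with the bounded right inverses established in the preceding corollary.
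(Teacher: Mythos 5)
Your proof is correct and follows the paper's approach exactly: the corollary is, as the paper indicates, a direct application of Lemma~\ref{lem:SpaceRegDeco} using the regular potential operators from Corollary~\ref{regdecopotgenlip2}. The additional details you spell out --- directness from the right-inverse identity and the $\RM$-orthogonality of $\Div\tN$ for $\tN\in\tHc{m}{\bbS}(\Div,\om)$ --- are correct and match what the paper leaves implicit.
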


Theorem \ref{cptembela1} and Theorem \ref{cptembela2} show:

\begin{theo}[Compact elasticity Hilbert complexes]
\label{cptembela3}
Let $m,n\in\nz$.
The elasticity domain complexes
\begin{equation*}
\xymatrixcolsep{5ex}\xymatrixrowsep{1ex}
\xymatrix{
\{\boldsymbol{0}\} \ar[r]^-{\iota_{\{\boldsymbol{0}\}}} &
\vHc{m+1}{}(\om) \ar[r]^-{\symnac} &
\tHc{m}{\bbS}(\RotRott,\om) \ar[r]^-{\RotRotcst} &
\tHc{m}{\bbS}(\Div,\om) \ar[r]^-{\Divcs} &
\vHc{m}{}(\om) \ar[r]^-{\pi_{\RM}} &
\RM,\\
\{\boldsymbol{0}\} & \ar[l]_-{\pi_{\{\boldsymbol{0}\}}}
\vH{n}{}(\om) & \ar[l]_-{-\Divs}
\tH{n}{\bbS}(\Div,\om) & \ar[l]_-{\RotRotst}
\tH{n}{\bbS}(\RotRott,\om) & \ar[l]_-{-\symna}
\vH{n+1}{}(\om) & \ar[l]_-{\iota_{\RM}}
\RM
}
\end{equation*}
and, for $m\ge 1$
\begin{equation*}
\xymatrixcolsep{3ex}\xymatrixrowsep{1ex}
\xymatrix{
\{\boldsymbol{0}\} \ar[r]^-{\iota_{\{\boldsymbol{0}\}}} &
\vHc{m+1}{}(\om) \ar[r]^-{\symnac} &
\tHc{m,m-1}{\bbS}(\RotRott,\om) \ar[r]^-{\RotRotcst} &
\tHc{m-1}{\bbS}(\Div,\om) \ar[r]^-{\Divcs} &
\vHc{m-1}{}(\om) \ar[r]^-{\pi_{\RM}} &
\RM,\\
\{\boldsymbol{0}\} & \ar[l]_-{\pi_{\{\boldsymbol{0}\}}}
\vH{n-1}{}(\om) & \ar[l]_-{-\Divs}
\tH{n-1}{\bbS}(\Div,\om) & \ar[l]_-{\RotRotst}
\tH{n,n-1}{\bbS}(\RotRott,\om) & \ar[l]_-{-\symna}
\vH{n+1}{}(\om) & \ar[l]_-{\iota_{\RM}}
\RM
}
\end{equation*}
are compact Hilbert complexes, all ranges are \emph{closed}, and the linear and bounded operators 
from Theorem \ref{regdecopotgenlip} and Corollary \ref{regdecopotgenlip2}
are associated regular decomposition and potential operators, respectively.
If $\om$ is additionally topologically trivial, then the elasticity complexes are exact.
\end{theo}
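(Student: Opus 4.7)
The proof is a synthesis step: essentially all the hard analytic work has been carried out in the preceding results, and what remains is to assemble them into the statement of a compact Hilbert complex with the claimed associated operators.

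My plan is to verify the requirements in the definition of a compact Hilbert complex one by one. First I would check the complex property $\A_{l+1}\A_l \subset 0$ for each arrow in both diagrams. For smooth symmetric tensor fields the identities $\RotRott\,\symna = 0$ and $\Divs\,\RotRott = 0$ are classical pointwise computations (collected in Appendix \ref{app:Formulas}); by density of the test fields $\vCc{\infty}{}(\om)$, $\tCc{\infty}{\bbS}(\om)$ in the relevant domains (equipped with the graph norm) and by the fact that the differential operators are closed, the complex property extends to all elements of the primal domain complex. For the dual complex the property $\Aos\Ats \subset 0$ is just the Hilbert space adjoint of $\A_{l+1}\A_l \subset 0$. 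The boundary maps $\iota_{\{\boldsymbol 0\}}$, $\pi_{\RM}$ (and their counterparts) satisfy the complex condition trivially since $\symnac$ annihilates no nonzero vector field other than $\boldsymbol 0$ at the left end and the projection $\pi_{\RM}$ is designed to kill exactly the range condition at the right end (rigid motions sit in the $\vL{2}{}$-kernel of $\symna$, so $\pi_{\RM}\Divcs = 0$ after the natural embedding).

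Next I would establish the compactness of the embeddings $D(\Ao) \cap D(\Azs) \cpt \H{}{1}$ at each position of both complexes. For the first complex this is exactly the content of Theorem \ref{cptembela1}(ii)-(iii) at the middle positions, together with Rellich's theorem (Theorem \ref{cptembela1}(i),(iv)) for the outermost embeddings $\vHc{m+1}{}(\om)\cpt\vHc{m}{}(\om)$ and $\vH{n+1}{}(\om)\cpt\vH{n}{}(\om)$ (the finite-dimensional boundary pieces $\{\boldsymbol 0\}$ and $\RM$ cause no trouble). For the mixed-regularity variant the relevant compact embeddings at the middle $\RotRott$-position are precisely what is supplied by Theorem \ref{cptembela2}, while the $\Div$-position and the outermost positions are handled as before. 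Once the compact embeddings are in hand, the closedness of all ranges follows by abstract functional analysis (closed range theorem applied to the reduced operators, together with the compactness-implies-closed-range-and-finite-cohomology lemma of the FA-ToolBox, e.g.~\cite[Theorem 2.9]{paulyzulehner2019a}). In particular, the Friedrichs/Poincar\'e estimates of Remark \ref{rem:helmtypedeoc1} are automatic.

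The identification of the associated operators is then immediate: the regular decomposition operators produced in Theorem \ref{regdecopotgenlip} (together with Remark \ref{rem:regdecoformulaela1} for the $\tHc{m}{\bbS}(\RotRott,\om)$ and $\tH{n}{\bbS}(\RotRott,\om)$ positions) fit Definition \ref{def:decompdef}, and the regular potential operators in Corollary \ref{regdecopotgenlip2} fit Definition \ref{def:potdef} with compact embedding into the ambient space provided by Rellich. Finally, when $\om$ is additionally topologically trivial, the exactness $R(\symnac) = N(\RotRotcst)$ and $R(\RotRotcst) = N(\Divcs)$ (and the dual identities) is precisely Theorem \ref{maintheo-ela1}. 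The only genuine obstacle I anticipate is a bookkeeping one: one must carefully check that the mixed regularity indices $(m,m-1)$ and $(n,n-1)$ match across the arrows of the second diagram, so that the domain-of-definition intersections in Theorem \ref{cptembela1}/\ref{cptembela2} really feed the abstract FA-ToolBox statement at every interior position; no new analytic ingredient is needed.
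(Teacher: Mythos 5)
Your synthesis is correct and mirrors the paper's implicit proof: the paper simply prefixes the statement with ``Theorem~\ref{cptembela1} and Theorem~\ref{cptembela2} show:'' and leaves the assembly to the reader, which is exactly what you carry out. The complex property, the reduction of the closed-range and finite-cohomology claims to the compact embeddings via the FA-ToolBox, the identification of the decomposition and potential operators from Theorem~\ref{regdecopotgenlip} and Corollary~\ref{regdecopotgenlip2} via Definitions~\ref{def:decompdef} and~\ref{def:potdef}, and the invocation of Theorem~\ref{maintheo-ela1} for exactness in the topologically trivial case are all the right ingredients.

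One small imprecision worth flagging: your parenthetical suggesting that Remark~\ref{rem:regdecoformulaela1} supplies a regular decomposition for the $\tHc{m}{\bbS}(\RotRott,\om)$ position is only valid for $m\ge 1$, and the paper explicitly states that a regular decomposition for $\tHc{}{\bbS}(\RotRott,\om)=\tHc{0}{\bbS}(\RotRott,\om)$ is \emph{not} available from the present results and is deferred to a forthcoming publication. This does not affect the conclusion, because the compact embedding at that position for $m=0$ is obtained in Theorem~\ref{cptembela1}(ii) via Corollary~\ref{cptembmaintheodual} (with $\Azs=\Divs$), i.e.\ from the regular decomposition of the dual space $\tH{}{\bbS}(\Div,\om)$ rather than from a decomposition of $\tHc{}{\bbS}(\RotRott,\om)$ itself; and the claim in Theorem~\ref{cptembela3} about associated decomposition and potential operators refers only to those operators that actually appear in Theorem~\ref{regdecopotgenlip} and Corollary~\ref{regdecopotgenlip2}. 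With that one caveat, the argument is sound and matches the paper.
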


Based on Theorem \ref{cptembela1}, the FA-ToolBox 
(Section \ref{sec:FAToolBox} and \cite{paulymaxconst2,P2019b,P2019a,paulyapostfirstordergen,paulyzulehner2019a}) 
immediately provides a long list of implications 
such as geometric Helmholtz type decompositions, regular decompositions, Friedrichs/Poincar\'e type estimates, 
closed ranges, see \cite{paulyzulehner2020aarxiv} for a comprehensive list of such results.

Finally, we want to highlight some of these important and useful implications.

\begin{theo}[Mini FA-ToolBox for the elasticity complex]
\label{minitbtheo1}
It holds:
\begin{itemize}
\item[\bf(i)]
All ranges of the elasticity complexes are closed.
\item[\bf(ii)]
The cohomology groups of generalised Dirichlet and Neumann tensors
$$\tHarm{}{\bbS,D}(\om)
=N(\RotRotcst)\cap N(\Divs),\quad
\tHarm{}{\bbS,N}(\om)
=N(\Divcs)\cap N(\RotRotst)$$
are finite-dimensional.
\item[\bf(iii)]
The geometric Helmholtz type decompositions 
\begin{align*}
\tL{2}{\bbS}(\om)
&=R(\symnac)\oplus_{\tL{2}{\bbS}(\om)}\tHarm{}{\bbS,D}(\om)\oplus_{\tL{2}{\bbS}(\om)}R(\RotRotst),\\
\tL{2}{\bbS}(\om)
&=R(\RotRotcst)\oplus_{\tL{2}{\bbS}(\om)}\tHarm{}{\bbS,N}(\om)\oplus_{\tL{2}{\bbS}(\om)}R(\symna)
\end{align*}
hold.
\item[\bf(iv)]
There exist (optimal) $c_{0},c_{1},c_{2}>0$ such that
the Friedrichs/Poincar\'e type estimates 
\begin{align*}
\forall\,\vv
&\in\vHc{1}{}(\om)
&
\norm{\vv}_{\vL{2}{}(\om)}
&\leq c_{0}\norm{\symna\vv}_{\tL{2}{\bbS}(\om)},\\
\forall\,\tN
&\in\tH{}{\bbS}(\Div,\om) \cap R(\symnac)
&
\norm{\tN}_{\tL{2}{\bbS}(\om)}
&\leq c_{0}\norm{\Div\tN}_{\vL{2}{}(\om)},\\
\forall\,\tM
&\in\tHc{}{\bbS}(\RotRott,\om)\cap R(\RotRotst)
&
\norm{\tM}_{\tL{2}{\bbS}(\om)}
&\leq c_{1}\norm{\RotRott\tM}_{\tL{2}{\bbS}(\om)},\\
\forall\,\tM
&\in\tH{}{\bbS}(\RotRott,\om)\cap R(\RotRotcst)
&
\norm{\tM}_{\tL{2}{\bbS}(\om)}
&\leq c_{1}\norm{\RotRott\tM}_{\tL{2}{\bbS}(\om)},\\
\forall\,\tN
&\in\tHc{}{\bbS}(\Div,\om) \cap R(\symna)
&
\norm{\tN}_{\tL{2}{\bbS}(\om)}
&\leq c_{2}\norm{\Div\tN}_{\vL{2}{}(\om)},\\
\forall\,\vv
&\in\vH{1}{\bot_{\RM}}(\om)
&
\norm{\vv}_{\vL{2}{}(\om)}
&\leq c_{2}\norm{\symna\vv}_{\tL{2}{\bbS}(\om)}
\end{align*}
hold.
\end{itemize}
\end{theo}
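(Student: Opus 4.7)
The plan is to derive all four items as purely abstract consequences of the compact embeddings in Theorem \ref{cptembela1}, via the FA-ToolBox machinery summarized in Subsection \ref{sec:FAToolBox} and \cite{paulymaxconst2,P2019b,P2019a,paulyapostfirstordergen,paulyzulehner2019a}. The guiding principle is that in any Hilbert complex with $D(\Ao)\cap D(\Azs)\cpt\H{}{1}$, closed ranges, finite-dimensional cohomology, orthogonal Helmholtz decompositions, and Friedrichs/Poincar\'e estimates on reduced domains all come for free. It therefore suffices to verify the compact embedding hypothesis at each position of the (extended) elasticity complex and then read off the conclusions.

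For (i) and (ii) I would walk through the positions of the complex one by one. At the two inner positions, the required compact embeddings $D(\Ao)\cap D(\Azs)\cpt\tL{2}{\bbS}(\om)$ are precisely the two statements of Theorem \ref{cptembela1}. At the two outer positions, the adjacent trivial operator $\iota_{\{\boldsymbol{0}\}}$ or $\pi_{\RM}$ imposes no domain constraint, so the required compactness reduces to Rellich's selection theorem $\vHc{1}{}(\om)\cpt\vL{2}{}(\om)$ or $\vH{1}{}(\om)\cpt\vL{2}{}(\om)$. Applying the abstract closed-range criterion at each position yields closed ranges for the three primal operators (and via the closed range theorem, also for their adjoints), proving (i). For (ii), the two cohomology spaces sit inside the compactly embedded intersections of Theorem \ref{cptembela1} as closed subspaces, so their unit balls are relatively compact in $\tL{2}{\bbS}(\om)$ and their dimensions are finite.

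For (iii), the standard Hodge-type decomposition for a Hilbert complex with closed ranges, namely $\H{}{1}=R(\Az)\oplus_{\H{}{1}}(N(\Ao)\cap N(\Azs))\oplus_{\H{}{1}}R(\Aos)$, specializes at the two middle positions of the elasticity complex to exactly the two stated three-term decompositions, with the middle summand identified as $\tHarm{}{\bbS,D}(\om)$ and $\tHarm{}{\bbS,N}(\om)$ respectively. For (iv), Remark \ref{rem:helmtypedeoc1} states that closed range of $\A$ is equivalent to a Friedrichs/Poincar\'e estimate on the reduced domain $D(\cA)=D(\A)\cap N(\A)^{\perp}$. The six estimates are then read off by identifying these six reduced domains: $N(\symnac)=\{\boldsymbol{0}\}$ (the only rigid motion with vanishing trace is zero) collapses the first reduced domain to all of $\vHc{1}{}(\om)$, and $N(\symna)=\RM$ yields $\vH{1}{\bot_{\RM}}(\om)$; the remaining four take the form $D(\A)\cap R(\A^*)$, for example $\tH{}{\bbS}(\Div,\om)\cap R(\symnac)$. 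The reuse of a single constant $c_{l}$ per primal-dual pair reflects the identity $\norm{\cA^{-1}}=\norm{(\cA^*)^{-1}}$ that accompanies the closed range theorem. The substantive work, the compact embedding Theorem \ref{cptembela1}, is already in hand; no genuine obstacle remains, only careful bookkeeping of which operator and adjoint occupy each position.
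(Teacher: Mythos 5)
Your proposal is correct and matches the paper's own reasoning: the paper states Theorem~\ref{minitbtheo1} without a separate written proof, noting immediately before it that ``Based on Theorem~\ref{cptembela1}, the FA-ToolBox\dots immediately provides a long list of implications such as geometric Helmholtz type decompositions, regular decompositions, Friedrichs/Poincar\'e type estimates, closed ranges,'' which is exactly the abstract route you take. Your bookkeeping of the reduced domains (identifying $N(\symnac)=\{\boldsymbol{0}\}$, $N(\symna)=\RM$, and the four intersections $D(\A)\cap R(\A^{*})$), of the Hodge-type three-term decompositions at the two inner positions, and of the shared constants $c_{\ell}=\norm{\cA_{\ell}^{-1}}=\norm{(\cA_{\ell}^{*})^{-1}}$ is accurate; the only slightly superfluous step is invoking Rellich at the outer positions, since the compact embeddings at the two inner positions already yield closed ranges of all six operators.
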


\begin{rem}[Dirichlet and Neumann tensors]
\label{re:dirneuela}
As shown in \cite{PW2020a},
the dimensions of the Dirichlet and Neumann tensor fields of the elasticity complexes are given by
multiples of the corresponding Dirichlet and Neumann vector fields 
of the de Rham complexes, respectively. More precisely,
$$\dim\tHarm{}{\bbS,D}(\om)=\dim\RM\cdot\dim\vHarm{}{D}(\om),\quad
\dim\tHarm{}{\bbS,N}(\om)=\dim\RM\cdot\dim\vHarm{}{N}(\om),$$
where $\dim\RM=6$,
$\dim\vHarm{}{D}(\om)+1$ is the number of connected components of $\reals^{3}\setminus\ol{\om}$
resp. $\ga$, and $\dim\vHarm{}{N}(\om)$ is the number of ``handles'' of $\om$, 
see \cite{picardboundaryelectro,PW2020a} for exact definitions.
See also \cite{ciarletciarletgeymonatkrasucki2007}
for $\dim\tHarm{}{\bbS,N}(\om)$.
\end{rem}

\bibliographystyle{plain} 
\bibliography{/Users/paule/GoogleDriveData/Tex/input/bibTex/paule-walter}

\appendix

\section{Some Formulas}
\label{app:Formulas}

\begin{lem}
\label{appformulasproof}
For smooth functions, vector fields, and tensor fields we have
\begin{itemize}
\item[\bf(i)]
$\Rot(u\,\tI)=-\spn\na u$,
\item[\bf(ii)]
$\tr\Rot\tM=2\div\spn^{-1}\skw\tM$,
\item[\bf(ii')]
$\tr\Rot\tM=0$, if $\tM$ is symmetric,
\item[\bf(iii)]
$\Div\spn\vv=-\rot \vv$,
\item[\bf(iii')]
$\Div\skw\tM=-\rot \vv$ for $\vv=\spn^{-1}\skw\tM$,
\item[\bf(iv)]
$\Rot\spn\vv=(\div\vv)\,\tI-(\na\vv)^{\top}$,
\item[\bf(iv')]
$\Rot\skw\tM=(\div\vv)\,\tI-(\na\vv)^{\top}$ 
for $\vv=\spn^{-1}\skw\tM$,
\item[\bf(v)]
$\skw\Rot\tM=\spn\vv$
with $2\vv=\Div\tM^{\top}-\na\tr\tM$,
\item[\bf(v')]
$2\Div\sym\Rot\tM=-2\Div\skw\Rot\tM=\rot\Div\tM^{\top}$,
\item[\bf(vi)]
$\skw\RotRott \tM = \RotRott \skw\tM$,
\item[\bf(vi')]
$\sym\RotRott \tM = \RotRott \sym\tM$.
\end{itemize}
These formulas hold for distributions as well.
\end{lem}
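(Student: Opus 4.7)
The plan is to verify each identity for smooth fields by a short computation in index notation and then extend to distributions by duality. Throughout I use the conventions read off from the definitions in Section~\ref{sec:Not}: $(\spn\vv)_{ij}=-\epsilon_{ijk}v_{k}$, $(\Rot\tM)_{ij}=\epsilon_{jkl}\partial_{k}M_{il}$, $(\Div\tM)_{i}=\partial_{j}M_{ij}$, together with the standard contraction identity $\epsilon_{iab}\epsilon_{icd}=\delta_{ac}\delta_{bd}-\delta_{ad}\delta_{bc}$ and Einstein summation.

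Identity (i) follows by observing that the $i$-th row of $u\,\tI$ is $u\,e_{i}$, whose rotation is $\na u\times e_{i}$; assembling these rows gives $-\spn\na u$. For (ii), expand $\tr\Rot\tM=\epsilon_{ikl}\partial_{k}M_{il}$; the symmetric part of $\tM$ drops out by the $i\leftrightarrow l$ antisymmetry of $\epsilon_{ikl}$, and the skew part $(\skw\tM)_{il}=-\epsilon_{ilm}v_{m}$ with $\vv=\spn^{-1}\skw\tM$ produces $2\div\vv$ after contraction, which is (ii); (ii') is the special case $\skw\tM=0$. Identity (iii) is the single-line calculation $(\Div\spn\vv)_{i}=\partial_{j}(-\epsilon_{ijk}v_{k})=-(\rot\vv)_{i}$, and (iii') is immediate from $\skw\tM=\spn\vv$. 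For (iv), the computation $(\Rot\spn\vv)_{ij}=-\epsilon_{jkl}\epsilon_{ilm}\partial_{k}v_{m}$ reduces via the contraction identity to $(\div\vv)\delta_{ij}-\partial_{i}v_{j}$, i.e., $(\div\vv)\tI-(\na\vv)^{\top}$; (iv') is then immediate.

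For (v), extract the skew part of $\Rot\tM$ by computing
\[
2v_{k}=\epsilon_{kij}(\Rot\tM)_{ij}=\epsilon_{kij}\epsilon_{jlm}\partial_{l}M_{im}=\partial_{l}M_{lk}-\partial_{k}M_{ll},
\]
which reads $2\vv=\Div\tM^{\top}-\na\tr\tM$. Then (v') follows by taking $\Div$ of (v), using (iii') to convert $\Div\skw$ to $-\rot\spn^{-1}\skw$, and $\rot\na=0$ applied to the scalar $\tr\tM$. The remaining parts (vi) and (vi') are equivalent since $\sym+\skw=\id$, so it suffices to show that $\RotRott\sym\tM$ is symmetric. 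With $\tN:=\sym\tM$, a direct computation yields
\[
(\RotRott\tN)_{ij}=\epsilon_{jmn}\epsilon_{ikl}\,\partial_{m}\partial_{k}N_{ln},
\]
and swapping $i\leftrightarrow j$, relabelling the summation indices, and using $N_{ln}=N_{nl}$ shows $(\RotRott\tN)_{ij}=(\RotRott\tN)_{ji}$. Equivalently, one checks directly that $\RotRott\spn\vv$ is skew by combining (iv), (i), and the row-wise identity $\Rot\na\vv=0$ to obtain $\RotRott\spn\vv=-\spn\na\div\vv$.

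Finally, all identities extend to distributions by testing against $\tCc{\infty}{}(\om)$ and invoking the distributional definitions of the operators. The computations are essentially bookkeeping; the one step that requires mild care is (vi)/(vi'), where the symmetry of $\RotRott\sym\tM$ must be teased out of a double $\epsilon$-symbol using the symmetry of $\tN$ in its two repeated arguments, and this is the only place where the argument is not a one-line manipulation.
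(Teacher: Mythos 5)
Your proposal is correct but follows a more self-contained route than the paper. For parts (i)--(v') the paper simply cites the earlier lemma in \cite{paulyzulehner2019a}, whereas you re-derive everything by index manipulations; this is fine and probably helpful to a reader who does not have that reference at hand. For (vi)/(vi') the paper argues purely from (i)--(v'): using (iv), (i), and $\Rot\na=0$ it computes $\RotRott\skw\tM=-\spn\na\div\vv$ with $\vv=\spn^{-1}\skw\tM$, and independently, using (v), (ii), and $\Div\Rot=0$, it computes $\skw\RotRott\tM=-\spn\na\div\vv$, whence (vi); your primary argument is instead the direct index identity for $\RotRott$, which you supplement with the skewness of $\RotRott\spn\vv$ (this last item is essentially half of the paper's argument). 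Both routes are valid; yours is shorter once one accepts raw index bookkeeping, while the paper's has the advantage of being formula-free and reusing the already stated identities.

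Two small points deserve attention. First, the sentence \emph{``so it suffices to show that $\RotRott\sym\tM$ is symmetric''} is an over-claim as stated: knowing only that $\RotRott$ maps $\bbS$ to $\bbS$ gives $\sym\RotRott\tM=\RotRott\sym\tM+\sym\RotRott\skw\tM$, so one still needs $\RotRott\skw\tM$ to be skew. The index computation you carry out actually establishes the stronger transpose-equivariance $(\RotRott\tM)^{\top}=\RotRott(\tM^{\top})$ (the relabelling step is independent of any symmetry of $\tM$), which \emph{does} imply both (vi) and (vi') at once; alternatively, your separate verification $\RotRott\spn\vv=-\spn\na\div\vv$ supplies the missing skew half. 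It would be worth rephrasing so that the logic is airtight — the word ``Equivalently'' suggests the two checks are interchangeable, whereas they are in fact complementary unless one invokes the full transpose-equivariance. Second, in (v) there are two compensating sign slips: with the convention $(\spn\vv)_{ij}=-\epsilon_{ijk}v_k$ one gets $2v_k=-\epsilon_{kij}(\Rot\tM)_{ij}$ (not $+$), and the contraction $\epsilon_{kij}\epsilon_{jlm}\partial_l M_{im}$ evaluates to $\partial_k M_{ll}-\partial_l M_{lk}$ (not $\partial_l M_{lk}-\partial_k M_{ll}$); the two errors cancel and the final identity $2\vv=\Div\tM^{\top}-\na\tr\tM$ is correct, but the intermediate steps as written are inconsistent.
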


\begin{proof}
For (i)--(v') see \cite[Lemma A.1]{paulyzulehner2019a}. 
We show (vi), (vi'). 
For $\tM$ and $\vv=\spn^{-1}\skw\tM$
it follows from (iv) and (i) that
$$\Rot(\Rot\skw\tM)^{\top}
=\Rot\big((\div\vv)\tI\big)
=-\spn\na\div\vv.$$
Conversely, (v) and (ii) imply
$$2\skw\Rot(\Rot \tM)^\top
=\spn(\Div\Rot\tM-\na\tr\Rot\tM)
=-2\spn\na\div\vv,$$
showing (vi). (vi') easily follows by
\begin{align*}
\sym\RotRott\tM 
&=\RotRott\tM-\skw\RotRott\tM\\
&=\RotRott\tM-\RotRott\skw\tM
=\RotRott\sym\tM,
\end{align*}
completing the proof.
\end{proof}

\vspace*{5mm}\hrule\vspace*{3mm}

\end{document}